\numberwithin{equation}{section}
\newtheorem{theorem}{Theorem}[section]
\newtheorem{lemma}[theorem]{Lemma}
\newtheorem{claim}[theorem]{Claim}
\newtheorem{corollary}[theorem]{Corollary}
\newtheorem{conjecture}[theorem]{Conjecture}
\theoremstyle{definition}
\newtheorem{definition}[theorem]{Definition}
\def\cqedsymbol{\ifmmode$\lrcorner$\else{\unskip\nobreak\hfil
\penalty50\hskip1em\null\nobreak\hfil$\lrcorner$
\parfillskip=0pt\finalhyphendemerits=0\endgraf}\fi}
\newcommand{\hook}[1]{\ensuremath{{H}_{#1}}}
\newcommand{\dhook}[1]{\hook{#1}^2}
\newcommand{\dhookfam}[1]{\ensuremath{\cH^2_{\geq #1}}}
\renewcommand{\ge}{\geqslant} 
\newcommand{\se}{\subseteq}
\newcommand{\sm}{\setminus}
\newcommand{\Exp}{\mathbf{E}}
\newcommand{\Prob}{\mathbf{Pr}}
\newcommand{\bheps}{\delta}
\newcommand{\eps}{\varepsilon}
\newcommand{\degeps}{\varepsilon}
\newcommand{\acteps}{\gamma}
\newcommand{\cF}{\ensuremath{\mathcal{F}}}
\newcommand{\cG}{\ensuremath{\mathcal{G}}}
\newcommand{\cH}{\ensuremath{\mathcal{H}}}
\newcommand{\cS}{\mathcal{S}}
\definecolor{myred}{RGB}{220,24,10}
\newcommand{\EH}{Erd\H{o}s-Hajnal }
\newcommand{\compl}{{\mathrm{c}}}
\newcommand{\ABsym}{\ensuremath{A \leftrightarrow B}}
\newcommand{\reach}[2]{\mathtt{reach}(#1 \to #2)}
\tikzstyle{every node}=[circle, draw, fill=black!50, inner sep=0pt, minimum width=20pt]
\tikzstyle{input}=[circle,
\tikzstyle{input2}=[circle,
\tikzstyle{matrx}=[rectangle,
\tikzstyle{matrx2}=[rectangle,
\tikzstyle{vecArrow} = [thick, decoration={markings,mark=at position
\tikzstyle{innerWhite} = [semithick, white,line width=1.4pt, shorten >= 4.5pt]
\tikzstyle{background}=[rectangle,
\title{Excluding hooks and their complements}
\author{%
Krzysztof Choromanski\thanks{Google Research New York, 76 Ninth Ave, New York, NY 10011, USA, kchoro@google.com.}
\and
Dvir Falik\thanks{School of Mathematical Sciences,
Queen Mary - University of London,
Mile End Road,
London, E1 4NS, UK, \texttt{dvir.falik@gmail.com}. Research supported by the Warwick-QMUL Alliance in Advances in Discrete Mathematics and its Applications.}
\and
Anita Liebenau\thanks{Department of Computer Science and DIMAP, University of Warwick, Coventry CV4 7AL, UK, \texttt{a.liebenau@warwick.ac.uk}. Research supported by the European Research Council under the European Union�s Seventh
Framework Programme (FP7/2007- 2013)/ERC grant agreement no.~259385.}
\and
Viresh Patel\thanks{Korteweg-de Vries Institute for Mathematics, University of Amsterdam, Science Park 904,
1098 XH,  Amsterdam, 
The Netherlands, \texttt{vpatel@uva.nl}. Research partially supported by the Warwick-QMUL Alliance in Advances in Discrete Mathematics and its Applications and by the Netherlands Organisation for Scientific Research (NWO) through the Gravitation Programme Networks (024.002.003).}
\and
Marcin Pilipczuk\thanks{Department of Computer Science and DIMAP, University of Warwick, Coventry CV4 7AL, UK, \texttt{m.pilipczuk@warwick.ac.uk}. Research partially supported by the Centre for Discrete Mathematics and its Applications (DIMAP) at the University of Warwick and by the Warwick-QMUL Alliance in Advances in Discrete Mathematics and its Applications.}}
\date{}
\begin{document}

\maketitle

\begin{abstract}
The celebrated \EH conjecture states that for every $n$-vertex undirected graph $H$ there exists $\eps(H)>0$ such that every graph $G$ that does not contain $H$ as an induced subgraph contains a clique or an independent set of size at least $n^{\eps(H)}$.
A weaker version of the conjecture states that the polynomial-size clique/independent set phenomenon occurs if one excludes both $H$ and its complement $H^{\compl}$.
We show that the weaker conjecture holds if $H$ is any path with a pendant edge at its third vertex; thus we give a new infinite family of graphs for which the conjecture holds. 
\end{abstract}

\section{Introduction}

The Erd\H{o}s-Hajnal conjecture is a long-standing and intensely-studied conjecture in Ramsey Theory bringing together extremal, structural and probabilistic aspects of graph theory. Informally it says that if a large graph $G$ does not contain a fixed graph $H$ as an induced subgraph then $G$ contains a large clique or independent set.

More formally, a set of vertices in a graph $G$  is {\em homogeneous} if it induces a clique or independent set, and we denote the largest homogeneous set of $G$ by $\hom(G)$. Given a graph $H$ (resp.\ a family of graphs $\cF=\{H_1, H_2, \ldots\}$), a graph $G$ is said to be $H$-free (resp.\ $\cF$-free) if $G$ does not contain $H$ (resp.\ any member of $\cF$) as an induced subgraph. The famous Erd\H{o}s-Hajnal conjecture \cite{eh1977} is the following.

\begin{conjecture}\label{conj:EH}
For every graph $H$, there exists $c(H) > 0$ such that if $G$ is an $n$-vertex $H$-free graph then $\hom(G)\ge n^{c(H)}$. 
\end{conjecture}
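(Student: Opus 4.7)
The plan is to proceed by strong induction on $|V(H)|$, reducing to the \emph{prime} case via the substitution-closure theorem of Alon, Pach and Solymosi: if $H$ is obtained by substituting one graph into a vertex of another, and each factor satisfies Conjecture~\ref{conj:EH}, then so does $H$. Combined with the trivial base case $|V(H)| \le 4$, this leaves only prime graphs to handle, so the entire conjecture reduces to proving, for every prime $H$, the existence of a polynomial-size homogeneous set in any $H$-free graph.

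For a prime $H$, I would aim at the following stronger density version: every $H$-free $n$-vertex graph $G$ contains an induced subgraph on at least $n^{\bheps(H)}$ vertices of edge density either at most $n^{-\bheps(H)}$ or at least $1-n^{-\bheps(H)}$. This is the so-called \emph{polynomial R\"odl property}; once it is in hand, a standard iterative application combined with Ramsey's theorem delivers $\hom(G) \ge n^{c(H)}$. To establish polynomial R\"odl I would combine a weak regularity partition of $G$ with a pigeonhole argument on the finitely many cluster-pair types: primality of $H$ forces the reduced graph of any $H$-free $G$ to avoid a rich family of configurations, and a Ramsey-type/density-increment argument inside the reduced graph should locate a large, nearly homogeneous collection of clusters. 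Within such a collection one either extracts the required sparse (or dense) induced subgraph, or uncovers a ``near-module'' that enables recursion to a proper induced subgraph of $H$, closing the induction via the substitution closure.

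The main obstacle, and indeed the crux of the whole conjecture, is precisely this polynomial R\"odl step. The unconditional R\"odl theorem provides only density $o(1)$, and strengthening ``$o(1)$'' to ``$n^{-\bheps}$'' is open even for $H=C_5$; all partial successes (bulls by Chudnovsky-Safra, the hook family handled in the present paper, and a handful of other specific graphs) bypass polynomial R\"odl by exploiting tailored structural features of $H$. I therefore expect a full proof to require either a substantially broader closure operation than substitution, or a genuinely new density-increment mechanism that leverages global Ramsey constraints on the reduced graph rather than case-by-case structural analysis of the excluded $H$.
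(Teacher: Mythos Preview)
The statement you are attempting to prove is Conjecture~\ref{conj:EH}, the Erd\H{o}s--Hajnal conjecture itself. The paper does \emph{not} prove this statement; it is a famous open problem. The paper only establishes the weaker Conjecture~\ref{conj:EHLight} for the specific family of hooks (Theorem~\ref{thm:EH-hooks}), via the strong Erd\H{o}s--Hajnal property for double-hook-free graphs. There is therefore no ``paper's own proof'' to compare against.

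Your proposal is not a proof but a strategy sketch, and you correctly identify its genuine gap yourself: the polynomial R\"odl step. The reduction to prime $H$ via the Alon--Pach--Solymosi substitution theorem is valid and well known, and the implication ``polynomial R\"odl $\Rightarrow$ Erd\H{o}s--Hajnal'' is also standard. But the polynomial R\"odl property is open for every prime $H$ on at least five vertices for which the conjecture is open, including $P_5$ and $C_5$; the R\"odl/Fox--Sudakov theorem (Theorem~\ref{thm:reg} in the paper) gives only a linear-size subgraph of \emph{constant} density $\degeps$, not density $n^{-\bheps}$. Your suggested route through weak regularity and a density increment on the reduced graph is plausible-sounding but has no known instantiation that produces a polynomial gain; indeed, all regularity-based arguments to date lose at least a logarithmic factor at each step, which is why the current record for general $H$ is the Erd\H{o}s--Hajnal bound $e^{c\sqrt{\ln n}}$. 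So the induction does not close, and what you have written is an accurate summary of one line of attack on an open problem rather than a proof.
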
  

The Erd\H{o}s-Szekeres bounds on Ramsey numbers \cite{es1935} imply that $\hom(G) > \frac{1}{2}\log_2 n$ for any $n$-vertex graph $G$. Furthermore, the lower bounds on Ramsey numbers found by Erd\H{o}s \cite{e1947} using the probabilistic method imply that for a uniformly random $n$-vertex graph, $\hom(G) < O(\log n)$ with high probability. Thus the Erd\H{o}s-Hajnal conjecture suggests that $H$-free graphs are quite different from typical (random) graphs with respect to homogeneous sets. The best known bound for the above conjecture is due to 
Erd\H{o}s and Hajnal~\cite{eh1989}: they showed that the conjecture above holds if we replace $n^{c(H)}$ with $e^{c(H)\sqrt{\ln n}}$.

Despite much attention, the conjecture is known to hold for only a limited choice of $H$. We give some brief background here and refer the interested reader to the excellent survey \cite{c2014} by Chudnovsky on this fascinating conjecture.   
The upper bounds on Ramsey numbers due to Erd\H{o}s and Szekeres \cite{es1935} 
immediately imply that $K_k$, the clique on $k$ vertices, satisfies the \EH conjecture. 
%
In \cite{aps2001}, Alon, Pach and Solymosi prove the \EH conjecture for a (non-trivial)   
infinite family of graphs by showing that if the conjecture is true 
for two graphs $H_1$ and $H_2$, then it is also true for the graph formed by blowing up a vertex of $H_1$ and inducing a copy of $H_2$ amongst the new vertices.
It is known that the \EH conjecture holds for all graphs on at most four vertices and using the result in \cite{aps2001} immediately implies that the \EH conjecture holds for all graphs on five vertices except the four-edge path, its complement, the 
cycle on five vertices, and a graph commonly called the {\em bull} 
(a triangle with two pendant edges). 
Chudnovsky and Safra \cite{cs2008} settled the conjecture for the bull, but the question remains unsolved for the other three graphs on five vertices. 


Rather than considering $H$-free graphs, 
one can weaken the \EH conjecture by considering hereditary graph classes (i.e.\ graph classes that are closed under taking induced subgraphs). In this direction the following conjecture was proposed, see e.g.~\cite{c2014}, \cite{g1997}. We denote by $H^{\compl}$ the complement of a graph $H$. 
\begin{conjecture}\label{conj:EHLight}
For every graph $H$, there exists a constant $c(H)> 0$ such that if $G$ is an $n$-vertex $\{H,H^{\compl}\}$-free 
graph then $\hom(G) \ge n^{c(H)}$. 
\end{conjecture}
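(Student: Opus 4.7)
Any attempt at the full conjecture must confront the fact that essentially no techniques are known that apply uniformly to all graphs $H$, so my plan is to reduce the problem to a more restricted but still infinite class of ``atomic'' graphs and then attack those atoms one family at a time. I would set up an induction on $|V(H)|$, strengthened to a form compatible with graph substitutions.

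The main inductive tool would be a $\{H,H^{\compl}\}$-analogue of the Alon--Pach--Solymosi blow-up lemma. Since complementation commutes with vertex substitution (the complement of a blow-up is the blow-up of the complement), knowing the polynomial homogeneous-set bound for $\{H_i,H_i^{\compl}\}$-free graphs for $i=1,2$ should imply the same bound when $H$ is obtained by substituting $H_2$ into a vertex of $H_1$. The argument would show that an $n$-vertex $\{H,H^{\compl}\}$-free graph $G$ either already contains a large homogeneous set, or else contains many disjoint induced copies of $H_2$, whose attachment pattern to the rest of $G$ is then constrained enough to force either a large homogeneous set or a forbidden induced $H$ or $H^{\compl}$. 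Iterating this reduces the conjecture to graphs $H$ that are prime with respect to modular decomposition.

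For prime $H$ the plan is to prove a structure theorem: after restricting to a sufficiently large induced subgraph, any $\{H,H^{\compl}\}$-free graph admits a ``pseudo-split'' partition of its vertex set in which each bipartite piece is either almost complete or almost empty; treating the residual error graph recursively then yields a polynomial-size homogeneous set. The hook family addressed in this paper is exactly this kind of structure theorem specialised to one particular prime $H$, and should serve as a blueprint for other prime families.

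The main obstacle, and the reason this conjecture has stood unresolved since the 1970s, is that no uniform structural mechanism is known for producing such a decomposition from merely forbidding a general prime $H$ and its complement; each prime case seems to require a bespoke argument exploiting the specific shape of $H$. The smallest open primes, such as $P_5$ and $C_5$, have resisted decades of effort even in the stronger single-graph formulation. A realistic proof plan must therefore proceed family by family, and producing any ``new infinite family'' for which the conjecture holds, as is done here for hooks, is hard-won progress rather than a corollary of a general principle.
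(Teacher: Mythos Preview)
The statement you are addressing is a \emph{conjecture}, not a theorem: the paper does not prove Conjecture~\ref{conj:EHLight} in general, and indeed it remains open. The paper's contribution is to verify the conjecture for a new infinite family (the $k$-hooks, Theorem~\ref{thm:EH-hooks}), deduced from the strong Erd\H{o}s-Hajnal property for $\dhookfam{k}$-free graphs (Theorem~\ref{thm:main-hook}). There is therefore no ``paper's own proof'' of the full conjecture to compare against.

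Your proposal is not a proof but a research programme, and you say as much yourself: you correctly observe that the substitution/blow-up reduction handles composite graphs, that the difficulty concentrates on prime $H$, and that ``each prime case seems to require a bespoke argument.'' That last sentence is exactly the gap. Everything prior to it is standard folklore (the Alon--Pach--Solymosi substitution argument does adapt to the $\{H,H^{\compl}\}$ setting, and reducing to prime graphs is the natural first move), but the heart of the problem---handling an arbitrary prime $H$---is left entirely unaddressed beyond the hope for a ``pseudo-split'' structure theorem. No such theorem is known, and your proposal gives no mechanism for producing one; even the smallest prime cases ($P_5$, $C_5$) remain open. So what you have written is an accurate summary of the state of the art and of where the obstruction lies, but it is not a proof, nor a sketch that could be completed into one with routine work.
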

Compared to the \EH conjecture, this conjecture is known to hold for only a few additional choices of $H$. Let $P_k$ be the path on $k$ vertices. Chudnovsky and Seymour \cite{cs2012} proved Conjecture~\ref{conj:EHLight} when $H$ is  $P_6$. Recently, Bousquet, Lagoutte and Thomass\'e \cite{blt2015} generalised this result and showed that, for every $k$, the conjecture above holds when $H$ is taken to be $P_k$. Our main contribution is to generalise this result from $k$-paths to what we call $k$-hooks, giving a new infinite family of $H$ for which Conjecture~\ref{conj:EHLight} holds.

A \emph{$k$-hook}, denoted by $\hook{k}$, is the graph on $k+4$ vertices  
$\{v_1,\ldots,v_{k+4}\}$, where $\{v_1,\ldots,v_{k+3}\}$ form a $(k+3)$-vertex path, and $v_{k+1}v_{k+4}$ 
is a pendant edge. 
An illustration can be found in Figure~\ref{fig:Picture1}. 
We call this graph a $k$-hook (rather than, say, a $(k+3)$-hook), 
since it is more convenient to treat it like a $k$-vertex path with a {\em hook}, i.e.~a four-vertex path, attached to it. 
\begin{theorem}\label{thm:EH-hooks}
For every $k\geq 1$ there exists $c_k$ such that if $G$ is an $n$-vertex $\{\hook{k},\hook{k}^\compl\}$-free graph then $\hom(G)>n^{c_k}$. 
\end{theorem}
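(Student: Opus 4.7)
The plan is to reduce the theorem to the Bousquet–Lagoutte–Thomassé (BLT) result for paths by exploiting the specific shape of $H_k$. The first observation is that $P_{k+3}$ is an induced subgraph of $H_k$ (delete the pendant vertex $v_{k+4}$), so every $P_{k+3}$-free graph is $H_k$-free, and by complementation the class of $\{H_k, H_k^\compl\}$-free graphs strictly contains the class of $\{P_{k+3}, P_{k+3}^\compl\}$-free graphs. Hence if the input graph $G$ happens to be $\{P_{k+3}, P_{k+3}^\compl\}$-free then BLT immediately yields $\hom(G) \geq n^{c}$ for a constant $c>0$ depending only on $k$. The essential case is therefore the one in which $G$ contains an induced $P_{k+3}$ or an induced $P_{k+3}^\compl$; since the class of $\{H_k, H_k^\compl\}$-free graphs is self-dual under complementation, we may assume the former and fix an induced path $P = v_1 v_2 \cdots v_{k+3}$ in $G$.

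The key structural consequence of $H_k$-freeness is the following: for every vertex $u \in V(G) \setminus V(P)$, the adjacency pattern of $u$ to the path vertices cannot be \emph{exactly} $\{v_{k+1}\}$, because otherwise $V(P) \cup \{u\}$ induces $H_k$ with $u$ in the role of the pendant $v_{k+4}$. I would classify each external vertex by its \emph{type}, an element of $\{0,1\}^{k+3}$ encoding adjacencies to $v_1, \ldots, v_{k+3}$. At least one type is forbidden, and a pigeonhole argument produces a subset $W \subseteq V(G) \setminus V(P)$ of size $\Omega(n/2^{k+3})$ in which all vertices share a common feasible type $\tau^*$; within $G[W]$ every vertex has identical external behaviour towards $P$, and one can use this uniformity to probe the internal structure of $G[W]$.

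From here I envisage an iterative dichotomy. For each feasible $\tau^*$, one shows that either (a) some vertex of $W$ can be used to re-route the induced $P_{k+3}$ into a new induced $(k+3)$-path that passes through $W$, giving a strictly more constrained frame on which to repeat the pigeonholing with one extra forbidden type; or (b) the type $\tau^*$ directly forces a large complete or anticomplete pair between suitable subsets, e.g.\ when $\tau^*$ is the all-zero vector $W$ is anticomplete to $P$ and a recursion inside $G[W]$, or a direct appeal to BLT if $G[W]$ turns out to be $\{P_{k+3}, P_{k+3}^\compl\}$-free, delivers the desired structure. After a number of iterations bounded by a function of $k$, the process must bottom out, producing either a BLT-tractable subinstance or a polynomial-size complete/anticomplete pair, which the Alon–Pach–Solymosi blow-up machinery converts into a homogeneous set of size $n^{c_k}$. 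The principal obstacle will be the structural case analysis that is uniform in $\tau^*$ and $k$: for each feasible type one must explicitly exhibit either the path-rerouting in (a) or the homogeneous-pair construction in (b), while controlling the constant-factor losses at each level so that they accumulate to only a polynomial in $n$. I expect the placement of the pendant at the third vertex of the path to be precisely the feature that makes this uniform case analysis tractable, since it forces the offending vertex $v_{k+4}$ to attach at an internal path position where it interacts with both a long path segment and a short one.
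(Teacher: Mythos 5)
There is a genuine gap: the proposal contains only the easy half of the argument (if $G$ is $\{P_{k+3},P_{k+3}^{\compl}\}$-free, invoke Bousquet--Lagoutte--Thomass\'e) plus the trivial observation that a vertex attached to exactly $v_{k+1}$ of an induced $P_{k+3}$ completes a copy of $\hook{k}$. Everything that would constitute the actual proof is deferred to an ``iterative dichotomy'' with no working mechanism. Concretely: (i) your types are defined relative to one chosen path, and only \emph{one} of the $2^{k+3}$ types is forbidden; after ``re-routing'' to a new induced $(k+3)$-path the type space resets, so there is no reason the count of forbidden types increases, and hence no termination measure. (ii) In your case (b), the all-zero type gives an anticomplete pair $(V(P),W)$ in which one side has constant size $k+3$ --- useless for the strong or weak \EH property --- and ``recursing inside $G[W]$'' just hands you back an arbitrary $\{\hook{k},\hook{k}^{\compl}\}$-free graph on a constant fraction of the vertices, i.e.\ the original problem. (iii) You never perform the sparsification step (R\"odl / Fox--Sudakov) that both BLT and this paper rely on; without sublinear maximum degree none of the path-growing or pigeonholing arguments control anything, and in a dense graph your set $W$ of a fixed type can be empty or structureless. (iv) Alon--Pach--Solymosi is a substitution theorem about which graphs $H$ satisfy the conjecture; it is not a device for converting a polynomial-size complete/anticomplete pair into a homogeneous set (that conversion is a separate standard recursion, which you would still need to set up).

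The hard case your sketch never confronts is precisely the one the paper's machinery exists for: when no hook can be grown, one must extract structure from the failure. The paper does this by passing (after sparsification) to a separator-like partition $(A,B,S)$ with $|S|$ linear, analysing how the $A$- and $B$-neighbourhoods of vertices of $S$ interact, and showing that if no active hook and no linear anti-adjacent pair exist then a linear part of $S$ decomposes into modules whose quotient is a claw-free Berge graph --- which then requires a separate (weighted) strong-\EH theorem for claw-free Berge graphs, proved via the Chv\'atal--Sbihi and Maffray--Reed structure theorems. Moreover, the paper does not exclude $\hook{k}$ directly from one hook: it works with \emph{double} hooks, building two active hooks in disjoint connected regions and joining them by a shortest path, which is what makes the contradiction robust. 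None of this has an analogue in your outline, so as written the proposal is a plan to find a proof rather than a proof.
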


 \begin{figure}[b]
 \centering
\phantom{asdfasdfasdfaasdfasfasdfad}
\begin{subfigure}[b]{0.4\textwidth}
	\centering
	 \includegraphics[width=0.8\textwidth]
	 {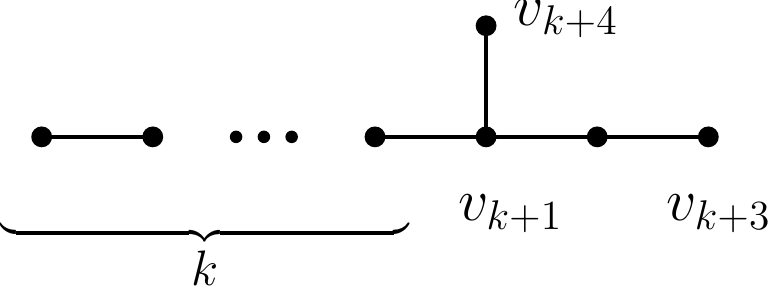}
	 \caption{A $k$-hook.}
	 \label{fig:Picture1}
\end{subfigure} 
\qquad
 \begin{subfigure}[b]{0.45\textwidth}
 	\centering
	\includegraphics[width=0.9\textwidth]
	{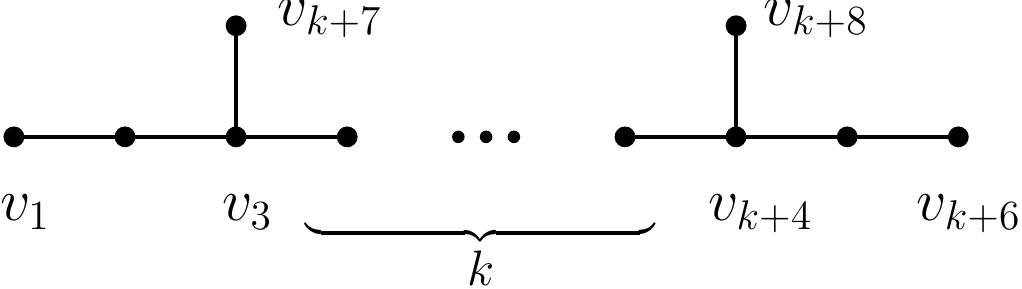}
	 \caption{A double $k$-hook.} 
	 \label{fig:Picture2}
 \end{subfigure}
 \caption{}
 \label{fig:Pictures1and2}
 \end{figure}

We now describe some further results.
A class $\cG$ of graphs is said to have the {\em (weak) \EH property} if there exists a constant $c>0$ such that every graph $G\in \cG$ satisfies $\hom(G)\ge n^c$ where $n$ is the number of vertices in $G$. Clearly, Conjecture~\ref{conj:EH} is equivalent to the statement  that, for every 
graph $H$, the class of $H$-free graphs satisfies the \EH property. 
Instead of asking for homogeneous sets, one can ask for homogeneous pairs: for a graph $G$ and two disjoint subsets of its vertices $P$ and $Q$, we say that $(P,Q)$ is a {\em homogeneous pair} if every edge between $P$ and $Q$ is present or if every edge between $P$ and $Q$ is absent. In the former case, we call $(P,Q)$ an {\em adjacent pair} and in the latter case an {\em anti-adjacent pair}. A graph class $\mathcal{G}$ has \emph{the strong Erd\H{o}s-Hajnal property} if there exists a constant $\bheps > 0$
such that every $G \in \mathcal{G}$ with at least two vertices has a homogeneous pair $(P,Q)$ with $|P|,|Q| \geq \bheps |V(G)|$. It is not hard to show (see, e.g., \cite{apprs2005}, \cite{fp2008}) that if a graph class $\cG$ has the strong \EH property, 
then it also has the (weak) \EH property.

We shall prove Theorem~\ref{thm:EH-hooks} by proving that a more general graph class has the strong \EH property.
A {\em double $k$-hook}, denoted by $\dhook{k}$, is the graph 
on $k+8$ vertices $\{v_1,\ldots,v_{k+8}\}$, where the vertices $\{v_1,\ldots,v_{k+6}\}$ 
form a $(k+6)$-vertex path, and $v_3v_{k+7}$ and $v_{k+4},v_{k+8}$ are 
pendant edges.  
Again, we prefer to view this graph as a $k$-vertex path, with a hook attached to each end of the path: 
hence the name. 
For an illustration, see Figure~\ref{fig:Picture2}. 
Let $\dhookfam{k}:= \{\dhook{\ell}, (\dhook{\ell})^{\compl}: \ell \geq k\}$
i.e.\ the set of double $\ell$-hooks and their complements for all $\ell \geq k$. 
Since the class of $\{\hook{k},\hook{k}^\compl\}$-free graphs is a subclass of $\dhookfam{k}$-free graphs, 
Theorem \ref{thm:EH-hooks} is implied by the following. 

\begin{theorem}\label{thm:main-hook}
For every $k \geq 1$, the class of $\dhookfam{k}$-free
graphs has the strong Erd\H{o}s-Hajnal property.
\end{theorem}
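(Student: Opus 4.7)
My approach follows the general strategy of Bousquet, Lagoutte, and Thomass\'e (BLT) in~\cite{blt2015}, who prove the analogous result for $\{P_k, P_k^\compl\}$-free graphs. The first observation is that each $\dhook{\ell}$ contains $P_{\ell+6}$ as an induced subgraph, so a $\dhookfam{k}$-free graph $G$ that is also $\{P_m, P_m^\compl\}$-free for some fixed $m = m(k)$ immediately has the strong Erd\H{o}s--Hajnal property by BLT. Hence, after noting that the class of $\dhookfam{k}$-free graphs is closed under complementation (the family $\dhookfam{k}$ is self-complementary) and replacing $G$ by its complement if necessary, we may assume that $G$ contains an induced path $P = v_1 v_2 \ldots v_L$ of length $L$ much larger than $k+6$.

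With such a long induced path $P$ in hand, my plan is to extract a linear-size homogeneous pair from the attachment structure of the vertices in $V(G) \setminus P$ to $P$. For any induced sub-path of $P$ of length at least $k+6$, the $\dhookfam{k}$-free assumption forbids the simultaneous existence of a ``hook witness'' at each end, namely a vertex of $V(G)\setminus P$ attached to the sub-path only at its third vertex, together with another vertex attached only at the third-from-last vertex. This is a strong constraint on the bipartite graph between $P$ and $V(G)\setminus P$: for every sufficiently long sub-path, at least one of its two ``hook-pendant'' positions must have an empty private neighborhood in $V(G) \setminus P$. By sliding the sub-path window through $P$ and applying a Ramsey-type partitioning on the attachment patterns, I would colour $V(G) \setminus P$ according to its intersection with the neighborhoods of the path vertices, extract two linear-size subsets $A, B$ with uniform attachment behaviour, and argue that the only way these attachments can be consistent with $\dhookfam{k}$-freeness is if $(A, B)$ is homogeneous.

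The step I expect to be hardest is turning the local ``at most one end has a hook-witness'' constraint into a global homogeneous pair of linear size. The difficulty is that the attachment patterns of vertices in $V(G) \setminus P$ can be varied, and a vertex that is not a simple pendant for a specific sub-path may still ``emulate'' a pendant for a different sub-path obtained by shifting the window. This forces an iterative cleaning argument: at each stage we fix a window, trim the reservoir of vertices whose attachments look pendant-like at that window, and show that the trimming costs only a constant factor per step. One also needs to carefully coordinate how the two pendant positions shift when the window moves, so that both ends of the prospective double hook are controlled simultaneously; this is precisely why the obstruction in Theorem~\ref{thm:main-hook} is the \emph{double} hook rather than a single one, and why the family $\dhookfam{k}$ (containing double hooks of \emph{all} sufficiently large lengths) is used, giving the flexibility to stop the sliding/cleaning procedure at whichever length the induction terminates.
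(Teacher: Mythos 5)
Your opening reduction (either BLT applies, or, after complementing, $G$ contains a long induced path $P$ of constant length $L$) is valid, but it is not where the leverage for this theorem comes from, and the rest of the proposal has a genuine gap. The set $P$ has only constantly many vertices, so classifying $V(G)\setminus P$ by attachment pattern to $P$ partitions almost all of $G$ into at most $2^{L}$ classes; two linear-size classes $A,B$ with ``uniform attachment behaviour'' to $P$ are in no way constrained to be homogeneous to each other, because double-hook-freeness relative to a fixed constant-size path says nothing about the edges \emph{inside} $V(G)\setminus P$. There is no mechanism in your sliding-window/cleaning scheme that converts the local ``at most one hook-witness per window'' constraint into control of the bipartite graph between $A$ and $B$, and this is exactly the step you flag as hard; it is not merely hard, it is unavailable from a single induced path.

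The paper's route supplies the missing control through three ingredients you do not have. First, a R\"odl/Fox--Sudakov sparsification step passes to a linear-size induced subgraph of sublinear maximum degree (without this, even the BLT path-growing argument cannot be run). Second, instead of a path found in $G$, the structural anchor is a separator triple $(A,B,S)$ with $S$ of linear size, $G[A]$ and $G[B]$ connected and anti-adjacent, and $N(A)=N(B)=S$; because every $x\in S$ has neighbours in both $A$ and $B$, a case analysis of how $N_A(x),N_B(x)$ and $N_A(y),N_B(y)$ interact \emph{determines} whether $xy$ is an edge (any deviant configuration is grown, using sparsity and connectivity, into a forbidden double hook assembled from two ``active hooks'' joined by a shortest path). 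This yields a partition of a linear part of $S$ into modules whose quotient graph is claw-free and Berge. Third, a separate weighted strong Erd\H{o}s--Hajnal theorem for claw-free Berge graphs (via clique-separator decompositions and the Chv\'atal--Sbihi/Maffray--Reed structure theory) finishes the job. None of these can be recovered from attachment patterns to a constant-length path, so the proposal as written does not lead to a proof.
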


Note that the result in \cite{blt2015} mentioned earlier, that the class of $\{P_k,P_k^\compl\}$-free graphs has \EH property, is in fact proved by showing that $\{P_k,P_k^\compl\}$-free graphs have the strong \EH property. 
Furthermore, Bonamy, Bousquet and Thomass\'e \cite{bbt2014} show that $\cG_k$ has the strong \EH property, where 
$\cG_k$ is the class of graphs that do not contain the cycle $C_{\ell}$ on $\ell$ vertices or its complement $C_{\ell}^{\compl}$ as an induced subgraph for all $\ell \geq k$.


%
%
%
%
%
%
%
%

In the course of the paper, we shall prove that two further hereditary graph classes have the strong \EH property. We believe these results may be of independent interest. A \emph{hole} in a graph is an induced cycle of length at least $4$ and an {\em antihole} is the complement of such a graph.
A {\em Berge graph} is a graph that does not contain any odd hole or odd antihole.
It follows easily from the Strong Perfect Graph Theorem that the class of Berge graphs satisfies the (weak) \EH property, but a certain random poset construction  \cite{f2006} shows that it does not satisfy the strong \EH property. However if we also forbid the claw, i.e.\ the star on four vertices then the strong \EH property holds. 
\begin{theorem}\label{thm:main-claw-free-intro}
The class of claw-free Berge graphs has the strong Erd\H{o}s-Hajnal property.
\end{theorem}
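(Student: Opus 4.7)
The plan is to exploit the structural theory of claw-free Berge graphs. By the Strong Perfect Graph Theorem, every Berge graph is perfect, so we work inside the class of claw-free perfect graphs. A useful local observation simplifies the problem: in any claw-free Berge graph $G$, the neighbourhood of every vertex $v$ is the union of two cliques, because $\alpha(G[N(v)])\le 2$ by claw-freeness forces $\overline{G[N(v)]}$ to be triangle-free, and a triangle-free perfect graph is bipartite. Hence claw-free Berge graphs sit inside the quasi-line graphs, where a rich decomposition theory (Chv\'atal--Sbihi, Maffray--Reed) is available.

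I would proceed by a decomposition argument. First, if $G$ or $\overline{G}$ is disconnected, two components (respectively co-components) of linear size immediately yield an anti-adjacent (respectively adjacent) homogeneous pair, so we may assume $G$ is connected and co-connected. Next, I would handle clique cutsets: a balanced clique cutset separates two linear-sized pieces whose non-cutset vertices have no edges between them across the cutset, so they form an anti-adjacent pair; an unbalanced clique cutset allows iteration on the larger side with only a controlled loss of total size. This reduces the problem to prime, connected, co-connected claw-free Berge graphs, for which the Chv\'atal--Sbihi theorem states that $G$ is either one of a finite list of ``peculiar'' graphs or is ``elementary'' -- its edges admit a $2$-colouring such that every induced $P_3$ uses both colours, equivalently $G$ is obtained by a controlled augmentation of a bipartite multigraph.

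The peculiar case is handled trivially by choosing $\delta$ small enough. In the elementary case the vertices of $G$ are naturally indexed by the vertices and edges of an underlying bipartite graph $H=(A,B)$, with the adjacencies in $G$ determined locally by the incidence structure of $H$. The plan is to exhibit an anti-adjacent pair by choosing a balanced partition $A=A_1\sqcup A_2$, $B=B_1\sqcup B_2$ such that the two ``diagonal blocks'' $E_H(A_1,B_1)$ and $E_H(A_2,B_2)$, together with their augmenting vertices in $G$, each account for a constant fraction of $V(G)$; such a partition exists by a simple random or greedy argument as long as no single vertex of $H$ is incident to edges carrying a majority of $V(G)$. If instead a single vertex $u\in V(H)$ does carry a linear fraction of $V(G)$ through its incident edges, then those vertices form a linear-sized clique in $G$, and bisecting this clique yields an adjacent pair.

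The main obstacle is verifying that the augmentation operations respect the partition: for each type of augmentation appearing in the structure theorem (flat edges replaced by internal cliques, line-graph augmentations, and the few other basic building blocks), one must check that vertices lying inside an edge $e\in E_H(A_1,B_1)$ are non-adjacent to vertices lying inside any edge $e'\in E_H(A_2,B_2)$, so that the two ``diagonal'' vertex sets remain anti-adjacent after lifting. Combining this case analysis with the reduction steps above should produce a single constant $\delta>0$ valid across all claw-free Berge graphs, as required for the strong \EH property.
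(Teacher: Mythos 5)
Your overall route is the same as the paper's: reduce to graphs with no clique cutset, apply the Chv\'atal--Sbihi dichotomy (elementary or peculiar), use the Maffray--Reed description of elementary graphs as augmentations of line graphs of bipartite multigraphs, and find the homogeneous pair by a balanced partition of the underlying bipartite graph (with a separate clique case when one vertex of $H$ carries a linear fraction). However, there is a genuine gap in your reduction step. Peeling off the smaller side of an ``unbalanced'' clique cutset and iterating does \emph{not} give a controlled total loss: each iteration may discard up to $\delta n$ vertices, the number of iterations is not bounded by any constant, and so the cumulative loss can swallow the whole graph. The paper avoids this by building a tree decomposition all of whose bags induce subgraphs without clique separators (Lemma~\ref{thm:CFP-dec}) and then running a ``central bag'' argument (Lemma~\ref{lem:middle-bag}): orienting each tree edge towards the heavier side yields a bag $t$ such that every component of $G-\beta(t)$ has measure at most $\tfrac12$, whence either $\beta(t)$ itself has measure $\ge \tfrac12-\delta$ or Lemma~\ref{lem:large-comp} produces a balanced anti-adjacent pair. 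You need some version of this global argument; local iteration as you describe it fails.

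Two further points need repair. First, peculiar graphs are \emph{not} a finite list (each of the nine cliques in the Chv\'atal--Sbihi description can be arbitrarily large), so ``choose $\delta$ small enough'' does not dispose of them; what saves you is that the vertex set partitions into nine cliques, so one clique has a linear fraction of the vertices and bisecting it gives an adjacent pair. Second, the augmentation-compatibility step you flag as ``the main obstacle'' is not just a routine check in the clique case: a linear-sized clique in the line graph $L(H')$ need not remain a clique after augmentation, because a flat edge may be replaced by a cobipartite (non-complete) graph. The paper's fix (Lemma~\ref{lem:CFP-hom}) is that a clique on at least three vertices contains no flat edge (flat edges lie in no triangle) and hence survives augmentation intact, while a clique on at most two vertices is handled by taking its heavier vertex, which is replaced by a clique of at least half the measure. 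Anti-adjacent pairs, by contrast, do lift cleanly since augmentation only adds edges incident to the replaced vertices' closed neighbourhoods. With these three repairs your argument matches the paper's proof of Theorem~\ref{thm:main-claw-free}.
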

\noindent
In \cite{lt2015}, Lagoutte and Trunck show that another subclass of Berge graphs has the strong \EH property. This class of graphs is incomparable to the class of claw-free Berge graphs.

The {\em line graph $L(G)$} of a graph $G$ is the graph with vertex set $E(G)$ where $ef$ is an edge in $L(G)$ if and only if $e$ and $f$ share a vertex in $G$. While the class of line graphs is a proper subclass of the class of claw-free graphs, it is incomparable to the class of claw-free Berge graphs so the result below gives another hereditary class for which the strong \EH property holds. 
\begin{theorem}\label{thm:main-line-intro}
The class of line graphs
has the strong Erd\H{o}s-Hajnal property.
\end{theorem}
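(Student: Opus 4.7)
My plan for Theorem~\ref{thm:main-line-intro} is a two-case analysis based on the maximum degree of the underlying graph. Given a line graph $L(G)$ on $n=|E(G)|$ vertices, I fix a small threshold $\delta_1>0$ (to be tuned) and set the target constant $\delta=\delta_1/2$; the aim is to produce in every case a homogeneous pair $(P,Q)$ in $L(G)$ with $|P|,|Q|\ge \delta n$.

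First, suppose some vertex $v\in V(G)$ satisfies $\deg_G(v)\ge \delta_1 n$. Then the edges of $G$ incident to $v$ form a clique $K_v$ in $L(G)$ of size at least $\delta_1 n$, and splitting $K_v$ arbitrarily into two disjoint halves of size at least $\delta n$ produces an adjacent homogeneous pair. In the complementary case $\Delta(G)<\delta_1 n$, I aim for an anti-adjacent pair of the shape $P=E(G[A])$, $Q=E(G[B])$ for some vertex bipartition $V(G)=A\cup B$; such a pair is automatically anti-adjacent since no edge of $G[A]$ can share an endpoint with any edge of $G[B]$. It therefore suffices to prove the following key lemma: whenever $\Delta(G)\le \delta_1 n$, the graph $G$ admits a bipartition $V(G)=A\cup B$ with $|E(G[A])|,|E(G[B])|\ge \delta n$.

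I would establish this key lemma by the second moment method. Place each vertex of $G$ independently in $A$ with probability $1/2$, and set $B=V(G)\setminus A$. Then $\Exp[|E(G[A])|]=n/4$, while the variance of $|E(G[A])|$ is dominated by the contribution of ordered pairs of distinct edges sharing a vertex, which is bounded above by a constant times $\sum_{w\in V(G)}\binom{\deg_G(w)}{2}\le \Delta(G)\cdot n$. Once $\delta_1$ is taken sufficiently small, Chebyshev's inequality yields $\Prob[|E(G[A])|<n/8]<\tfrac{1}{4}$ for $n$ large, and a symmetric bound holds for $|E(G[B])|$; a union bound then produces, with positive probability, a bipartition satisfying both required lower bounds.

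The main technical point will be balancing the two cases with a single threshold $\delta_1$: small enough for the variance to beat the squared mean in the Chebyshev estimate of Case~2, yet large enough so that Case~1 produces a clique of linear size. Any concrete choice such as $\delta_1=\tfrac{1}{40}$ meets both requirements, giving the strong Erd\H{o}s-Hajnal property with $\delta=\delta_1/2$. The finitely many values of $n$ for which the concentration estimate is too weak can be absorbed into a trivial base case by further shrinking $\delta$.
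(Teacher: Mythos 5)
Your proof is correct in outline and reaches the stated (unweighted) theorem, but it takes a genuinely different route from the paper. The paper proves a stronger, \emph{weighted} version (Lemma~\ref{lem:Line-hom}): given any probability measure on $E(H)$, it first takes a random vertex bipartition $V(H)=L\uplus R$ capturing at least half the edge weight, passes to the bipartite subgraph $H'$, uses the absence of a heavy clique to bound the weight of every star, then greedily splits $L$ and $R$ into weight-balanced halves $L_1,L_2,R_1,R_2$ and argues by cases on the four cross-weights $\mu_{ij}$ that one of the two natural pairings $\bigl(E(L_1,R_1),E(L_2,R_2)\bigr)$ or $\bigl(E(L_1,R_2),E(L_2,R_1)\bigr)$ is a heavy anti-adjacent pair. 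You instead run a degree dichotomy (a high-degree vertex gives a linear clique, i.e.\ an adjacent pair) and, in the low-degree case, apply the second moment method to a uniformly random bipartition $V(G)=A\cup B$ to find linearly many edges inside each side, yielding the anti-adjacent pair $(E(G[A]),E(G[B]))$. Both arguments exploit the same structural fact that edge sets spanned by disjoint vertex sets are anti-adjacent in the line graph; yours is shorter and purely probabilistic, while the paper's deterministic four-way partition is what makes the weighted version (needed later for Lemma~\ref{lem:CFP-hom} and Theorem~\ref{thm:main-claw-free}) go through directly. Your Chebyshev argument would in fact also extend to arbitrary edge weights once every star has small measure, though you do not claim this. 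Two small points: your variance bound gives $\Prob[|E(G[A])|<n/8]\le O(\delta_1)+O(1/n)$ with a concrete constant in the $O(\delta_1)$ term around $24$, so $\delta_1=\tfrac{1}{40}$ is too large and should be shrunk (this is exactly the tuning you flag, so it is not a gap); and note the paper's line graphs are line graphs of multigraphs, which your argument handles without change since parallel edges only affect the covariance count by a constant factor.
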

\noindent
In fact we shall require weighted versions of Theorem~\ref{thm:main-claw-free-intro} and Theorem~\ref{thm:main-line-intro}, which we state and prove in Section~\ref{sec:claw-free}.

We remark that although the strong \EH property implies the (weak) \EH property, there are graphs $H$ such that the class $\cG$ of $H$-free graphs 
satisfies the \EH property, yet the class of $\{H,H^\compl\}$-free graphs (and thus $\cG$) fails to satisfy the {\em strong} \EH property. 
The bull, a self-complementary graph, is such an example, as implied by \cite{cs2008} and the following. 
\begin{theorem}\label{er_theorem}
Let $H$ be a graph. 
\begin{itemize}
\item[$(a)$] The class of $H$-free graphs has the strong \EH property if and only if $H$ is an induced subgraph of the four-vertex path $P_4$.
\item[$(b)$] If both $H$ and its complement $H^\compl$ contain a cycle, 
then the class of $\{H,H^{\compl}\}$-free graphs does not have the strong Erd\H{o}s-Hajnal property. 
\end{itemize}
\end{theorem}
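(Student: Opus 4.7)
For part~(a), sufficiency ($\Leftarrow$): if $H$ is an induced subgraph of $P_4$, then every $H$-free graph is $P_4$-free (a cograph), so it suffices to show the strong \EH property for cographs. I would use the cotree decomposition: every cograph has an associated rooted cotree whose internal nodes are labeled as \emph{join} or \emph{union}, with leaves corresponding to vertices. Descending from the root through the child with a strict majority of leaves (as long as such a child exists) reaches an internal node $v$ whose largest child contains at most half of $v$'s leaves. Greedily assigning children to two bins yields two sets of linear size forming a homogeneous pair (adjacent if $v$ is a join, anti-adjacent if $v$ is a union); combined, if needed, with a standard clique/independent-set splitting argument for cographs, this establishes the strong \EH property.

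For part~(a), necessity ($\Rightarrow$): if $H$ is not an induced subgraph of $P_4$, then $H$ contains one of the minimal obstructions $K_3$, $3K_1$, $C_4$, or $2K_2$ as an induced subgraph, so the class of $H$-free graphs is a superclass of the class of $H_0$-free graphs for one of these four $H_0$. Since failure of the strong \EH property is inherited by superclasses, it suffices to exhibit a witness family for each of the four minimal obstruction classes. By complementation, $3K_1$-free reduces to $K_3$-free and $2K_2$-free to $C_4$-free. For $K_3$-free, random triangle-free graphs with sublinear independence number serve as witnesses: triangle-freeness forces any adjacent homogeneous pair to induce a complete bipartite subgraph whose sides must be independent sets, ruling out large adjacent pairs; pseudorandomness rules out large anti-adjacent pairs. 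For $C_4$-free, the Erd\H{o}s--R\'enyi polarity graph (an incidence graph of a projective plane) provides a regular pseudorandom $C_4$-free construction with analogous bounds.

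For part~(b), I would construct a family of $\{H, H^\compl\}$-free graphs lacking linear-size homogeneous pairs. A natural starting point is the Fox~\cite{f2006} random poset construction, which produces perfect (hence Berge) graphs without the strong \EH property; since Berge graphs are $\{C_{2k+1}, C_{2k+1}^\compl\}$-free for every $k \geq 2$, this directly handles any $H$ containing an odd hole or antihole as an induced subgraph. For the remaining $H$ satisfying the hypothesis (both $H$ and $H^\compl$ containing a cycle), I would adapt the construction using blow-up and random sampling techniques tailored to avoid all induced copies of $H$ and $H^\compl$ while preserving the pseudorandomness that precludes linear-sized homogeneous pairs. The principal obstacle is the case where neither $H$ nor $H^\compl$ is a classical Berge-type obstruction (e.g.\ $H = C_6$), requiring a more delicate probabilistic or algebraic construction tailored to the specific structure of $H$.
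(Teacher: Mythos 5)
Your part~(a), sufficiency direction, is fine: the cotree argument for cographs is a correct alternative to the paper's route, which simply cites the $\{P_k,P_k^{\compl}\}$ result of Bousquet, Lagoutte and Thomass\'e with $k=4$. The necessity direction of~(a), however, has a gap in the reduction: the minimal graphs (under the induced-subgraph order) that are not induced subgraphs of $P_4$ are $K_3$, $3K_1$, $C_4$, $2K_2$ \emph{and} $C_5$. Indeed, $C_5$ is triangle-free, has independence number $2$, and contains neither $C_4$ nor $2K_2$ as an induced subgraph, yet it is not contained in $P_4$; so $H=C_5$ escapes your case analysis entirely. This case is repairable (e.g.\ by a pseudorandom graph of girth at least $6$), but as written the argument does not cover it, and the pseudorandomness assertions for the triangle-free and polarity-graph witnesses are themselves left unverified.

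The serious gap is in part~(b), and you have in effect flagged it yourself: you have no construction for a general $H$ with both $H$ and $H^{\compl}$ containing a cycle, and you describe $H=C_6$ as a ``principal obstacle.'' The idea you are missing --- which is the entire content of the paper's proof --- is that no construction tailored to $H$ is needed. Take $G\sim G(n,p)$ with $p=50/(\delta^2 n)$. A first-moment computation shows that with positive probability $G$ has $O_\delta(1)$ cycles of length at most $k:=v(H)$ and no homogeneous pair $(P,Q)$ with $|P|,|Q|\geq \frac{\delta}{2}n$ (the anti-adjacent case uses $2^{2n+1}(1-p)^{\delta^2n^2/10}=o(1)$, and the adjacent case is dominated by it since $p<1/2$). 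Deleting one vertex from each short cycle yields a graph $G_\delta$ on at least $n/2$ vertices with no cycle of length at most $k$ whatsoever; since any graph containing a cycle contains one of length at most its number of vertices, $G_\delta$ contains no induced copy of \emph{any} graph on at most $k$ vertices that has a cycle --- in particular neither $H$ nor $H^{\compl}$ in case~(b), and not $H$ (or, after complementing $G_\delta$, not $H^{\compl}$) in case~(a). This single construction replaces your entire case analysis, including the Fox poset construction, the polarity graphs, and the missing $C_5$ and $C_6$-type cases.
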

\noindent
We expect that the result above, which is proved by a simple random construction, is probably known, but we cannot find it recorded anywhere. We give the details in Section \ref{sec:negative}.

While the strong \EH property requires homogeneous pairs of linear size, if we require homogeneous pairs of only polynomial size, then some strong results are known. In \cite{ehp2000}, Erd\H{o}s, Hajnal and Pach improve results from \cite{eh1977} and show that for every graph $H$ there exists $c>0$ such that  every $H$-free graph $G$ on $n$ vertices admits a homogeneous pair $(P,Q)$ with $|P|,|Q|\ge n^c$. 
Fox and Sudakov \cite{fs2009} showed that in fact, every $H$-free graph $G$ contains either a clique of size $n^c$ or an anti-adjacent pair $(P,Q)$ with $|P|,|Q|\ge n^c$.

Finally, we would like to remark that, as shown by Bousquet, Lagoutte, and Thomass\'{e}~\cite{blt2014},
if a hereditary graph class satisfies the strong Erd\H{o}s-Hajnal property,
then it also admits a clique-independent set separation family of polynomial size (for precise definitions we refer the reader to~\cite{blt2014}).
Consequently, the latter conclusion holds for the family for $\dhookfam{k}$-free graphs for every fixed $k \geq 1$.
We point out that a conjecture of Yannakakis~\cite{Yannakakis91}, stemming from communication complexity and
asserting that \emph{every} graph admits a clique-independent set separation family of polynomial size,
was very recently disproved by G\"{o}\"{o}s~\cite{Goos15}.


\paragraph{Inspiration and methods.}
The original inspiration for our work comes from a paper of Lokshtanov, Vatshelle, and Villanger~\cite{p5-is},
who used the framework of minimal separators and potential maximal cliques to give a polynomial-time algorithm
for the (algorithmic) Independent Set problem in $P_5$-free graphs. Below we give some background to this framework without defining all the notions we make reference to; however we emphasize that the rest of the paper stands independently from this section.


A \emph{minimal separator} in a connected graph is an inclusion-wise minimal set of vertices whose deletion leaves the graph disconnected. A \emph{minimal triangulation} of a graph $G$ is an inclusion-wise minimal set of edges $F$ such that $G+F$, the graph obtained by adding the edges of $F$ to $G$, is a chordal graph. A potential maximal clique of $G$ is a set $K \subseteq V(G)$ which is a maximal clique in $G+F$ for some minimal triangulation $F$. These notions all turn out to be closely related through the notion of treewidth and tree decompositions.


Bouchitte and Todinca~\cite{todinca} studied the notion of potential maximal cliques from this perspective and showed that the natural dynamic programming algorithm for finding a maximum independent set in a graph of bounded treewidth can be modified to find such a set in time polynomial in the size of $G$ and linear in the number of potential maximal cliques in $G$. In this way,
they obtained a unified explanation for the existence of polynomial-time algorithms for the Independent Set problem in many hereditary graph classes. The work for $P_5$-free graphs~\cite{p5-is} follows the same approach, but generalises it, by showing that in $P_5$-free graphs one needs to examine only a particular (polynomially-sized) set of potential maximal cliques in the aforementioned algorithm. Subsequent work~\cite{p6-is} uses minimal separators and potential maximal cliques in a different way to develop a quasipolynomial-time algorithm for the Independent Set problem in $P_6$-free graphs.


Since the framework of minimal separators and potential maximal cliques has been successfully applied to the Independent Set problem for various hereditary graph classes,
we wished to investigate to what extent these methods are useful for problems related to the Erd\H{o}s-Hajnal conjecture. While our original proof of Theorem~\ref{thm:EH-hooks} followed this framework closely, we eventually found a simpler proof which circumvents most of the theory, although some artefacts remain.


Suppose $H$ is a fixed graph and $\cG$ is the class of $\{H, H^{\compl}\}$-free graphs. Our first observation, which is essentially expressed in Lemma~\ref{lem:SparseStructured} but also requires a result from \cite{fs2008}, is the following.
If (for a contradiction) the strong \EH property does not hold for $\cG$, then we may assume that each $n$-vertex graph $G \in \cG$ has maximum degree $o(n)$ and a minimal separator of linear size. It immediately follows that $G$ has three disjoint subsets of vertices $A,B,S$ where $S$ has linear size, where every vertex of $S$ has at least one neighbour in $A$ and $B$, and where there are no edges between $A$ and $B$. 

The next step is to use this additional structure of $G$ to form a hook i.e.\ an induced path on four vertices and to grow a $k$-vertex induced path from the third vertex of the hook. This gives an induced copy of $H_k$ and the desired contradiction. In order to obtain the $k$-vertex induced path, we apply the simple but ingenious argument of Bousquet, Lagoutte, and Thomass\'e~\cite{blt2015} that allows one to grow an arbitrarily long (but constant size) induced path in a connected  graph with sublinear maximum degree. The main work in our proof is to set up the hook so that it will not interfere with the path we wish to grow. If such a hook does not exist, then an involved analysis of vertices in $S$ and how their neighbourhoods interact reveals that $S$ has quite a restricted structure: in particular we can partition a large part of $S$ such that each pair of parts forms a homogeneous pair and such that the `quotient graph' of this partition belongs to a more restricted hereditary graph class than the one we started with. This allows us to push through an induction step which gives a linear sized homogeneous pair.

\paragraph{Structure.}
The rest of the paper is organised as follows. In Section \ref{sec:prelims}, we provide all 
necessary definitions and tools that we use throughout the paper. 
In Section~\ref{sec:claw-free}, we prove weighted versions of Theorem~\ref{thm:main-claw-free-intro} and Theorem~\ref{thm:main-line-intro}. 
In Section \ref{sec:big}, we prove Theorem~\ref{thm:main-hook}, using Theorem~\ref{thm:main-claw-free} 
and a structural result (cf.~Theorem~\ref{thm:tech-hooks}) which we prove in Section~\ref{sec:structuralTheorem}. 
As a warm-up for this technical result and to illustrate our method, we prove a simpler result in 
Section~\ref{sec:warm-up} (cf.~Theorem~\ref{thm:tech-claw-free}). 
In Section~\ref{sec:negative},  we prove Theorem~\ref{er_theorem}. 
We close the paper with some concluding remarks in Section~\ref{sec:concl}.

\section{Preliminaries}\label{sec:prelims}
In this section, we fix notation and terminology, and we prove a lemma which we will use several times throughout the paper.

A graph $G=(V,E)$ consists of a set $V(G):=V$ of vertices and a set $E(G):=E$ of edges, where an edge is an unordered pair of vertices. A multigraph is defined in the same way except that we allow $E(G)$ to be a multiset. A directed multigraph $D=(V,A)$ consists of a vertex set $V$ and an arc multiset $A$, where an arc is an ordered pair of vertices.  
For graphs and multigraphs we set $v(G):= |V(G)|$ and $e(G):=|E(G)|$. 
We denote the {\em complement} of a graph $G$ by $G^\compl$
where $V(G^\compl):=V(G)$ and $e \in E(G^\compl)$ if and only $e \not\in E(G)$. 
For an edge $e\in E(G)$, we write $G\sm e$ for the graph on the same vertex set as $G$ and with edge set $E(G)\sm\{e\}$.

Let $X\se V(G)$ be a subset of the vertices of a graph $G$. 
We denote by $G[X]$ the {\em induced subgraph of $G$ on $X$}
i.e.\ the graph with vertex set $X$ 
and edge set $E(G[X]):=\{uv\in E(G) \colon u,v\in X\}$. 
We 
write $G-X$ for the induced subgraph of $G$ on $V(G)\sm X$. 
Let $N_G(X) := \{u \in V(G)\sm X: uv \in E(G) \mbox{ for some } v\in X\}$ denote the {\em (open) neighbourhood of $X$} 
and let $N_G[X] := X \cup N_G(X)$ denote the {\em closed neighbourhood of $X$}. 
We omit the subscript if the graph $G$ is clear from context. 
We write $N(v):=N(\{v\})$ and $N[v]:= N[\{v\}]$. 
Furthermore, for a set $A \subseteq V(G)$ we define $N_A(X) := N(X) \cap A$ and $N_A[X] := N[X] \cap A$.
For brevity, if $X = \{x,y\}$, we write $N(x,y)$ 
instead of $N(\{x,y\})$. 
For a graph $G$ and two disjoint sets $X,Y \subseteq V(G)$, 
we denote by $E_G(X,Y)$ the set of edges of $G$ with one endpoint
in $X$ and one endpoint in $Y$.

A graph $H$ is called {\em a subgraph of $G$}, denoted by $H\se G$, if $V(H)\se V(G)$ 
and $E(H)\se E(G[V(H)])$. It is called an {\em induced subgraph of $G$} if $E(H)= E(G[V(H)])$. 
A {\em $k$-vertex path}, denoted by $P_k$, is the graph on $k$ vertices $\{v_1,\ldots, v_k\}$ 
with edge set $E(P_k):=\{v_iv_{i+1} \colon 1\leq i < k\}$. 
We call a graph {\em connected} if for every pair of vertices $x,y$ there exists a $k$-vertex path $P$ for some $k\geq 1$ 
that is a subgraph of $G$ and that contains both $x$ and $y$. 
A {\em component} in $G$ is a maximally connected subgraph of $G$. 

A {\em complete graph} is one where all possible edges are present. A {\em clique} in a graph $G$ is a subset $X\se V(G)$ such that $G[X]$ is a complete graph 
and an {\em independent set} in $G$ is a subset $X\se V(G)$ such that $G[X]$ is the empty graph. In each case, we may also refer to the subgraph $G[X]$ as a clique or independent set.

We already defined the line graph of a graph, and more generally 
if $G$ is a multigraph, the line graph $L(G)$ of $G$ is the graph with vertex set $E(G)$ where $ef$ is an edge of $L(G)$ if and only if $e$ and $f$ share a vertex in $G$. A graph $G'$ is called a line graph if it is the line graph of some (multi)graph $G$. 

Given four distinct vertices $x,a,b,c$ of a graph $G$, we say that {\em $(x;a,b,c)$ is a claw in $G$},
if $G[\{x,a,b,c\}]$ is isomorphic to a claw with $x$ being the degree-three vertex. 

\medskip
\noindent
{\bf Hooks.} 
For $k\geq 0$, recall that a $k$-hook, denoted by $\hook{k}$, is a $(k+3)$-vertex path, say on vertex set $\{v_1,\ldots,v_{k+3}\}$ and edges $v_iv_{i+1}$ for $1\leq i \leq k+2$, together with a pendant edge $v_{k+1}v_{k+4}$. The vertex $v_1$ is called the {\em active vertex} of the $k$-hook. 
Note that a $0$-hook, denoted $\hook{0}$, is the four-vertex path $P_4$, with one of its interior vertices designated as an active vertex. 

When constructing an induced $k$-hook in a graph $G$ we often start with a $0$-hook i.e.\ a copy of an induced $P_4$, and then ``grow'' a path by adding edges subsequently to the active vertex. 
The following notion is helpful. 
An {\em active $k$-hook} in a graph $G$ is a pair $(X,R)$, where $X,R \subseteq V(G)$, 
$X \cap R = \emptyset$, $G[X]$ is isomorphic to a $k$-hook, 
$G[R]$ is connected, and $N(R) \cap X$ consists of exactly one
vertex, namely the active vertex of the $k$-hook $G[X]$.

\medskip
\noindent
{\bf Modules.} 
Frequently, we will encounter sets in our graph $G$ that ``behave like a single vertex'' in the following way. 
A set $X \subseteq V(G)$ is a \emph{module} in $G$ if for every $x,y \in X$ we have $N(x) \setminus X = N(y) \setminus X$.
For a partition $V(G) = X_1\uplus X_2 \uplus\ldots \uplus X_r$  into nonempty modules $X_1, \ldots, X_r$, observe that, for every $i \neq j$, the pair $(X_i,X_j)$ is a homogeneous pair. 
For such a partition, the quotient graph $G_q$ is defined to be the graph with vertex set $\{X_1,X_2,\ldots,X_r\}$
where two sets $X_i$ and $X_j$ are connected by an edge in $G_q$ if and only if they form an adjacent pair in $G$.
Note that a quotient graph is necessarily isomorphic to some induced subgraph of $G$, namely one formed by taking exactly one vertex from every set $X_i$.

\medskip
The following simple lemma is used frequently throughout the paper.  

\begin{lemma}\label{lem:large-comp}
Let $G$ be a graph, $\mu$ be a probability measure on $V(G)$, $\delta > 0$, and $X \subseteq V(G)$ such that $\mu(X) > 3\delta$.
Then there exists either a set $P \subseteq X$ such that $(P,X \setminus P)$
is an anti-adjacent pair
and $\mu(P),\mu(X \setminus P) > \delta$, or the largest component of $G[X]$ has measure at least $\mu(X)-\delta$.
\end{lemma}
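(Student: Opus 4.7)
The plan is a straightforward case analysis based on the measure of the largest component of $G[X]$. Let $C_1, C_2, \ldots, C_m$ be the components of $G[X]$, ordered so that $\mu(C_1) \geq \mu(C_2) \geq \cdots \geq \mu(C_m)$. The key observation is that for \emph{any} partition of $\{1,\ldots,m\}$ into $I \uplus J$, the two sets $P = \bigcup_{i \in I} C_i$ and $X \setminus P = \bigcup_{j \in J} C_j$ form an anti-adjacent pair, because no edges of $G[X]$ cross between distinct components. So the task reduces to choosing $I$ so that both $\mu(P)$ and $\mu(X \setminus P)$ exceed $\delta$, whenever the largest component fails to have measure $\geq \mu(X) - \delta$.

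If $\mu(C_1) \geq \mu(X) - \delta$, the second alternative in the lemma holds and there is nothing more to do. Otherwise $\mu(X \setminus C_1) > \delta$. Now distinguish two subcases. First, if $\mu(C_1) > \delta$, then setting $P := C_1$ immediately yields $\mu(P), \mu(X \setminus P) > \delta$. Second, if $\mu(C_1) \leq \delta$, then \emph{every} component has measure at most $\delta$, and we accumulate components greedily: let $j$ be the smallest index such that $\mu(C_1 \cup \cdots \cup C_j) > \delta$, which exists since $\mu(X) > 3\delta$. Set $P := C_1 \cup \cdots \cup C_j$. By minimality of $j$, the set $P \setminus C_j$ has measure at most $\delta$, so $\mu(P) \leq \delta + \mu(C_j) \leq 2\delta$, whence $\mu(X \setminus P) \geq \mu(X) - 2\delta > \delta$, as required.

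There is no real obstacle here: the statement is elementary and the only subtlety is balancing the two sides when $C_1$ is itself too small to serve as $P$, which the greedy accumulation resolves using the slack provided by the hypothesis $\mu(X) > 3\delta$.
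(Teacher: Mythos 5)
Your proof is correct and follows essentially the same argument as the paper: the same three-way case split on the measure of the largest component, with the same greedy accumulation of components (each of measure at most $\delta$) until the running total first exceeds $\delta$, giving $\mu(P) \leq 2\delta$ and hence $\mu(X\setminus P) > \delta$ from $\mu(X) > 3\delta$. The ordering of components by measure is harmless but unnecessary.
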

\begin{proof}
Let $C$ be the vertex set of a component of $G[X]$ such that $\mu(C)$ is maximal.
If $\mu(C) \geq \mu(X)-\delta$, then we are done. Also, if $\delta < \mu(C) < \mu(X)-\delta$,
then we are done by taking $P := C$.
In the remaining case, 
when all components of $G[X]$ have measure at most $\delta$,
we proceed as follows. We initiate $P := \emptyset$, and iterate over 
components of $G[X]$ one-by-one, putting them into the set $P$
until $\mu(P)$ exceeds $\delta$. Since every component of $G[X]$ has 
measure at most $\delta$, we have $\delta < \mu(P) \leq 2\delta$ at the end of the process. Since $\mu(X) > 3\delta$,
we have $\mu(X \setminus P) > \delta$. Furthermore, by construction, $P$ and
$X \setminus P$ form an anti-adjacent pair. This concludes the proof of the lemma.
\end{proof}

\section{Line graphs and claw-free Berge graphs}\label{sec:claw-free}
In this section, we state and prove weighted versions of Theorem~\ref{thm:main-claw-free-intro} and Theorem~\ref{thm:main-line-intro} from which those theorems immediately follow.  

A graph class $\mathcal{G}$ has \emph{the weighted strong Erd\H{o}s-Hajnal property} 
if there exists a constant $\bheps > 0$ such that every $G \in \mathcal{G}$ satisfies the following property.
For every probability measure $\mu$ on $V(G)$ satisfying $\mu(v) \leq 1-2\bheps$ for all $v \in V(G)$,  
there exists a homogeneous pair $(P,Q)$ in $G$ with $\mu(P),\mu(Q) \geq \bheps$. 
The condition that $\mu(v) \leq 1-2\bheps$ for all $v\in V(G)$ is necessary since for degenerate measures, 
where most of the mass is concentrated on one vertex only, we cannot hope to find a  
homogeneous pair of sufficient mass, for any graph $G$.

We shall prove the following two theorems.

\begin{theorem}\label{thm:main-claw-free}
The class of claw-free Berge graphs
has the weighted strong Erd\H{o}s-Hajnal property.
\end{theorem}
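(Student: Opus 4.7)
The plan is to perform two standard reductions that leave us with the task of locating a clique or independent set of constant $\mu$-mass, and then to exploit the structure of claw-free perfect graphs---ultimately reducing to the weighted line-graph setting handled by Theorem~\ref{thm:main-line-intro}---to produce such a set.

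First I would reduce to the case in which no single vertex is heavy. If some $v$ has $\mu(v) \ge \delta$, then since $\mu(v) \le 1 - 2\delta$ we have $\mu(N(v)) + \mu(V(G) \setminus N[v]) \ge 2\delta$, so one of these two sets has mass at least $\delta$ and forms a homogeneous pair with $\{v\}$. Assume henceforth that $\mu(v) < \delta$ for every $v$. Under this assumption, any clique or independent set $X$ with $\mu(X) \ge 3\delta$ can be split into a homogeneous pair $(P, Q)$ with $\mu(P), \mu(Q) \ge \delta$: greedily move vertices of $X$ into $P$ until $\mu(P) \ge \delta$; since the last vertex added has mass below $\delta$, we have $\mu(P) < 2\delta$ on termination, whence $\mu(Q) > \delta$. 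It therefore suffices to produce a clique or independent set of $\mu$-mass at least $3\delta$.

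Claw-freeness combined with perfection (the Strong Perfect Graph Theorem applied to Berge graphs) gives a strong local structure: for every $v$, $G[N(v)]$ has independence number at most $2$, so its complement is triangle-free and perfect, hence bipartite; thus $N(v) = K_1(v) \cup K_2(v)$ is the union of two cliques. If some $v$ satisfies $\mu(N(v)) \ge 6\delta$, then one of the cliques $\{v\} \cup K_i(v)$ has mass at least $3\delta$ and we conclude via the reduction above. Otherwise, $\mu(N[v]) < 7\delta$ for every $v$, and I would invoke the classical structural decomposition of connected claw-free Berge graphs (Chv\'atal--Sbihi, refined by Maffray--Reed): up to clique cutsets and homogeneous pairs of cliques, such a graph is either from a finite ``peculiar'' family, or is obtained from the line graph of a bipartite multigraph by controlled augmentation operations. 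A clique cutset $C$ has $\mu(C) < 3\delta$ and separates the remaining vertex set into non-empty anti-adjacent sides, from which Lemma~\ref{lem:large-comp} extracts an anti-adjacent pair of the required size; a homogeneous pair of cliques directly yields the desired pair; peculiar graphs have bounded order, which is incompatible with $\mu(v) < \delta$ once $\delta$ is small enough. In the elementary case I would transport $\mu$ to the underlying bipartite multigraph and apply the weighted line-graph result, then pull the resulting homogeneous pair back through the augmentations.

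The principal obstacle I anticipate is the elementary case: the augmentation operations introduce blow-ups and local modifications that must be tracked carefully so that a pair produced in the line graph survives as a homogeneous pair of comparable $\mu$-mass in $G$ itself. Once that bookkeeping is performed, consolidating the constants across the four structural branches into a single $\delta$ is routine.
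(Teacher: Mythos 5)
Your route is essentially the paper's: eliminate heavy vertices, decompose along clique cutsets, apply Chv\'atal--Sbihi to the cutset-free pieces, and in the elementary case transport the measure through the Maffray--Reed augmentation description down to the weighted line-graph statement. However, three of your steps do not work as written.

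First, a single clique cutset $C$ with $\mu(C)<3\delta$ does \emph{not} hand you an anti-adjacent pair via Lemma~\ref{lem:large-comp}: the lemma's alternative outcome is that one component of $G-C$ carries all but $\delta$ of the remaining mass, in which case the cutset gives you nothing and you must recurse into that component together with $C$. The paper packages this recursion as a tree decomposition into bags with no clique separator (Lemma~\ref{thm:CFP-dec}) followed by the central-bag argument (Lemma~\ref{lem:middle-bag}), which outputs a single separator-free bag of mass at least $\tfrac12-\delta$; some such iteration is unavoidable and is missing from your sketch. Second, peculiar graphs do \emph{not} form a finite family --- they are built from nine cliques of arbitrary sizes --- so your dismissal of that case as ``bounded order, incompatible with $\mu(v)<\delta$'' is false. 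The correct handling is immediate from the nine-clique partition: one clique has mass at least $\mu(Y)/9$, which you then split using your own first reduction. Third, the augmentation bookkeeping you defer is where the real content of the elementary case lies, and it is not routine in one respect: a clique $K$ produced by the line-graph lemma may consist of a single flat edge, and augmentation replaces its two endpoints by cliques joined by a bipartite graph that need not be complete, so $K$ need not survive as a clique in $G$. The paper's resolution (Lemma~\ref{lem:CFP-hom}) is the case split $|K|\le 2$ (keep only the heavier endpoint, which is replaced by a clique carrying at least half the mass) versus $|K|\ge 3$ (no edge of a triangle is flat, so $K$ is untouched). Your observation that each $N(v)$ is a union of two cliques is correct but plays no role in closing any of these gaps.
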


\begin{theorem}\label{thm:main-line}
The class of line graphs
has the weighted strong Erd\H{o}s-Hajnal property.
\end{theorem}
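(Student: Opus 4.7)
Let $G$ be a multigraph so $L(G)$ is the given line graph, write $E_v:=\{e\in E(G):v\in e\}$ for $v\in V(G)$, and set $w(v):=\mu(E_v)$ for the probability measure $\mu$ on $V(L(G))=E(G)$ satisfying $\mu(e)\le 1-2\delta$ for every $e$; then $\sum_v w(v)=2$. We seek a homogeneous pair $(P,Q)$ in $L(G)$ with $\mu(P),\mu(Q)\ge\delta$ for an absolute constant $\delta>0$. Two templates are available: any two disjoint subsets of $E_v$ form an adjacent pair, since $E_v$ is a clique of $L(G)$; and any two disjoint vertex sets $V_1,V_2\subseteq V(G)$ yield an anti-adjacent pair $(E(V_1),E(V_2))$ in $L(G)$, since no edge inside $V_1$ can share a vertex with one inside $V_2$. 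The proof is a case analysis which produces one of these two templates in every regime.

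Three ``heavy'' cases dispose of most regimes via a greedy split. If some edge $e^*=uv$ has $\mu(e^*)\ge\delta$, then the residual mass $1-\mu(e^*)\ge 2\delta$ splits into $A:=(E_u\cup E_v)\setminus\{e^*\}$ and $B:=E(G)\setminus(E_u\cup E_v)$; whichever has measure $\ge\delta$ pairs with $\{e^*\}$ to give an adjacent (if $A$) or anti-adjacent (if $B$) pair. Otherwise $\mu(e)<\delta$ for every $e$. If some pair $u,v$ has combined parallel-edge measure $P_{uv}:=\sum_{e\in E(u,v)}\mu(e)\ge 3\delta$, then the parallel edges between $u$ and $v$ form a clique of mutual twins in $L(G)$, and a greedy split of them into $P,Q$ with $\mu(P)\in[\delta,2\delta)$ yields $\mu(Q)\ge 3\delta-2\delta=\delta$, an adjacent pair. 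Similarly, if some vertex $v^*$ has $w(v^*)\ge 3\delta$, then a greedy split of the clique $E_{v^*}$ in $L(G)$ produces an adjacent pair of measure $\ge\delta$ each.

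In the remaining regime $\mu(e)<\delta$, $P_{uv}<3\delta$, and $w(v)<3\delta$ hold uniformly; in particular $\sum_v w(v)^2<6\delta$ and $\sum_{\{u,v\}}P_{uv}^2<3\delta$. Here I would produce an anti-adjacent pair via a uniformly random vertex bipartition $V(G)=V_1\uplus V_2$, placing each vertex independently in $V_1$ with probability $1/2$. Writing $c:=\mu(E(V_1,V_2))$ for the cut mass, the identity $w(V_i)=2\mu(E(V_i))+c$ shows that $\mu(E(V_i))\ge\delta$ is equivalent to $w(V_i)\ge 2\delta+c$. A direct computation shows that the indicators $[X_u\ne X_v]$ for distinct unordered pairs $\{u,v\}$ are pairwise uncorrelated (even when they share a vertex), so $\mathbb{E}[c]=1/2$ and $\mathrm{Var}[c]=\tfrac14\sum_{\{u,v\}}P_{uv}^2\le\tfrac{3\delta}{4}$; likewise $\mathbb{E}[w(V_1)]=1$ and $\mathrm{Var}[w(V_1)]=\tfrac14\sum_v w(v)^2\le\tfrac{3\delta}{2}$. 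Two applications of Chebyshev's inequality with deviation threshold $\tfrac14-\delta$ give, for $\delta$ sufficiently small (e.g.\ $\delta=1/100$), positive probability that $|c-1/2|\le\tfrac14-\delta$ and $|w(V_1)-1|\le\tfrac14-\delta$ simultaneously; then $w(V_i)-c\ge 2\delta$ for both $i$, so $\mu(E(V_i))\ge\delta$ and $(E(V_1),E(V_2))$ is the desired anti-adjacent pair.

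\textbf{Main obstacle.} The essential technical point is the last case, which rests on simultaneous concentration of the cut mass $c$ and the balance $w(V_1)$ for a uniformly random bipartition. Both concentrations are driven by the case hypothesis that no edge, parallel class, or vertex carries more than $O(\delta)$ $\mu$-mass, which is precisely what pushes both variances to $O(\delta)$ and makes Chebyshev strong enough at the chosen deviation threshold. Selecting the constant $\delta$ small enough that all Chebyshev failure probabilities sum strictly below $1$ is the delicate quantitative step that drives the proof.
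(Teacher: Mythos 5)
Your proof is correct, but it takes a genuinely different route from the paper's. The paper (Lemma~\ref{lem:Line-hom}) first fixes a bipartition $V(H)=L\uplus R$ whose cut carries mass $w\ge\tfrac12$ (existence via the expectation of a uniformly random cut), restricts to the bipartite subgraph $H'$, and then argues deterministically: if no star is heavy, each side is greedily split into two halves of nearly equal incident mass, and a pigeonhole on the four quadrant masses $\mu_{ij}=\mu(E_{H'}(L_i,R_j))$ produces an anti-adjacent pair of the form $(E_{H'}(L_1,R_1),E_{H'}(L_2,R_2))$ or $(E_{H'}(L_1,R_2),E_{H'}(L_2,R_1))$. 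You instead dispose of heavy edges, parallel classes and stars by greedy splits of cliques in $L(G)$ (note your parallel-class case is subsumed by the heavy-vertex case, since $P_{uv}\le w(u)$), and then run a second-moment argument on a uniformly random bipartition, taking as your anti-adjacent pair the two sets of \emph{internal} edges $(E(V_1),E(V_2))$ rather than two disjoint pieces of the cut; the pairwise uncorrelatedness of the cut indicators and the Chebyshev bookkeeping are correct, and $\delta=1/100$ does make the failure probabilities sum below $1$. What each approach buys: the paper's argument is deterministic after the initial max-cut existence step, gives the better constant $\tfrac1{14}$, and—importantly for the rest of the paper—is packaged as the dichotomy ``clique of measure $3\bheps_1$ or anti-adjacent pair,'' which is exactly the form reused in Lemma~\ref{lem:CFP-hom} for elementary graphs (where cliques need separate treatment under augmentation); your version delivers the homogeneous pair directly and more probabilistically, which suffices for Theorem~\ref{thm:main-line} itself but would need the clique/anti-adjacent dichotomy reinstated to serve as a substitute for Lemma~\ref{lem:Line-hom} downstream.
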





Theorem~\ref{thm:main-line} is an immediate corollary of the next lemma, 
where we prove that in any line graph with vertex weights, we find either an anti-adjacent pair 
or a {\em clique} of positive mass.

\begin{lemma}\label{lem:Line-hom}
Let $\bheps_1 = \frac{1}{14}$. 
Then for every graph $G$ that is a line graph of some multigraph $H$, and every probability measure $\mu$ on $V(G)$,
there exists either a clique $K$ in $G$ with $\mu(K) \geq 3\bheps_1$ or an anti-adjacent pair $(P,Q)$ in $G$
with $\mu(P),\mu(Q) > \bheps_1$.
\end{lemma}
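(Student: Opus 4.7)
The plan is to translate the problem to the underlying multigraph $H$. Cliques in $G = L(H)$ come in two flavours: the \emph{stars} at a vertex $v \in V(H)$ (all edges incident to $v$, with total measure $w(v) := \sum_{e \ni v} \mu(e)$) and the \emph{triangles} of $H$ (three pairwise adjacent vertices, giving three edges that pairwise share a vertex). An anti-adjacent pair $(P,Q)$ in $G$ corresponds to two sets of edges of $H$ with disjoint vertex-supports in $V(H)$; equivalently, to a vertex partition $V(H) = V_1 \sqcup V_2 \sqcup V_0$ with $P$ and $Q$ drawn from the edges lying inside $V_1$ and $V_2$ respectively. So it suffices to exhibit either a star or triangle of edge-measure at least $3\bheps_1$, or such a partition with internal edge-measures on the two sides both strictly exceeding $\bheps_1$.

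First I apply Lemma~\ref{lem:large-comp} to $G$ itself with $\delta = \bheps_1$ and $X = V(G)$: since $\mu(X) = 1 > 3\bheps_1$, either the lemma immediately produces the desired anti-adjacent pair, or some connected component of $G$ carries measure at least $1 - \bheps_1 = 13\bheps_1$. Components of $L(H)$ correspond to components of $H$ containing at least one edge, so I may replace $H$ by a single distinguished component $H^*$ with $\mu(E(H^*)) \geq 13\bheps_1$. Next, if some $w(v) \geq 3\bheps_1$ or some triangle has total edge-measure at least $3\bheps_1$, I return that clique and stop. Otherwise every clique in $G$ has measure strictly less than $3\bheps_1$, and the identity $\sum_v w(v) = 2\mu(E(H^*))$ forces $|V(H^*)| \geq 10$.

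The heart of the proof is a short iterative vertex-removal procedure. Pick $v^{(1)} \in V(H^*)$ of maximum weight and examine the components $C_1^{(1)}, C_2^{(1)}, \ldots$ of $H^* - v^{(1)}$ with edge-measures $m_1^{(1)} \geq m_2^{(1)} \geq \ldots$; they sum to at least $\mu(E(H^*)) - w(v^{(1)}) > 10\bheps_1$. If the largest component does not swallow almost all of this mass, a greedy grouping argument in the spirit of Lemma~\ref{lem:large-comp} partitions the components into two families whose internal edge-measures both exceed $\bheps_1$, and absorbing $v^{(1)}$ into either side yields the required anti-adjacent pair. Otherwise the dominant component $C_1^{(1)}$ carries nearly all the mass, and I recurse into $H^*[V(C_1^{(1)})]$ with a new max-weight vertex $v^{(2)}$, and so on. Each iteration loses at most $3\bheps_1$ to the removed star and at most $\bheps_1$ to the discarded "side" components, so a few rounds of bookkeeping drive the input measure down to the regime where the splitting case must apply. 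A key observation is that the discarded side-components $L_1, L_2, \ldots$ accumulated across rounds, together with the final surviving dominant component, have pairwise disjoint vertex-supports in $V(H^*)$ (each $L_k$ sits inside $V(H_k) \setminus V(H_{k+1})$), so they are pairwise anti-adjacent in $L(H)$ and can be combined freely into one side of the final pair.

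The main obstacle is the degenerate regime in which every iteration has $L_k = 0$, i.e., each chosen $v^{(k)}$ fails to be a cut vertex of $H_k$; the accumulated leftovers then contribute no measure and the gluing strategy collapses. I would address this by prioritising cut vertices when selecting $v^{(k)}$, which reduces the residual case to the situation where each $H_k$ is $2$-connected. In that regime one needs an auxiliary ingredient---for instance, a $2$-vertex cut together with a greedy split; a BFS-layer argument exploiting that $\sum_v w(v) = 2\mu(E(H_k))$ is bounded, so some BFS layer carries small total vertex-weight and its removal separates $H_k$ into two pieces of internal edge-measure $> \bheps_1$; or the direct observation that a matching in $H_k$ of $\mu$-measure exceeding $2\bheps_1$ can be split into two halves with automatically disjoint vertex-supports, producing the anti-adjacent pair on the nose (matching edges share no vertex). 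Verifying that with the explicit constant $\bheps_1 = 1/14$ at least one of these supplementary arguments always triggers, and that the numerics of the iterative part close cleanly, is the main technical work of the proof.
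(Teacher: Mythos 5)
Your overall strategy --- work in the underlying multigraph, classify cliques of $L(H)$ as stars and triangles, and look for two vertex-disjoint edge sets of measure $>\bheps_1$ --- is a reasonable reformulation, but the proof as written has a genuine, and indeed self-acknowledged, gap. The iterative vertex-removal procedure only makes progress when the chosen vertex $v^{(k)}$ either splits $H_k$ into balanceable components or sheds side-components carrying real measure. In the residual regime (each $H_k$ $2$-connected, or more generally one dominant component always surviving with $L_k$ of negligible measure), the recursion terminates having produced neither a heavy clique nor an anti-adjacent pair, and everything rests on the ``auxiliary ingredient.'' None of the three candidates you list is verified, and at least one visibly does not close as stated: a matching of $\mu$-measure exceeding $2\bheps_1$ cannot in general be split into two halves each of measure $>\bheps_1$ (a single matching edge could carry, say, $2.5\bheps_1 < 3\bheps_1$, which is simultaneously too heavy to split around and too light to be a winning clique); moreover the existence of a matching of that measure in the residual $2$-connected graph is itself unproven. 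You explicitly defer ``the main technical work'' to checking that one of these always triggers with $\bheps_1 = 1/14$, so the core case of the lemma is not established.

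For contrast, the paper avoids all of this with a short global argument: a uniformly random bipartition $V(H) = L \uplus R$ captures edge measure $w \ge 1/2$ in the bipartite subgraph $H'$; assuming no clique of measure $\ge 3\bheps_1$, every star weight $f(v)$ is $<3\bheps_1$, so each of $L$ and $R$ can be greedily split into two parts of star-weight roughly $w/2$ each; then among the four quadrants $E_{H'}(L_i,R_j)$, either both diagonal quadrants or both off-diagonal quadrants have measure $>\bheps_1$, and each such pair is vertex-disjoint in $H$, hence anti-adjacent in $L(H)$. If you want to salvage your approach, note that this bipartition-and-quadrant trick is exactly the kind of ``supplementary argument'' your $2$-connected case needs --- but once you have it, the entire iterative machinery becomes unnecessary.
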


\begin{proof}
Fix a multigraph $H$, let $G$ be its line graph, and fix a probability measure $\mu$ on $V(G)$. 
By definition, $\mu$ is a probability measure on the edges of $H$. We can find a partition of $V(H)=L \uplus R$ such that $w:=\mu(E_H(L,R)) \geq \frac 12$. Such a partition exists since for a uniformly random partition $V(H) = V_1 \uplus V_2$ the expected
value of $\mu(E_H(V_1,V_2))$ is $\frac 12$.
Let $H'$ be the bipartite subgraph of $H$ with $V(H') = V(H)$ and
$E(H') = E_H(L,R)$.
Notice that the line graph of $H'$ is an induced subgraph of $G$. 
We define a function $f:V(H) \to [0,1]$ by $f(v)=\sum_{u: uv \in E(H')} \mu(uv)$. We naturally extend $f$ to subsets of $V(H)$ by summation over the elements of the subset. Notice that $w:=f(L)=f(R) \geq \frac 12$.

Assume that $G$ has no clique of measure at least $3\bheps_1$.
Since the set of edges adjacent to a single vertex in $H'$ forms a clique in $G$, we may deduce that $f(v)< 3\bheps_1$ for every vertex in $H'$. 
We find a partition $L = L_1 \uplus L_2$ of $L$  such that 
$\frac w2 - \frac{3}{2}\bheps_1 < f(L_1),f(L_2)< \frac w2 + \frac{3}{2}\bheps_1$ in the following way.  
Pick $u\in L$, set $L_1:=\{u\}$, and add vertices from $L$ to $L_1$, one at a time, until $f(L_1)>\frac w2 - \frac{3}{2}$. 
Since we add less that $3\bheps_1$ to $f(L_1)$ each time, the upper bound on $f(L_1)$ follows. 
In the same way, we find a partition $R=R_1\uplus R_2$ of $R$ such that 
$\frac w2 - \frac{3}{2}\bheps_1 < f(R_1),f(R_2) < \frac w2 + \frac{3}{2}\bheps_1$.

For $i,j\in \{1,2\}$, let $\mu_{ij}:=\mu(E_{H'}(L_i,R_j))$. We have $\mu_{i1}+\mu_{i2}=f(L_i) \geq \frac w2 - \frac{3}{2} \bheps_1$. Likewise, $\mu_{1i}+\mu_{2i}=f(R_i) \geq \frac w2 - \frac{3}{2}\bheps_1$.

If $\mu_{12}<\bheps_1$ or $\mu_{21} < \bheps_1$, then both $\mu_{11},\mu_{22} > \frac w2 -\frac{5}{2}\bheps_1 \geq \bheps_1$. In that case, we may take 
$P=E_{H'}(L_1,R_1), Q=E_{H'}(L_2,R_2)$, since then $(P,Q)$ is an anti-adjacent pair in $G$. 
In the other case, if $\mu_{12},\mu_{21} \geq \bheps_1$,
we may take $P=E_{H'}(L_1,R_2)$ and $Q=E_{H'}(L_2,R_1)$,
and again observe that $(P,Q)$ is an anti-adjacent in $G$.
\end{proof}

We now turn to the proof of Theorem~\ref{thm:main-claw-free}. We resort to some known structural results on claw-free graphs. 
Let us first recall some standard terminology that we need.  
For a graph $G$, a pair $(T,\beta)$ is called a \emph{tree decomposition of $G$} 
if $T$ is a tree, $\beta:V(T) \to 2^{V(G)}$ is a function, and the following conditions hold:
\begin{enumerate}
\item $V(G) = \bigcup_{t \in V(T)} \beta(t)$;
\item for every $v \in V(G)$ the set $\{t : v \in \beta(t)\}$ induces a connected
subgraph of $T$;
\item for every edge $uv \in E(G)$ there exists $t \in V(T)$ such that
$u,v \in \beta(t)$.
\end{enumerate}
For a tree decomposition $(T,\beta)$ and a node $t\in V(T)$, the set $\beta(t)$ is called {\em a bag}.  
A subset $S\se V(G)$ is called {\em a separator (of $G$)} if there exist two vertices $x,y \in V(G)\sm S$ 
such that $x$ and $y$ lie in different components of $G-S$.
A {\em clique separator in $G$} is a set $S\se V(G)$ that is a separator of $G$ and such that $G[S]$ forms a clique. 
The following result on the existence of a clique separator decomposition is considered to be folklore, see e.g.~\cite{BerryPS10}. 
Since in most references it is phrased as a recursive graph decomposition instead of a tree decomposition, 
we provide the short proof for completeness. 

\begin{lemma}\label{thm:CFP-dec}
For every graph $G$ there exists a tree decomposition of $G$ where every bag induces a graph without clique separators. 
\end{lemma}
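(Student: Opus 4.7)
The plan is to proceed by induction on $|V(G)|$. If $G$ contains no clique separator, then the trivial tree decomposition with a single node $t$ and $\beta(t) = V(G)$ already satisfies the conclusion. Otherwise, pick any clique separator $S$, and let $C_1, \ldots, C_k$ (with $k \geq 2$) be the vertex sets of the components of $G - S$. Set $G_i := G[C_i \cup S]$, so that $|V(G_i)| < |V(G)|$. Apply the inductive hypothesis to each $G_i$ to obtain a tree decomposition $(T_i, \beta_i)$ of $G_i$ in which every bag induces a subgraph without a clique separator.

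The key step is to glue the $(T_i,\beta_i)$ together along $S$. Since $S$ is a clique in $G_i$, any two vertices of $S$ share a bag of $T_i$, so the subtrees $\{t \in V(T_i) : v \in \beta_i(t)\}$ for $v \in S$ pairwise intersect. The Helly property for subtrees of a tree then yields a node $t_i \in V(T_i)$ with $S \subseteq \beta_i(t_i)$. Build $T$ from the disjoint union $T_1 \uplus \cdots \uplus T_k$ by adding edges joining $t_1$ to each $t_i$ with $i \geq 2$, and let $\beta$ agree with $\beta_i$ on $V(T_i)$.

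It then remains to verify the three tree-decomposition axioms and the bag condition for $(T,\beta)$. The bag condition is inherited directly, since every bag of $T$ is a bag of some $T_i$. Coverage is immediate. The edge axiom holds because $S$ separates $G$ into the $C_i$'s, so every edge of $G$ lies inside some $G_i$ (edges within $S$ lie in all $G_i$). For the connectivity axiom, a vertex $v \in C_i$ appears only in bags of $T_i$, so its bag-set is a subtree by construction of $T_i$; a vertex $v \in S$ appears in a subtree of each $T_i$, and each of these subtrees contains $t_i$, so they are spliced into one subtree of $T$ through the star centred at $t_1$. The only subtlety in the whole argument is this gluing---one must simultaneously use Helly to ensure $S$ sits in a single bag of each sub-decomposition and check that connectivity of each shared vertex's bag-set is preserved across the splice---and both points are entirely standard.
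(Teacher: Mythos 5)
Your proof is correct and follows essentially the same route as the paper's: induction on $|V(G)|$, splitting along a clique separator, and gluing the inductively obtained tree decompositions at bags containing $S$ (whose existence both arguments derive from $S$ being a clique — you make the Helly property explicit where the paper leaves it implicit). The only cosmetic difference is that you split into all components of $G-S$ at once and glue with a star, whereas the paper takes a minimum-size separator and peels off one component at a time with a binary split; both work.
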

\begin{proof}
We prove the claim by induction on $|V(G)|$. In the base case, $G$ does not contain any clique separator, so we can create a tree decomposition 
$(T,\beta)$ where $T$ consists of a single node $t$ and $\beta(t):=V(G)$. 

Otherwise, let $S$ be a clique separator in $G$ such that $|S|$ is minimal. 
By minimality, there exists a component  $A$ of $G-S$ such that $N_G(A) = S$. 
Let $G_1 := G[A \cup S]$ and $G_2 := G-A$. By induction, for $i=1,2$, there exists a tree decomposition $(T_i,\beta_i)$ of the graph $G_i$.
Since $G[S]$ is a clique, and $S$ appears both in $G_1$ and $G_2$, for every $i=1,2$, there exists a bag $t_i \in V(T_i)$ such that
$S \subseteq \beta_i(t_i)$. To conclude, let $T$ be the tree formed by taking the disjoint union of $T_1$ and $T_2$ and adding the 
edge $t_1t_2$. Set $\beta(t):=\beta_i(t)$ if $t\in T_i$, and observe that $(T,\beta)$ is a suitable tree decomposition of $G$. 
\end{proof}

The following lemma provides the main reason for considering tree decompositions (with additional suitable properties) 
when studying the strong \EH property. 
It is considered folklore in the unweighted case, and we refer to it as the {\em central bag argument}. 

\begin{lemma}\label{lem:middle-bag}
Let $0 < \delta \leq \frac{1}{4}$ be a constant, let $G$ be a graph, let $\mu$ be a probability measure on $V(G)$,  
and let $(T,\beta)$ be a  tree decomposition of $G$. 
Then there exists an anti-adjacent pair $(P,Q)$ in $G$ with $\mu(P),\mu(Q) > \delta$, 
or a bag $\beta(t)$ with $\mu(\beta(t)) \geq \frac{1}{2}-\delta$.
\end{lemma}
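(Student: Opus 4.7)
The plan is to implement the standard \emph{central bag} argument, adapted to a probability measure on $V(G)$. First I would, for each edge $e = t_1 t_2$ of $T$, let $T_1, T_2$ denote the two components of $T-e$ containing $t_1, t_2$ respectively, and set
\[
V_i(e) := \{v \in V(G) : \{t \in V(T) : v \in \beta(t)\} \subseteq V(T_i)\}, \quad i=1,2.
\]
Using the connectedness of each index set $\{t : v \in \beta(t)\}$, one checks that $V_1(e)$ and $V_2(e)$ are disjoint and that their union equals $V(G) \setminus (\beta(t_1) \cap \beta(t_2))$; moreover the edge-coverage property of a tree decomposition forces no edges of $G$ to run between $V_1(e)$ and $V_2(e)$, since any such edge would have to sit in a single bag whose index lies in both $V(T_1)$ and $V(T_2)$. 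In particular $\mu(V_1(e)) + \mu(V_2(e)) \leq 1$, so at least one of them is at most $\tfrac12$.

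Next, I would single out the ``central'' bag by an orientation argument. Orient each edge $e = t_1 t_2$ from $t_1$ to $t_2$ whenever $\mu(V_1(e)) \leq \tfrac12$, breaking ties arbitrarily. In any orientation of a finite tree, a walk that always follows out-edges can neither revisit a vertex (it would close a cycle) nor continue indefinitely, so it must terminate at a node $t^*$ with no outgoing edges. For every neighbour $s$ of $t^*$ in $T$, let $T_s$ be the component of $T-t^*$ containing $s$ and let $V_s := \{v \in V(G) : \{t : v \in \beta(t)\} \subseteq V(T_s)\}$; then by construction $\mu(V_s) \leq \tfrac12$.

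The family $\{V_s\}_s$ ranging over neighbours $s$ of $t^*$ partitions $V(G) \setminus \beta(t^*)$, and any two distinct $V_s, V_{s'}$ form an anti-adjacent pair in $G$ by the same bag argument as above. If $\mu(\beta(t^*)) \geq \tfrac12 - \delta$ we already obtain the second outcome, so suppose instead that $\mu(V(G) \setminus \beta(t^*)) > \tfrac12 + \delta$. If some $V_s$ has $\mu(V_s) > \delta$, set $P := V_s$ and $Q := (V(G) \setminus \beta(t^*)) \setminus V_s$; then $\mu(P) > \delta$ and, combining $\mu(P) \leq \tfrac12$ with the total mass bound, also $\mu(Q) > \delta$. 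Otherwise every $V_s$ has mass at most $\delta$, and I would greedily accumulate them into a set $P$ until $\mu(P)$ first exceeds $\delta$, producing $\delta < \mu(P) \leq 2\delta$ and hence $\mu(Q) > \tfrac12 - \delta \geq \delta$ because $\delta \leq \tfrac14$. In either case $(P,Q)$ is the desired anti-adjacent pair.

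The main obstacle, though mild, is the combinatorial bookkeeping around $t^*$: verifying that the orientation always admits a sink, and that the resulting $V_s$'s genuinely partition $V(G) \setminus \beta(t^*)$ with every pair anti-adjacent. Both reduce to two textbook properties of tree decompositions (connectivity of each $\{t : v \in \beta(t)\}$ and the containment of every edge of $G$ inside some bag) together with the elementary fact that any orientation of a finite tree has a vertex of out-degree zero.
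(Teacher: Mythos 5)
Your proposal is correct and follows essentially the same route as the paper: orient each edge of $T$ so that the tail's side carries measure at most $\tfrac12$, take a sink $t^*$ as the central bag, and then either $\beta(t^*)$ is heavy or the parts hanging off $t^*$ (each of measure at most $\tfrac12$ and pairwise anti-adjacent) can be grouped into the desired anti-adjacent pair. The only cosmetic difference is that the paper finishes by invoking Lemma~\ref{lem:large-comp} on the components of $G-\beta(t^*)$, whereas you inline the same greedy accumulation argument on the sets $V_s$.
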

\begin{proof}
Extend $\beta$ to subsets of nodes of $T$ by setting $\beta(S) := \bigcup_{t \in S} \beta(t)$, for every $S \subseteq V(T)$. 
We define an orientation of the edges of $T$ as follows. 
For an edge $t_1t_2 \in E(T)$, let $T_i$ be the component of $T \setminus t_1t_2$ 
that contains $t_i$, for $i=1,2$. 
Now, orient the edge $t_1t_2$ from $t_1$ to $t_2$ if 
$\mu(\beta(V(T_1))) \leq \mu(\beta(V(T_2)))$, 
and orient the edge $t_1t_2$ from $t_2$ to $t_1$ otherwise. 
Since the tree $T$ has fewer edges than nodes, there exists a node $t \in V(T)$ of out-degree zero.
For every component $C$ of $G - \beta(t)$ there exists a component 
$T_C$ of $T - \{t\}$ such that $C \subseteq \beta(V(T_C))$, by the properties of the tree decomposition. 
Therefore, $\mu(C) \leq \mu(\beta(V(T_C))) \leq \frac{1}{2}$, since the edge between
$T_C$ and $t$ is oriented towards $t$. 

If $\mu(\beta(t)) \geq \frac{1}{2}-\delta$, then we are done, so assume otherwise. 
Note that then $\mu(V(G) \setminus \beta(t)) > \frac{1}{2} + \delta \geq 3\delta$ since, by assumption, $\delta \leq \frac{1}{4}$. 
Therefore, by Lemma~\ref{lem:large-comp}, 
there is an anti-adjacent pair $(P,Q)$ in $V(G) \setminus \beta(t)$ such that $\mu(P), \mu(Q)>\delta$, 
or there is a component $C$ in $G - \beta(t)$ with $\mu(C) \geq \mu(V(G) \setminus \beta(t)) - \delta > \frac{1}{2}$. 
Since the second outcome is a contradiction, we indeed find an anti-adjacent pair of desired size. 
\end{proof}

The previous two lemmas allow us to pass to a 
linear subset $Y\se V(G)$ of a graph $G$ with the additional property that $G[Y]$ has no clique separators, 
provided that $G$ has no anti-adjacent pair of linear size. 
In light of Theorem~\ref{thm:main-claw-free}, we search for a good characterisation of claw-free Berge graphs. 
Chv\'atal and Sbihi \cite{chvatal-sbihi} show that a claw-free graph without clique separators is Berge if and only if it 
is either ``elementary'' or ``peculiar''. 
A graph is called {\em elementary} if its edges can be coloured by two colours in such a way that edges $xy$ and $yz$ 
have distinct colours whenever $x$ and $z$ are nonadjacent. 
We decide not to give the exact definition of a peculiar graph here, but rather we point out that the vertex set of a peculiar graph 
can be partitioned into nine parts that each form a clique. 
The following is then an immediate implication of Theorem 2 in \cite{chvatal-sbihi}. 

\begin{theorem}
\label{thm:CFP-char}
Let $G$ be a graph that is claw-free, Berge and that has no clique separator. 
Then $G$ is either elementary or the vertex set $V(G)$ can be partitioned into nine sets 
$V(G)=\bigcup_{i=1}^9V_i$ such that $G[V_i]$ is a clique for each $1\leq i \leq 9$.
\end{theorem}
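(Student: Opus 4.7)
The plan is to invoke directly the structural classification of Chv\'atal and Sbihi~\cite{chvatal-sbihi} (Theorem~2 of their paper), which asserts that every claw-free Berge graph without a clique separator is either \emph{elementary} or \emph{peculiar}. The first alternative is precisely the first outcome of our statement, so nothing more needs to be said in that case. It therefore suffices to verify that if $G$ is peculiar, then $V(G)$ admits a partition into nine cliques.

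For this, I would appeal to the definition of peculiarity itself. In \cite{chvatal-sbihi}, a peculiar graph is defined by stipulating a vertex partition $V(G) = V_1 \uplus V_2 \uplus \cdots \uplus V_9$ together with a prescribed adjacency pattern between the $V_i$'s (certain pairs $(V_i,V_j)$ are complete, certain pairs are empty, and three distinguished pairs carry a specified matching-like structure), subject to the requirement that each $G[V_i]$ is a clique. Thus the nine-part clique partition is literally built into the definition of ``peculiar'', and the second alternative of our conclusion is obtained by simply forgetting the extra adjacency information between distinct parts.

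Because the statement we want to prove only asks for the existence of such a partition and not for the intricate combinatorial description of how the nine cliques interact, we deliberately discard the finer data supplied by Chv\'atal and Sbihi. This is why the present theorem is genuinely an immediate consequence of their classification and why we can afford to omit the (rather technical) full definition of a peculiar graph from the paper.

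The only thing worth flagging as a potential pitfall, although it is not a real mathematical obstacle, is terminological: one must check that the definition of ``peculiar'' used in \cite{chvatal-sbihi} really encodes the nine-clique partition as part of its hypotheses (as opposed to deriving it a posteriori). A quick inspection of their definition confirms this, so the proof amounts to a one-line citation plus the observation that in both alternatives of the Chv\'atal--Sbihi dichotomy our conclusion holds.
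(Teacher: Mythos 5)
Your proposal matches the paper's treatment exactly: the paper likewise derives this as an immediate consequence of Theorem~2 of Chv\'atal and Sbihi, noting only that the definition of a peculiar graph already includes a partition of the vertex set into nine cliques. No gap here; the argument is a correct one-line citation in both cases.
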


We now resort to a characterisation of elementary graphs due to Maffray and Reed \cite{maffray-reed} 
that suits our purposes better than the original definition. We use the following terminology from \cite{maffray-reed}.  
Let $G$ be a graph. 
We call an edge a \emph{flat edge in $G$} if it does not appear in any triangle of $G$.
Let $xy$ be a flat edge in $G$, let $X, Y$ be two disjoint sets such that $X\cap V(G)= Y\cap V(G) = \emptyset$, 
and let $B=(X, Y; E_{XY})$ be a cobipartite graph, that is a graph on vertex set $X\uplus Y$, where 
$B[X]$ and $B[Y]$ form cliques, such that there is at least one edge between $X$ and $Y$ in $B$. 
We can build a new graph $G$ obtained from $G-\{x,y\}$ and $B$ by adding all possible
edges between $X$ and $N(x) \setminus \{y\}$ and between $Y$ and $N(y) \setminus \{x\}$.
We say that $G$ is \emph{augmented along $xy$}, that $x$ and $y$ are \emph{augmented}, and that \emph{$x$ is replaced by $X$} and \emph{$y$ is replaced by $Y$}.
Intuitively, we replace the vertices $x$ and $y$ by cliques, and the edge $xy$ by a (non-empty) bipartite graph. 
It is easy to see that, if $x_1y_1$ and $x_2y_2$ are independent edges in $G$, then the graph obtained by first augmenting 
$G$ along $x_1y_1$ and then the resulting graph along $x_2y_2$ is the same as if we had first augmented $x_2y_2$ and 
then $x_1y_1$. This leads to the following definition. 

An \emph{augmentation of a graph $G$} is a graph $G'$ that is obtained by augmenting $G$ 
along the edges of some matching of flat edges in $G$.

\begin{theorem}[\cite{maffray-reed}]\label{thm:Elem-char}
A graph $G$ is elementary if and only if it is an augmentation of a line graph of a bipartite multigraph.
\end{theorem}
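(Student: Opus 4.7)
The plan is to prove each direction separately, with the forward implication requiring a reconstruction argument.

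For the $(\Leftarrow)$ direction, first consider the case of a line graph $L(H)$ of a bipartite multigraph $H$ with bipartition $(A,B)$. I would define a $2$-colouring of $E(L(H))$ by declaring an edge $ef$ of $L(H)$ (with $e,f \in E(H)$) to be \emph{red} if $e$ and $f$ share their endpoint in $A$, and \emph{blue} if they share their endpoint in $B$. Since $H$ is bipartite exactly one of these two cases occurs for any pair of adjacent edges, so the colouring is well-defined. To verify elementarity, any two monochromatic edges $ef,fg$ meeting at $f$ correspond to three edges of $H$ sharing a common endpoint in $A$ (or in $B$), hence pairwise adjacent in $H$, so $eg \in E(L(H))$. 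Next, I would show that augmenting an elementary graph along a flat edge preserves elementarity. Let $xy$ be a flat edge of an elementary graph $G$ with colouring $(C_1,C_2)$; since $xy$ lies in no triangle, $N(x) \cap N(y) = \emptyset$. Say $xy \in C_1$. The augmentation replaces $x,y$ by cliques $X,Y$, inserting a cobipartite graph $B = (X,Y;E_{XY})$. I would extend the colouring to $G'$ by colouring all edges inside $X$, inside $Y$, and all edges of $B$ red ($C_1$), and assigning each edge between $x' \in X$ and $z \in N(x) \setminus \{y\}$ the colour of $xz$ in $G$ (and symmetrically for $Y$). I would then check that every monochromatic $P_3$ in $G'$ has adjacent endpoints, crucially using $N(x) \cap N(y) = \emptyset$ so that red edges from $X$ to $Y$, from $X$ to $N(x) \setminus \{y\}$, and so on, cannot combine with blue edges of $G$ to create a conflict. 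Iterating this across the matching of flat edges yields elementarity of the full augmentation.

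For the $(\Rightarrow)$ direction, given an elementary graph $G$ with $2$-colouring $(C_1,C_2)$, the plan is to first \emph{de-augment} $G$ to obtain a line graph of a bipartite multigraph. The key structural fact driving the reconstruction is that, for every vertex $v$, the sets $N_1(v) := \{u : vu \in C_1\}$ and $N_2(v) := \{u : vu \in C_2\}$ are both cliques in $G$: indeed, if $vu,vw \in C_i$ then elementarity forces $uw \in E(G)$. Thus at every vertex the neighbourhood decomposes into two cliques, exactly as in the line graph of a bipartite graph. I would then define an equivalence relation on $V(G)$ intended to identify vertices that were created as a single ``augmented clique'', by declaring $u \equiv v$ whenever $uv$ is a flat edge witnessing an augmented pair of the appropriate type, and contracting to obtain a reduced graph $G_0$. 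In $G_0$ the maximal $C_1$-cliques and maximal $C_2$-cliques should each partition $V(G_0)$ (each vertex lying in exactly one of each), yielding the bipartition: the vertex set of $H$ is the disjoint union of these two families, and each vertex $v \in V(G_0)$ becomes an edge of $H$ joining its unique $C_1$-clique to its unique $C_2$-clique. Reinserting the contracted cliques recovers $G$ as an augmentation of $L(H)$ along a matching of flat edges.

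The main obstacle will be the $(\Rightarrow)$ direction, in particular identifying the correct equivalence classes that undo the augmentations and verifying that the resulting multigraph $H$ is genuinely bipartite. The subtlety is that a flat edge in $L(H)$ corresponds precisely to a pair of edges of $H$ meeting at a degree-$2$ vertex, so the augmented cliques in $G$ must be detectable purely from local colour-and-adjacency information around such degree-$2$ witnesses; one must also ensure that the equivalence is transitive and that contracting a class does not destroy the two-cliques-per-vertex property. I expect bipartiteness of $H$ to fall out automatically once the de-augmentation is correctly set up, because the two colour classes play structurally symmetric but mutually exclusive roles at every vertex, and the clique structure in each class is precisely what forbids odd relations between the two sides of the would-be bipartition.
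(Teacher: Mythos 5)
This theorem is not proved in the paper at all: it is quoted directly from Maffray and Reed \cite{maffray-reed}, so your attempt must stand on its own as a proof of their characterization. It does not, for two reasons. The smaller one is in your $(\Leftarrow)$ direction: the colouring you extend to the augmented graph is wrong. Since $xy$ is flat and, say, red, elementarity of $G$ already forces every other edge at $x$ and at $y$ to be blue. If you colour the edges inside $X$ red as well as the edges of $E_{XY}$, then for $x_1',x_2'\in X$ and $y'\in Y$ with $x_2'y'\in E_{XY}$ but $x_1'y'\notin E_{XY}$ (which can happen, since $E_{XY}$ is only required to be nonempty, not complete) the red path $x_1'x_2',\,x_2'y'$ has nonadjacent endpoints, violating elementarity. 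The correct choice is to colour the edges inside $X$ and inside $Y$ with the colour \emph{opposite} to that of $xy$ and only the $E_{XY}$ edges with the colour of $xy$; with that fix the case analysis does go through using $N(x)\cap N(y)=\emptyset$ and the complete joins from $X$ to $N(x)\setminus\{y\}$ and from $Y$ to $N(y)\setminus\{x\}$.

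The serious gap is the $(\Rightarrow)$ direction, which is the entire content of the theorem. Your equivalence relation is declared to hold "whenever $uv$ is a flat edge witnessing an augmented pair of the appropriate type" --- but this presupposes you already know which cliques of $G$ arose by augmentation, which is precisely what has to be extracted from the colouring. You never specify, purely in terms of the colour classes and the adjacency of $G$, which pairs of cliques are to be contracted; why the resulting relation is an equivalence; why each contracted piece is a clique, with the two pieces of a pair joined by a cobipartite graph whose contraction is a flat edge; or why the quotient is a line graph of a bipartite multigraph. The one concrete fact you establish --- that $N_1(v)$ and $N_2(v)$ are cliques for every $v$ --- only shows that $G$ is a quasi-line graph, a class strictly larger than line graphs, so it cannot by itself yield the conclusion. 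Everything difficult in Maffray and Reed's argument lives exactly in the step you defer to "one must also ensure"; as written, this direction is a restatement of the goal rather than a proof.
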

We now prove the equivalent of Lemma \ref{lem:Line-hom} for elementary graphs.
\begin{lemma}\label{lem:CFP-hom}
Let $\bheps_2 = \frac{1}{28}$.
Then, for every graph $G$ that is an elementary graph, and every probability measure $\mu$ on $V(G)$,
there exists either a clique $K$ in $G$ with $\mu(K) \geq 3\bheps_2$ or an anti-adjacent pair $(P,Q)$ in $G$
with $\mu(P),\mu(Q) \geq \bheps_2$.
\end{lemma}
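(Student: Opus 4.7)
The plan is to reduce to Lemma~\ref{lem:Line-hom} via Theorem~\ref{thm:Elem-char}. Write $G$ as an augmentation of the line graph $L(H)$ of some bipartite multigraph $H$, along a matching $M = \{x_1y_1, \ldots, x_ky_k\}$ of flat edges in $L(H)$, where each $x_i$ (resp.\ $y_i$) is replaced in $G$ by a clique $X_i$ (resp.\ $Y_i$). For each $v \in V(L(H))$ let $V_v \se V(G)$ be the set replacing $v$ in $G$: thus $V_v \in \{X_i, Y_i\}$ whenever $v \in \{x_i, y_i\}$, and $V_v = \{v\}$ otherwise. These sets partition $V(G)$, so we may define a probability measure $\mu'$ on $V(L(H))$ by $\mu'(v) := \mu(V_v)$.

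Now apply Lemma~\ref{lem:Line-hom} to $(L(H), \mu')$, noting that $\bheps_1 = 1/14 = 2\bheps_2$. This produces either a clique $K$ in $L(H)$ with $\mu'(K) \geq 3\bheps_1 = 6\bheps_2$, or an anti-adjacent pair $(P,Q)$ in $L(H)$ with $\mu'(P), \mu'(Q) > \bheps_1 = 2\bheps_2$. The key structural fact to exploit is that, for distinct $u, v \in V(L(H))$, if $uv \notin E(L(H))$ then no edge of $G$ runs between $V_u$ and $V_v$, while if $uv \in E(L(H)) \setminus M$ then $(V_u, V_v)$ is completely adjacent in $G$. Both statements follow by a short induction on the number of augmentations performed (using that augmentations commute and that each individual augmentation only adds edges incident to the newly introduced cliques).

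The anti-adjacent case is then immediate: letting $\tilde P := \bigcup_{v \in P} V_v$ and $\tilde Q := \bigcup_{v \in Q} V_v$, the pair $(\tilde P, \tilde Q)$ is anti-adjacent in $G$ with $\mu(\tilde P), \mu(\tilde Q) > 2\bheps_2 \geq \bheps_2$. For the clique case, recall that a flat edge lies in no triangle of $L(H)$, so if $K$ contains both $x_i$ and $y_i$ for some $i$ then necessarily $K = \{x_i, y_i\}$; this degenerate situation is the main subtlety, since $X_i \cup Y_i$ need not be a clique of $G$. However, $\mu(X_i) + \mu(Y_i) = \mu'(K) \geq 6\bheps_2$ forces at least one of the cliques $X_i$ or $Y_i$ in $G$ to carry mass at least $3\bheps_2$. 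Otherwise no edge of $M$ lies inside $K$, so by the structural fact above all pairs $V_u, V_w$ with distinct $u, w \in K$ are completely adjacent in $G$; combined with each $V_v$ being a clique, this shows $\tilde K := \bigcup_{v \in K} V_v$ is a clique of $G$ with $\mu(\tilde K) = \mu'(K) \geq 6\bheps_2 \geq 3\bheps_2$, completing the proof. The constant $\bheps_2 = \bheps_1/2$ is tuned precisely to absorb the two-way split of mass in the degenerate clique sub-case.
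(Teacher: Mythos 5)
Your proposal is correct and follows essentially the same route as the paper: reduce to Lemma~\ref{lem:Line-hom} via Theorem~\ref{thm:Elem-char}, push the measure down to the line graph, lift anti-adjacent pairs back directly, and in the clique case observe that a flat edge cannot lie in a clique of size at least three, so the only loss occurs when the clique is a single matched flat edge, where halving the constant suffices. Your case split (whether $K$ contains a matched flat edge) is a slightly more refined version of the paper's split on $|K|\leq 2$, but the argument and constants are the same.
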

\begin{proof}
Let $G'$ be a line graph of a bipartite multigraph $B$ such that $G$ is an augmentation of $G'$, 
which exists by Theorem~\ref{thm:Elem-char}. 
We define a probability measure $\mu'$ on $V(G')$ in the natural way by 
setting $\mu'(x):=\mu(X)$ if $x\in V(G')$ was augmented and replaced by $X$, 
and $\mu'(x):=\mu(x)$ otherwise. 


We apply Lemma \ref{lem:Line-hom} to $G'$ to find either an anti-adjacent pair $(P,Q)$ 
in $G'$ with $\mu'(P),\mu'(Q) \geq \bheps_1 = 2\bheps_2$, or a clique, say on vertex set $K$, such that $\mu'(K)\geq 3\bheps_1$. 
In the first outcome, note that some vertices of $P$ and $Q$ may have been replaced by cliques in the augmentation $G$, 
say $x_1,\ldots,x_p\in P$ are replaced by $X_1,\ldots, X_p$, and $y_1,\ldots,y_q$ are replaced by $Y_1,\ldots, Y_q$. 
Set $P_G:= (P\sm\{x_1,\ldots,x_p\})\cup X_1\cup\ldots \cup X_p$ and $Q_G:= (Q\sm\{y_1,\ldots,y_q\})\cup Y_1\cup\ldots\cup Y_q$, 
and note that $(P_G,Q_G)$ is an anti-adjacent pair in $G$ with $\mu(P_G),\mu(Q_G) \geq 2\bheps_2$. 
In the second outcome, 
we consider two cases, depending on $|K|$.
If $|K| \leq 2$, then the heaviest vertex of $K$ corresponds to a clique in $G$ of measure at least $3\bheps_1/2 = 3\bheps_2$.
Otherwise, if $K$ consists of at least three vertices, then none of its edges is flat, and therefore it remains a clique of measure at least $3\bheps_1$ in $G$.
\end{proof}

We now deduce theorem \ref{thm:main-claw-free} as a corollary. 
\begin{proof}[Proof of Theorem \ref{thm:main-claw-free}]
We prove the weighted strong Erd\H{o}s-Hajnal property with constant $\bheps_3 = \frac{1}{58}$. 
Let $G$ be a graph that is claw-free and Berge, and let $\mu$ be a probability measure on 
$V(G)$ such that for every vertex $v$ of $G$ we have that $\mu(v)\leq 1-2\delta_3$. 
In fact, we may assume that $\mu(v)< \delta_3$ for all $v\in V(G)$. 
For let $v\in V(G)$ be a vertex such that $\delta \leq \mu(v)\leq 1-2\delta_3$. Then   
$\mu(N(v))\geq \delta_3$ or $\mu(V(G)\sm N[v])\geq \delta_3$. 
That is, either $(\{v\},N(v))$ is an adjacent pair in $G$ of sufficient mass, or 
$(\{v\},V(G)\sm N[v])$ is an anti-adjacent pair in $G$ of sufficient mass, and we are done.  

Let $(T,\beta)$ be a tree-decomposition of $G$ such that every bag induces a subgraph of $G$ without a clique separator, 
which exists by Theorem \ref{thm:CFP-dec}. 
By Lemma~\ref{lem:middle-bag}, either there is an anti-adjacent pair $(P,Q)$ in $G$ with $\mu(P),\mu(Q) > \bheps_3$ and we are done,
or there is a bag $Y$ with $\mu(Y) \geq \frac{1}{2}-\bheps_3$. 
In the second case, we apply Theorem~\ref{thm:CFP-char} to $G[Y]$ to infer that $G[Y]$ is either elementary or its vertex set can be partitioned into nine cliques. 
In the latter case, $G[Y]$ contains a clique of measure at least $\frac{\mu(Y)}{9} \geq 3\bheps_3$ and we are done. 
If $G[Y]$ is elementary, then by Lemma~\ref{lem:CFP-hom}, $G[Y]$ contains an anti-adjacent pair $(P,Q)$ with $\mu(P),\mu(Q) > \frac{1}{28}\mu(Y)$
or a clique $K$ with $\mu(K) \geq \frac{3}{28}\mu(Y)$. In both cases we are done by the choice of $\bheps_3$, as $\frac{1}{28}\mu(Y) \geq \frac{0.5-\bheps_3}{28} = \bheps_3$.
\end{proof}

\section{Double hooks have the strong \EH property}\label{sec:big}
In this section, we state two technical lemmas and show how Theorem~\ref{thm:main-hook} can be derived from them. The lemmas will be proved in Sections~\ref{sec:warm-up} and~\ref{sec:structuralTheorem}.

Fix $k\geq 1$ and let $\cG:= \{G : G \mbox{ is } \dhookfam{k} \mbox{-free}\}$ i.e. the class of all graphs $G$ that are 
$\{\dhook{\ell},(\dhook{\ell})^{\mbox{c}}\}$-free, for all $\ell\geq k$. To prove Theorem \ref{thm:main-hook}, we show that $\cG$ has the {\em strong} \EH property. That is, we need to find a $\delta>0$ such that every $G\in\cG$ contains a homogeneous pair $(P,Q)$ with $|P|,|Q|\geq \delta\cdot v(G)$. 
Similarly as in~\cite{blt2015}, our starting point is to pass down to an induced subgraph that is very sparse or very dense. 
The {\em edge density} of a graph $G$ is the fraction $e(G)/\binom{v(G)}{2}$. 
The following is due to Fox and Sudakov \cite{fs2008}, improving an earlier result of R\"odl \cite{r1986}. 
\begin{theorem}\label{thm:reg}
For every $0 < \degeps < 1/2$ and every graph $H$ on at least two vertices there exists a constant 
$\delta=\delta(\degeps,H)$ such that
every $H$-free graph on $n$ vertices contains an induced subgraph on at least  
$\delta n$ vertices with edge density either at most $\degeps$ or at least $1-\degeps$.
\end{theorem}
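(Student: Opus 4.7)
The plan is to follow the classical route through Szemer\'edi's regularity lemma, due to R\"odl. Set $k := v(H)$, and let $\eta > 0$ be an auxiliary parameter chosen sufficiently small in terms of $\eps$ and $k$. I would apply the regularity lemma to the $H$-free graph $G$ on $n$ vertices with regularity parameter $\eta$, obtaining an $\eta$-regular partition $V(G) = V_0 \uplus V_1 \uplus \cdots \uplus V_T$, where $|V_0| \le \eta n$, the remaining parts have equal size $m \ge (1-\eta)n/T$, and at most $\eta \binom{T}{2}$ of the pairs $(V_i,V_j)$ are $\eta$-irregular.

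Next, I would build a $3$-edge-coloured reduced graph $R$ on vertex set $[T]$: colour the edge $ij$ \textbf{sparse} if $(V_i,V_j)$ is $\eta$-regular of density at most $\eps/3$, \textbf{dense} if it is $\eta$-regular of density at least $1-\eps/3$, and \textbf{medium} if it is $\eta$-regular with density strictly between $\eps/3$ and $1-\eps/3$; the irregular pairs remain uncoloured and will be discarded by passing to a slightly smaller set of parts.

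The heart of the argument, and the step I expect to be the main obstacle, is ruling out large \emph{medium} cliques in $R$. This is achieved via the standard induced counting/embedding lemma for regular pairs: if $R$ contains a medium clique on $k=v(H)$ vertices, then the $k$ corresponding parts of $G$ are pairwise $\eta$-regular with densities bounded in $[\eps/3,\,1-\eps/3]$, so for $\eta$ small enough depending only on $\eps$ and $k$, every fixed graph on $k$ vertices (and in particular $H$ itself) appears as an induced subgraph of $G$, contradicting $H$-freeness. Hence the medium subgraph of $R$ is $K_k$-free.

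Finally, I would choose an integer $r$ with $1/r < \eps/3$ and invoke Ramsey's theorem for three colours: provided $T$ is sufficiently large (which we may arrange by taking the minimum partition size in the regularity lemma appropriately), $R$ contains a monochromatic $K_r$ in the sparse or dense colour class, since medium monochromatic cliques of size $k$ (let alone $r$) are impossible. Let $X$ be the union of the corresponding $r$ parts; then $|X| \ge rm \ge \delta n$ for an explicit $\delta=\delta(\eps,H) > 0$. A short double-counting closes the argument: in the sparse case, between-part edges contribute at most $\eps/3$ to the density of $G[X]$ and within-part edges contribute at most $1/r < \eps/3$, so $G[X]$ has edge density at most $\eps$; the dense case is symmetric via complementation, using that $G$ is also free of $H^\compl$-induced copies only trivially (in fact we simply relabel edges and non-edges inside the regular pairs).
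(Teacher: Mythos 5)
Your argument is essentially correct, but note that the paper does not prove this statement at all: it is quoted as a black box from Fox and Sudakov \cite{fs2008}, who improved the original result of R\"odl \cite{r1986}. What you have written is, in outline, R\"odl's original proof via Szemer\'edi's regularity lemma: partition, three-colour the reduced graph by pair density, kill medium cliques of size $v(H)$ with the induced embedding lemma, apply three-colour Ramsey, and double-count inside the union of a sparse or dense monochromatic clique of parts. This is a perfectly legitimate route and yields the statement as written. The trade-off is quantitative: the regularity lemma forces $\delta$ to be of tower type in $1/\eps$, whereas Fox and Sudakov obtain the same conclusion by a regularity-free iterative argument with $\delta$ only exponentially small in a polynomial of $1/\eps$ and $v(H)$. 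For the purposes of this paper only the existence of some positive $\delta(\eps,H)$ matters, so either proof suffices.

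Two points in your sketch should be tightened. First, ``discarding'' the irregular pairs is not just relabelling: you must pass to a subfamily of parts that are \emph{pairwise} $\eta$-regular. Since the graph of irregular pairs on $[T]$ has at most $\eta\binom{T}{2}$ edges, it has an independent set of size about $1/(2\eta)$, which is a constant depending only on $\eta$; you then need $1/(2\eta)$ (not $T$ itself) to exceed the three-colour Ramsey number $R(r,r,v(H))$, which is arranged by choosing $\eta$ small enough. Second, the closing remark about $G$ being ``free of $H^{\compl}$-induced copies'' is a red herring and should be deleted: the dense case needs no hypothesis on $H^{\compl}$, since one simply counts non-edges, bounding their density by $\eps/3$ between parts plus $1/r$ within parts, exactly as in the sparse case.
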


%
%

In case the induced subgraph is particularly sparse 
we find a special structure within it. 
%
Let $G$ be a graph, and let $\cS=(A,B,S)$ be a triple of non-empty subsets of $V(G)$. We call the pair $(G,\cS)$ an {\em $\eps$-structured pair} if 
\begin{enumerate}
\item $A \uplus B \uplus S = V(G)$, i.e., the sets $A, B, S$ form a partition of $V(G)$;
\item $G[A]$ and $G[B]$ are connected; 
\item $N(A) = N(B) = S$, in particular, there is no edge between $A$ and $B$; and 
\item for every $v \in V(G)$ it holds that $|N_S[v]| \leq \eps |S|$. 
\end{enumerate}
Note that if $(G,\cS)$ is $\eps$-strucutred, then it is $\eps'$-structured for every $\eps'\geq\eps$.

\begin{lemma}\label{lem:SparseStructured}
Fix $0<\eps<\frac{1}{10}$ and let $G$ be a graph on $n$ vertices  
such that every vertex has at most $\eps n$ neighbours. 
Then 
\begin{itemize}
\item[$(a)$] there exists a homogeneous pair $(P,Q)$ in $G$ with $|P|, |Q|\geq  n/10$; or
\item[$(b)$] there exist subsets $A,B,S\se V(G)$ such that $|S|\geq n/10$ and the pair $(G[A\cup B\cup S],(A,B,S))$ is a $10\eps$-structured pair. 
\end{itemize}
\end{lemma}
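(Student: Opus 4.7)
My plan is to run a BFS in $G$ and analyse the layer structure, using Lemma~\ref{lem:large-comp} to split between the two outcomes. First, I reduce to the connected case: applying Lemma~\ref{lem:large-comp} with the uniform probability measure on $V(G)$, $X=V(G)$ and $\delta=1/10$, either we obtain an anti-adjacent pair of sizes $> n/10$, giving outcome~(a), or $G$ has a connected component $C$ of size $\geq 9n/10$ in which to continue (structured triples found inside $C$ remain valid in $G$). Next, fix $v_0\in V(G)$ and form BFS layers $L_0=\{v_0\},L_1,\ldots,L_d$. Set $r_1:=\min\{r:|L_{\leq r}|\geq n/10\}$ and $r_2:=\max\{r:|L_{\geq r}|\geq n/10\}$; a short monotonicity check gives $r_1\leq r_2$. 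If $r_2\geq r_1+2$, then since $G$ has no edges between BFS layers whose indices differ by at least two, the pair $(L_{\leq r_1},L_{\geq r_2})$ is anti-adjacent with both sides of size $\geq n/10$, delivering outcome~(a).

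Otherwise $r_2\in\{r_1,r_1+1\}$ and the bounds $|L_{<r_1}|<n/10$, $|L_{>r_2}|<n/10$ force the ``middle'' $L_{r_1}\cup L_{r_1+1}$ to contain more than $8n/10$ vertices; we aim for outcome~(b). Take tentatively $A_0=L_{<r_1}$ (a connected BFS ball) and $S_0=L_{r_1}$; by BFS, $N_G(A_0)\cap (V\setminus A_0) = L_{r_1}=S_0$. To supply $B$ and sharpen $S$, apply Lemma~\ref{lem:large-comp} to $L_{>r_1}$ (provided its mass exceeds $3/10$): either an anti-adjacent pair in $L_{>r_1}$ yields outcome~(a), or we obtain a giant component $B^*\subseteq L_{>r_1}$ of size at least $|L_{>r_1}|-n/10$. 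Let $S^*:=L_{r_1}\cap N(B^*)$ and absorb the ``inactive'' vertices $L_{r_1}\setminus S^*$ into $A_0$: each of them has a neighbour in $L_{r_1-1}\subseteq A_0$, so connectedness of $G[A]$ is preserved. One then verifies that $(A,B^*,S^*)$ satisfies the structural axioms, and the degree condition $|N_S[v]|\leq 10\eps|S|$ follows immediately from the hypothesis $|N[v]|\leq\eps n + 1$ once $|S^*|\geq n/10$.

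\emph{The main obstacle}, and the most delicate subcase, is when $|L_{>r_1}|\leq 3n/10$ (so Lemma~\ref{lem:large-comp} cannot be applied to it) or when the above procedure yields $|S^*|<n/10$; then essentially all of the mass concentrates in the single layer $L_{r_1}$, of size at least $6n/10$. I would handle this by applying Lemma~\ref{lem:large-comp} to the induced subgraph $G[L_{r_1}]$ itself, obtaining either a large anti-adjacent pair inside $L_{r_1}$ (outcome~(a) in $G$) or a giant component $C^*\subseteq L_{r_1}$ with $|C^*|\geq |L_{r_1}|-n/10 > 5n/10$. Since $C^*$ is a full component of $G[L_{r_1}]$, its $G$-neighbourhood $N_G(C^*)$ is confined to $L_{r_1-1}\cup L_{r_1+1}$, a set of size less than $2n/10$. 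Write $V(G)=C^*\sqcup N_G(C^*)\sqcup R$: if $|R|\geq n/10$ then $(C^*,R)$ is an anti-adjacent pair of the required size (outcome~(a)); otherwise $|N_G(C^*)|\geq n/10$, and we take $B:=C^*$, $S:=N_G(C^*)$, and build $A$ from $R$ together with any vertices of $S$ that must be absorbed in order to ensure $G[A]$ connected, $S\subseteq N(A)$, and no $A$-$B$ edges.

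The technical heart of the proof lies in tracking constants through this case analysis so that every produced set has size at least $n/10$ measured against the \emph{original} $n$; in particular, one cannot afford to simply recurse on a giant subgraph, because the max-degree-to-vertex-count ratio would grow and eventually exceed $1/10$. The approach above sidesteps this by always keeping $A$ as a small BFS ball (or $R$), $B$ as a big connected component, and $S$ as a separating layer of the original graph, and by verifying the connectedness and boundary invariants at each absorption step.
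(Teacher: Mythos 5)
Your reduction to a large component, the choice of $r_1,r_2$, and the case $r_2\ge r_1+2$ are all fine, but the argument breaks at the step where you claim that $|S^*|<n/10$ forces ``essentially all of the mass'' into the single layer $L_{r_1}$, giving $|L_{r_1}|\ge 6n/10$. That implication is false: nothing in the hypotheses bounds any individual BFS layer from below, so $L_{r_1}$ can be tiny while $L_{r_1+1}$ is huge. Concretely, take $\eps=0.09$, $|L_0|=1$, $|L_1|=|L_2|=n/20$, $|L_3|\approx 0.85n$ with $G[L_3]$ connected (all degree constraints are satisfiable for large $n$). Then $r_1=2$, $r_2=3$, $|L_{>r_1}|>3n/10$, $B^*=L_3$, and $S^*\se L_2$ has at most $n/20<n/10$ vertices, yet $|L_{r_1}|=n/20$, so your fallback of applying Lemma~\ref{lem:large-comp} to $G[L_{r_1}]$ is vacuous. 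The same weakness resurfaces in your final dichotomy: from $V(G)=C^*\sqcup N_G(C^*)\sqcup R$ and $|R|<n/10$ you conclude $|N_G(C^*)|\ge n/10$, but this fails whenever $|C^*|>8n/10$ (e.g.\ $G$ consisting of $v_0$, a layer $L_1$ of size $\eps n$, and a connected $L_2$ on the remaining $\approx(1-\eps)n$ vertices). In both situations every candidate separator your construction offers is confined to a single BFS layer or to the $G$-boundary of one giant piece, and that set can simply be too small; the correct separator has to cut \emph{through} the large layer, which a layer-by-layer scheme cannot produce, and recursing into the giant piece degrades the degree-to-size ratio, as you yourself note.

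The paper's proof avoids this by abandoning layers: it grows a connected set $A$ from a single vertex of the giant component \emph{one vertex at a time} until $|N[A]|$ first exceeds $n/2$. The maximum-degree hypothesis gives $n/2<|N[A]|<n/2+\eps n$; the largest component $B$ of $G_1-N[A]$ has size at least $n/10$ by Lemma~\ref{lem:large-comp} (else outcome $(a)$), whence $|A|<n/10$ (else $(A,B)$ is an anti-adjacent pair) and so $|N(A)|\ge 2n/5$; finally $|N(A)\sm N[B]|<n/10$ (else $(B,N(A)\sm N[B])$ is an anti-adjacent pair), leaving $S=N(A)\cap N(B)$ of size at least $3n/10$. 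The stopping rule ``$|N[A]|>n/2$'' is the essential ingredient you are missing: it supplies a \emph{lower} bound on the size of the separator $N(A)$, which BFS layering cannot provide. To salvage your approach you would need to replace $L_{r_1}$ by such a stopped neighbourhood, at which point you have reproduced the paper's argument.
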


\begin{proof}
Assume that there is no homogeneous pair $(P,Q)$ with $|P|, |Q| \geq n/10$ in $G$, 
and let $G_1$ be the largest component of $G$.  
By Lemma~\ref{lem:large-comp}, $G_1$ has at least $9n/10$ vertices. 
%
Pick an arbitrary vertex $x_A$ in $G_1$, and set $A:=\{x_A\}$. Now add vertices one by one to $A$, keeping $A$ connected, until $|N[A]|$ exceeds $n/2$. 
Then, $|N[A]|\leq n/2 +\eps n< 3n/5$, 
since we add at most $\eps n$ vertices to $N[A]$ in each step. 
Thus, $G_1- N[A]$ has at least $3n/10$ vertices. 
Let $B$ be the largest component in $G_1- N[A]$. 
By Lemma~\ref{lem:large-comp}, we may assume that $|B|\geq n/10$ 
and therefore we must also have that $|A|< n/10$ (otherwise $(A,B)$ is an anti-adjacent pair of sufficient size). 
Thus, $|N(A)|\geq 2n/5$. 
Furthermore, $|N(A)\sm N[B]|< n/10$, since otherwise, $(B, N(A)\sm N[B])$ is an anti-adjacent pair of sufficient size. 
Setting $S:= N(A)\cap N(B)$, we see that $|S|\geq 3n/10$ by the above discussion, and 
for every $v\in V(G)$ we have that $|N_S(v)|\leq |N(v)|\leq \eps n\leq 10\eps |S|$. Furthermore it is easy to see that in $G' = G[A \cup B \cup S]$ we have $N(A) = N(B) = S$ and that $G'[A]$ and $G'[B]$ are connected.
Thus the pair $(G[A\cup B\cup S],(A,B,S))$ is a $10\eps$-structured pair. 
\end{proof}

The following theorem is the crucial step in our proof. It states that within an $\eps$-structured pair $(G,(A,B,S))$, 
we either find the desired homogeneous pair of linear size, 
or a very structured subset $\hat{S}\se S$ of linear size,    
or an $\ell$-hook for some $\ell\geq k$ which we can potentially extend to a double $\ell$-hook. 
Recall that an {\em active $\ell$-hook} in a graph $G$
is a pair $(X,R)$, where $X,R \subseteq V(G)$,
$X \cap R = \emptyset$, $G[X]$ is isomorphic to an $\ell$-hook,
$G[R]$ is connected, and $N(R) \cap X$ consists of exactly one
vertex, being the active vertex of the $\ell$-hook $G[X]$.

\begin{theorem}\label{thm:tech-hooks}
For every $k \geq 0$, there exists a 
constant $\eps_0$ such that for every $0<\eps\leq\eps_0$ and  
in every $\eps$-structured graph $(G,(A,B,S))$ there exists either 
\begin{enumerate}
\item an anti-adjacent pair $(P,Q)$ in $G$ with $P,Q \subseteq S$, $|P|,|Q| \geq \eps |S|$; or 
\item an active $\ell$-hook $(X,R)$ in $G$ with $\ell \geq k$, $R \subseteq S$, and $|R| \geq 2\eps |S|$; or
\item a subset $\hat{S} \subseteq S$ with $|\hat{S}| \geq  |S|/5$ 
and a partition $\hat{S} = S_1 \uplus S_2 \uplus \ldots \uplus S_m$, for some $m\geq 2$, 
such that
\begin{enumerate}
\item $|S_i|\leq \eps |S|$ for every $1\leq i \leq m$; 
\item every set $S_i$ is a module of $G[\hat{S}]$; and 
\item the quotient graph of this partition of the vertex set
of $G[\hat{S}]$ is a claw-free Berge graph.
\end{enumerate}
\end{enumerate}
\end{theorem}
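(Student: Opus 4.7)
The plan is to establish outcome (3) under the assumption that outcomes (1) and (2) fail, for $\eps_0$ chosen small enough at the end. So I assume that $G[S]$ admits no anti-adjacent pair with both parts of size at least $\eps|S|$, and no active $\ell$-hook $(X,R)$ with $\ell\ge k$, $R\subseteq S$, $|R|\ge 2\eps|S|$; the goal is then to output a partition as in~(3).

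The first step is to set up a modular partition of $S$. I define the twin equivalence $s\equiv s'$ on $S$ by $N_{G[S]}(s)\setminus\{s'\}=N_{G[S]}(s')\setminus\{s\}$; each equivalence class is a module of $G[S]$. After discarding the few classes of size above $\eps|S|$ (using Lemma~\ref{lem:large-comp} to argue that these carry little mass before the failure of (1) can be used) and cleaning up negligible boundary vertices, I obtain $\hat{S}\subseteq S$ with $|\hat{S}|\ge|S|/5$ together with a partition $\hat{S}=S_1\uplus\cdots\uplus S_m$ into modules of $G[\hat{S}]$ of size at most $\eps|S|$. Let $Q$ denote the quotient graph; it remains to prove that $Q$ is both claw-free and Berge.

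Suppose for contradiction that $Q$ contains an induced claw, an odd hole, or an odd antihole. Picking one representative from each of the involved modules yields an induced copy of that obstruction inside $G[\hat{S}]$; from this copy I extract an initial hook head $X_0$ (four vertices inducing a $0$-hook in $G$) by combining three of the module representatives with an auxiliary vertex $\alpha\in A\cup B$, chosen to be adjacent to exactly one of the three. Such an $\alpha$ exists because $N(A)=N(B)=S$ and, crucially, because I am free to permute representatives inside their own modules in order to avoid bad adjacencies with candidate vertices in $A\cup B$. I then apply the Bousquet--Lagoutte--Thomass\'e path-growing argument of~\cite{blt2015} inside the largest component of $G[S\setminus N[X_0]]$: by the sublinear-degree hypothesis $|N[X_0]|=O(\eps|S|)$, and by the failure of (1) together with Lemma~\ref{lem:large-comp} this component has size $(1-O(\eps))|S|$. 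Growing an induced $k$-vertex path inside it produces an active $\ell$-hook with $\ell\ge k$ while leaving a residual connected reservoir of size $\ge 2\eps|S|$ untouched, contradicting the failure of~(2).

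The main obstacle is the construction of the hook head $X_0$ in the odd-hole and odd-antihole cases of~$Q$, where a parity-sensitive case analysis is needed to identify a $P_3$ among the representatives that is admissible as the backbone of a $0$-hook, compatible both with the pendant edge and with a large connected reservoir inside $S$. The pure claw case is expected to go along the lines of the warm-up Theorem~\ref{thm:tech-claw-free} proved in Section~\ref{sec:warm-up}. Throughout, the modularity of the $S_i$ plays a decisive role: since each $S_i$ is a module of $G[\hat{S}]$, representatives may be swapped freely without disturbing adjacencies to other modules, which provides exactly the flexibility needed to pin down $\alpha$ and to disentangle $X_0$ from the growing region inside $S$.
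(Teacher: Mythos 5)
There is a genuine gap, and it sits at the heart of the argument: your modular partition is the wrong one, and as a consequence the claw-free/Berge claim for the quotient has nothing to push against. You take the $S_i$ to be twin classes of the graph $G[S]$ itself. These classes can all be singletons (e.g.\ whenever $G[S]$ has no twins), in which case your quotient graph is just $G[S]$, and you would then need to show that $G[S]$ itself is claw-free and Berge using only the failure of outcomes (1) and (2). The paper instead partitions $\hat S$ by the relation $N_{A\cup B}(x)=N_{A\cup B}(y)$; the fact that these classes are modules of $G[\hat S]$ is not a definition but a theorem (the culmination of Sections~\ref{sec:warm-up} and~\ref{sec:structuralTheorem}), proved by showing that, after a filtering step, adjacency between $x,y\in\hat S$ is completely determined by how $N_A(x),N_B(x)$ compare with $N_A(y),N_B(y)$ (equal on exactly one side, strictly nested in opposite directions, or incomparable on both sides). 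Bergeness of the quotient then follows by a purely combinatorial parity argument on these comparison relations around an odd hole or antihole --- no hook is built at that stage --- while claw-freeness of the quotient and the comparison properties themselves are each extracted from a generic reachability/measure claim applied inside $A$ or $B$.

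Your sketch of the contradiction also has two unsupported steps. First, the auxiliary vertex $\alpha\in A\cup B$ adjacent to exactly one of three chosen representatives need not exist: three vertices of $S$ can have identical neighbourhoods in $A\cup B$ while lying in distinct twin classes of $G[S]$, and swapping representatives within a twin class of $G[S]$ gives you no control whatsoever over neighbourhoods in $A\cup B$ (twins in $G[S]$ are not twins in $G$). Establishing that some vertex of $A\cup B$ distinguishes the relevant vertices in the right way is precisely the content of the neighbourhood analysis you are skipping. Second, to grow an active hook you need the non-active vertices of $X_0$ to be anti-adjacent to the \emph{entire} connected reservoir, not merely that the reservoir avoids $N[X_0]$ at the outset; the paper achieves this by routing the extension through $A$ (resp.\ $B$) along a shortest path inside a set $D$ that is anti-adjacent to the whole head $Z$, and only afterwards growing inside $S$. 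Finally, your discarding step is also shaky: a twin class that is a large clique cannot be removed by invoking the absence of a large anti-adjacent pair, so the bound $|\hat S|\ge|S|/5$ does not follow as claimed.
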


We delay the proof of this theorem until Section \ref{sec:structuralTheorem}. 
Informally, the idea is as follows. 
Let  $(G,(A,B,S))$ be an $\eps$-structured pair and assume that $G$ does not contain a homogeneous pair of linear size or an active $\ell$-hook. 
After some filtering, we partition the vertices in $S$ into equivalence classes 
according to their neighbourhoods in $A\cup B$. 
Assuming certain subgraphs like the hook are forbidden in $G$,
it turns out that edges and non-edges between pairs of vertices in $S$ correspond to a certain behaviour of the neighbourhoods of those vertices in $A$ and $B$.
 This allows us to deduce that 
the equivalence classes of the partition on $S$ are in fact modules. Furthermore, the quotient graph turns out to have an even more restricted structure in terms of the induced subgraphs that are forbidden. 

We believe that these methods can be of further use to approach similar problems. 
Since the proof of Theorem \ref{thm:tech-hooks} is rather technical, we  present the following as a warm-up in Section \ref{sec:warm-up} to illustrate our methods, although we will need many of the lemmas from Section \ref{sec:warm-up} later.  

\begin{theorem}\label{thm:tech-claw-free}
For every $1/10$-structured graph $(G,(A,B,S))$ 
such that $G$ is both claw-free and $C_5$-free,
there exists 
a subset $\hat{S} \subseteq S$ with $|\hat{S}| \geq |S|/5$
and a partition $\hat{S} = S_1 \uplus S_2 \uplus\ldots \uplus S_\ell$
such that:
\begin{enumerate}
\item every set $S_i$ is contained in a neighbourhood of some vertex in $A$;
\item every set $S_i$ is a module of $G[\hat{S}]$;
\item the quotient graph of this partition of the vertex set
of $G[\hat{S}]$ is a line graph of a triangle-free graph.
\end{enumerate}
\end{theorem}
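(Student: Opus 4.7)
My plan is to exploit the claw-freeness of $G$ to control how $S$ is organised by the neighbourhoods $N_A(s)$ for $s \in S$, and then to use the $C_5$-free hypothesis to upgrade this local structure into a global module decomposition.

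First, since $G$ is claw-free, for every $a \in A$ the set $N_S(a)$ has independence number at most $2$: any independent triple there would form a claw with centre $a$. Combined with the bound $|N_S(v)| \le |S|/10$ from the $1/10$-structured condition and the fact that every $s \in S$ has a neighbour in both $A$ and $B$, this already gives very restricted local behaviour. I would then prove a small collection of forbidden-configuration lemmas of the following flavour: if $s_1, s_2 \in N_S(a)$ are non-adjacent, then their $B$-neighbourhoods interact with each other in a constrained way, since otherwise one can typically exhibit an induced $C_4$ through $a, s_1$, a common $B$-neighbour, $s_2$, or an induced $C_5$ through $a, s_1, b_1, b_2, s_2$ when $b_1 \in N_B(s_1)$, $b_2 \in N_B(s_2)$, $b_1b_2 \in E(G)$, and the cross non-edges $s_1 b_2, s_2 b_1$ hold.

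Next, I would define an equivalence relation on (most of) $S$: put $s \sim s'$ iff $N_A(s) = N_A(s')$, possibly refined by also requiring $N_B(s) = N_B(s')$, after discarding at most $\tfrac{4}{5}|S|$ vertices that do not fit the forthcoming structure so that the remaining set $\hat S$ satisfies $|\hat S| \ge |S|/5$. Let $S_1, \ldots, S_\ell$ be the equivalence classes of $\hat S$. For each $S_i$, pick any vertex $a_i \in N_A(S_i)$; then $S_i \subseteq N(a_i)$, giving condition~(1). The module condition~(2) follows because two vertices of the same class are, by the definition of $\sim$, indistinguishable from outside $S$, while the forbidden-configuration lemmas from the previous step rule out the possibility that they could differ within $\hat S$: any such difference would manifest as an edge to a third class and could be shown to produce either a forbidden claw or an induced $C_5$ in $G$.

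For the quotient being a line graph of a triangle-free graph, I plan to build an auxiliary multigraph $H$ whose vertices come from a subset of $A$ and whose edges are exactly the classes $S_i$, with each $S_i$ assigned a canonical pair of endpoints in $A$ extracted from $N_A(S_i)$. By design, two classes will share an endpoint in $H$ precisely when they form an adjacent pair in the quotient, so the quotient is $L(H)$. Triangle-freeness of $H$ must then be verified separately, and this is where I expect the main obstacle to lie: a triangle in $H$ would correspond to three mutually adjacent classes whose designated $A$-endpoints form a triangle, and one has to show that such a configuration forces either an induced claw at one of the three $A$-vertices or an induced $C_5$ running through the $A$-triangle together with vertices from the three classes and some of their $B$-neighbours. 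Carrying out this case analysis while juggling both forbidden induced subgraphs is the technical heart of the theorem.
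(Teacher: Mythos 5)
Your high-level plan follows the same skeleton as the paper's argument (classify $S$ by neighbourhoods in $A\cup B$, pass to a large subset where the classes are modules, identify the quotient as a line graph of a triangle-free graph), but two essential steps are missing or would fail as described. First, the module property. Your classes are defined by equality of neighbourhoods in $A\cup B$, which makes them indistinguishable from $A\cup B$ but says nothing about vertices of other classes inside $\hat S$. The paper closes this gap with a specific structural dichotomy: after filtering, one proves that for a \emph{non}-edge $xy$ in $S$ the sets $N_A(x),N_A(y)$ are either equal or incomparable under inclusion (this requires an antichain argument -- no three nested neighbourhoods along non-edges -- and costs a further factor of $4$ in the size of $\hat S$, via a four-way partition by in/out-degree in an auxiliary digraph), while for an \emph{edge} $xy$ one of $N_A(x)\setminus N_A(y)$, $N_B(x)\setminus N_B(y)$ is empty (a one-line claw argument). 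Together these show that adjacency inside $\hat S$ is a function of the pair $(N_A(\cdot),N_B(\cdot))$, which is what actually yields the modules. Your proposal gestures at ``forbidden-configuration lemmas'' but never identifies this dichotomy, and without it the claim that same-class vertices ``could not differ within $\hat S$'' is unsupported. (Also note that an induced $C_4$ is not forbidden here, so any configuration lemma must terminate in a claw or a $C_5$.)

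Second, your construction of the underlying multigraph $H$ on vertices of $A$, with each class $S_i$ an edge whose endpoints come from $N_A(S_i)$, does not correctly represent the quotient. Two classes can have intersecting (but incomparable) $A$-neighbourhoods and still be non-adjacent in the quotient -- indeed that is exactly the generic non-edge case -- so ``sharing a designated $A$-endpoint'' cannot coincide with adjacency, whatever canonical choice of endpoints you make. The paper avoids this entirely: it shows the quotient is claw-free (inherited from $G$) and \emph{diamond-free} (a short case analysis on the $P_3$-types in the quotient, where $C_5$-freeness is used to kill the inclusion-type cases), and then invokes the known characterization that (claw,\,diamond)-free graphs are precisely line graphs of triangle-free graphs. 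If you want to keep an explicit line-graph representation, you would need to build the host graph abstractly from the relations $R^{=}_A$, $R^{=}_B$, $R^{\subsetneq}$, $R^{\supsetneq}$ governing quotient edges, not from literal vertices of $A$; otherwise you should replace that step with the diamond-exclusion route.
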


We now prove our main result. 

\begin{proof}[Proof of Theorem \ref{thm:main-hook}]
Note that if the theorem holds for $k=r$ then it holds for all $1 \leq k \leq r$. and so it is sufficient to prove the theorem for all $k \geq 2$.
Thus, fix $k\geq 2$ and let $\cG:= \{G : G \mbox{ is } \dhookfam{k}\mbox{-free}\}$ and set $\eps_0=\eps_0(k)$ to be the constant from Theorem \ref{thm:tech-hooks}. We shall prove the following claim.

\begin{claim} 
Suppose $G \in \cG$ has $n$ vertices and maximum degree $\eps_0 n / 100$.
Then either $G$ has a homogenous pair $(P,Q)$ where 
$|P|,|Q| \geq \eps_0 n / 3000$ or we can find an active $\ell$-hook $(X,R)$ for some $\ell \geq k$, where $|R|\geq \eps_0 n/ 50$.
\end{claim}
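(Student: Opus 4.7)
The plan is to chain together the three main tools that have been set up. First I would apply Lemma~\ref{lem:SparseStructured} to $G$ with parameter $\eps := \eps_0/100$; this is permissible since we may assume $\eps_0 \leq 1$ (smaller $\eps_0$ only weakens Theorem~\ref{thm:tech-hooks}) so $\eps < 1/10$, and the maximum-degree hypothesis of the claim matches the degree hypothesis $\eps n$ of the lemma. One possibility is an immediate homogeneous pair of size at least $n/10 \geq \eps_0 n/3000$, and we are done. The other possibility is a triple $(A,B,S)$ with $|S| \geq n/10$ such that $(G[A \cup B \cup S],(A,B,S))$ is a $(10\eps) = (\eps_0/10)$-structured pair. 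Since $\eps_0/10 \leq \eps_0$, Theorem~\ref{thm:tech-hooks} applies to this pair with parameter $\eps_0/10$.

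Feeding the structured pair into Theorem~\ref{thm:tech-hooks} produces three possible outcomes. Outcome~1 gives an anti-adjacent pair inside $S$ with each side of size at least $(\eps_0/10)|S| \geq \eps_0 n/100 \geq \eps_0 n/3000$, which is more than enough. Outcome~2 gives an active $\ell$-hook $(X,R)$ with $\ell \geq k$ and $|R| \geq 2(\eps_0/10)|S| \geq \eps_0 n/50$, exactly as required. The substantive case is Outcome~3.

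In Outcome~3 we obtain a set $\hat S \subseteq S$ with $|\hat S| \geq |S|/5 \geq n/50$, partitioned into modules $S_1,\ldots,S_m$ of $G[\hat S]$ with each $|S_i| \leq (\eps_0/10)|S|$, and such that the quotient graph $G_q$ is claw-free and Berge. Here I would invoke Theorem~\ref{thm:main-claw-free} on $G_q$ equipped with the natural weighting $\mu(S_i) := |S_i|/|\hat S|$. The non-degeneracy hypothesis $\mu(S_i) \leq 1 - 2\bheps_3$ (with $\bheps_3 = 1/58$) follows from the bound $\mu(S_i) \leq (\eps_0/10)|S|/(|S|/5) = \eps_0/2$, and is secured by requiring $\eps_0$ to be sufficiently small at the outset. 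Theorem~\ref{thm:main-claw-free} then yields a homogeneous pair $(\mathcal P, \mathcal Q)$ in $G_q$ with $\mu(\mathcal P), \mu(\mathcal Q) \geq 1/58$. Pulling back via $P := \bigcup_{S_i \in \mathcal P} S_i$ and $Q := \bigcup_{S_j \in \mathcal Q} S_j$, the fact that each $S_i$ is a module of $G[\hat S]$ guarantees that $(P,Q)$ is a homogeneous pair in $G$, and $|P|, |Q| \geq |\hat S|/58 \geq n/2900 \geq \eps_0 n/3000$ (using $\eps_0 \leq 1$).

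The principal obstacle has already been offloaded to the deferred Theorem~\ref{thm:tech-hooks}; granted that structural result, the derivation of the claim is essentially constant-chasing. The two points that require a little care are (a) absorbing the factor of $10$ between the $10\eps$-structuredness produced by Lemma~\ref{lem:SparseStructured} and the $\eps$-structuredness required by Theorem~\ref{thm:tech-hooks}, handled by initialising $\eps := \eps_0/100$; and (b) ensuring that the weighting $\mu$ on $G_q$ is non-degenerate enough to invoke Theorem~\ref{thm:main-claw-free}, which follows from the size bound $|S_i| \leq \eps|S|$ in Outcome~3 combined with $|\hat S| \geq |S|/5$.
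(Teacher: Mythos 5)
Your proposal is correct and follows essentially the same route as the paper: Lemma~\ref{lem:SparseStructured} with $\eps=\eps_0/100$ to obtain an $(\eps_0/10)$-structured pair, then Theorem~\ref{thm:tech-hooks}, with the third outcome handled by weighting the module partition and invoking Theorem~\ref{thm:main-claw-free} on the claw-free Berge quotient graph before pulling the homogeneous pair back through the modules. The constant-chasing (including the non-degeneracy check $\mu(S_i)\leq \eps_0/2\leq 1-2\bheps_3$) matches the paper's.
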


\begin{proof}[Proof of Claim]  
By Lemma \ref{lem:SparseStructured},  
either there is a homogeneous pair $(P,Q)$ in $G$ with $|P|, |Q|\geq n/10$ (in which case we are done) or there is an $\frac{\eps_0}{10}$-structured pair $(G [A\cup B\cup S], (A,B,S))$ with $|S|\geq n/10$. 
 
Set $G_1:= G[A\cup B\cup S]$. 
By Theorem~\ref{thm:tech-hooks}, there is either
1.~an anti-adjacent pair $(P,Q)$ in $S$ with $|P|,|Q| \geq \frac{\eps_0}{10} |S| \geq \frac{\eps_0}{100}n$ (in which case we are done); 
or 2.~an active $\ell$-hook $(X,R)$ with $\ell\geq k$, $R\se S$ and $|R|\geq 2 \frac{\eps_0}{10} |S|$; 
or 3.~a subset $\hat{S} \se S$ with $|\hat{S}| \geq |S|/5$ and a partition 
$\hat{S} = S_1 \uplus S_2 \uplus \ldots \uplus S_m$, for some $m\geq 2$, such that 
\begin{itemize}
\item[$(a)$] $|S_i|\leq \frac{\eps_0}{10} |S| \leq \frac{\eps_0}{2}|\hat{S}|$ for every $1\leq i \leq m$; 
\item[$(b)$] every set $S_i$ is a module of $G[\hat{S}]$; and 
\item[$(c)$] the quotient graph of this partition of the vertex set 
of $G[\hat{S}]$ is a claw-free Berge graph. 
\end{itemize}

In the third outcome, we consider the quotient graph $G_q$ that has vertex set 
$V_q:=\{S_i : 1\leq i \leq m\}$, and where $S_iS_j$ forms an edge in $G_q$ if and only if $(S_i,S_j)$ is an adjacent pair.
We define a probability measure $\mu$ on $V_q$ in the natural way by setting $\mu(S_i):=|S_i|/|\hat{S}|$. 
Note that, by Property $(a)$, $\mu(S_i)\leq \eps_0/2$ for every vertex $S_i$ in $G_q$. 
By Property $(c)$, the graph $G_q$ is claw-free and Berge. 
We now invoke Theorem \ref{thm:main-claw-free} to see that 
there is either a homogeneous pair $(P_q, Q_q)$ in $G_q$ with 
$\mu(P_q), \mu(Q_q)\geq \delta_{cB}$, or there is a vertex $S_i\in V_q$ 
with $\mu(S_i)\geq 1-2\delta_{cB}$, where $\delta_{cB}$ is a 
constant that can be taken to be $\frac{1}{58}$ (see Section \ref{sec:claw-free}). 
Since for every $1\leq i \leq m$ we have that $\mu(S_i)\leq \eps_0 /2 \leq 1-2\delta_{cB}$ 
the first outcome must hold for $G_q$. 
Consider the sets $P:=\bigcup_{S_i\in P_q} S_i$ and $Q:=\bigcup_{S_i\in Q_q} S_i$. Note that since the $S_i$'s are modules and $(P_q,Q_q)$ is a homogeneous pair in $G_q$, then $(P,Q)$ is a homogeneous pair in $G[\hat{S}]$ and hence in $G$.
Furthermore $|P|,|Q|\geq \delta_{cB} |\hat{S}| \geq \frac{\delta_{cB}}{50} \cdot  n \geq \frac{n}{3000}$, giving us the homogeneous pair of the desired size. 

Thus we may assume the second outcome holds, where we find an active $\ell$-hook $(X,R)$, for some $\ell \geq k$, such that $R\se S$ and $|R|\geq 2\frac{\eps_0}{10} |S| \geq \frac{\eps_0}{50}n$ as required. 
\end{proof}

To prove the theorem, we must show that there exists a constant $\delta>0$ such that every $G\in\cG$ contains a homogeneous pair $(P,Q)$ with $|P|,|Q|\geq \delta\cdot v(G)$. 
Fix $\eps = \eps_0^2 / 40000$, set $\delta_0 = \delta(\eps,H_k^2)$ where $\delta(\cdot, \cdot)$ is the constant from Theorem~\ref{thm:reg} and set $\delta = \delta_0\eps_0^2 / 10^7$. 
Thus since both $G$ and $G^\compl$ are $H_k^2$-free, Theorem~\ref{thm:reg} implies that either $G$ or $G^\compl$ contains an induced subgraph, say $G_0$, 
on at least $\delta_0\cdot v(G)$ vertices with edge density at most $\eps$. Assume without loss of generality that $G_0$ is an induced subgraph of $G$.
By a simple averaging argument, $G_0$ contains an induced subgraph $G_1$, with $v(G_1) \geq v(G_0)/2 \geq \delta_0\cdot v(G) / 2$ and where $G_1$ has maximum degree at most $4\eps\cdot v(G_1)$.

By the claim, either $G_1$ (and hence $G$) has a homogeneous pair of size at least $\eps_0\cdot v(G_1) / 3000 \geq \delta\cdot v(G)$ (and we are done) or $G_1$ has an active $\ell$-hook $(X,R)$ for some $\ell \geq k$ with $|R|\geq \eps_0\cdot v(G_1) / 50$. 
That is, $G[X]$ is isomorphic to an $\ell$-hook, say with active vertex $x$, and $N(R)\cap X=\{x\}$. 
Set $R':= R\sm N(x)$. Note that $|R'| \geq |R| - \Delta(G_1) \geq \eps_0\cdot v(G_1) / 100$.

We can now apply the claim to $G_2 = G[R']$ since $\Delta(G_2) \leq \Delta(G_1) \leq 4\eps\cdot v(G_1) \leq \eps_0|R'|/100$. Thus either $G_2$ has a homogeneous pair of size at least $\eps_0|R'|/3000 > \delta\cdot v(G)$ (and we are done) or $G_2$ has an active $\ell$-hook $(X^*,R^*)$ for some $\ell \geq k \geq 2$.
Let $P_x$ be the shortest path in the graph $G[R]$ between the vertex $x$ (the active vertex of the active $\ell$-hook 
$(X,R)$) and the set $X^*$, and suppose it meets $X^*$ at the vertex $y$.
Such a path certainly exists since $N(R)\cap X=\{x\}$ and $R$ is connected, and it must be an induced path (since it is a shortest path). 
Now, one easily sees that $X\cup V(P_x)\cup X^*$ contains an induced copy of a double $\ell'$-hook, 
for some $\ell'\geq \ell$ (no matter where $y$ is in $X^*$!). 
(To see that this copy is induced, note that by definition, there are no edges between 
$X$ and $R$ other than those between $x$ and $N_R(x)$.) 
But this is a contradiction to having $G \in \cG$. 
\end{proof}

\section{Warm up: Proof of Theorem~\ref{thm:tech-claw-free}}\label{sec:warm-up}
\newcommand{\Req}{\ensuremath{R^=}}
\newcommand{\Rinc}{\ensuremath{R^{\neq}}}
\newcommand{\Rsub}{\ensuremath{R^\subsetneq}}
\newcommand{\Rsup}{\ensuremath{R^\supsetneq}}
\newcommand{\ReqA}{\ensuremath{R^=_A}}
\newcommand{\ReqB}{\ensuremath{R^=_B}}

\newcommand{\Esub}{\ensuremath{E^\subsetneq}}
\newcommand{\EeqA}{\ensuremath{E^=_A}}
\newcommand{\EeqB}{\ensuremath{E^=_B}}

The aim of this section is to provide a proof of Theorem~\ref{thm:tech-claw-free}
that will serve as a warm-up before proving the main technical step of this paper, namely Theorem~\ref{thm:tech-hooks}.
The proofs of Theorems~\ref{thm:tech-claw-free} and~\ref{thm:tech-hooks} follow the same general outline,
while the technical details in this section are much simpler. 
To exhibit the similarities between the proofs, we use nearly the same subsection structure in this section and the next one,
even though here some subsections will consist only of a single simple observation.

Let us fix a $1/10$-structured pair $(G,(A,B,S))$
such that $G$ is claw-free and $C_5$-free. 
Define a binary relation $\Req$ on $S$ as $\Req(x,y)$ if
and only if $N_{A \cup B}(x) = N_{A \cup B}(y)$ and we note that this is an equivalence relation.
Our approach consists of the following steps:
\begin{enumerate}
\item We start with filtering out vertices $x \in S$ that have large neighbourhood in $A$ or in $B$. Since
every vertex $p \in A \cup B$ satisfies $|N_S(p)| \leq |S|/10$, a standard averaging argument shows that the number of such vertices
is small.
\item Second, for every $x \in S$, we study nonedges inside neighbourhoods $N_A(x)$; such nonedges turn out to be good starting
points to construct either a claw (in the case of Theorem~\ref{thm:tech-claw-free}) or a hook (in the case of Theorem~\ref{thm:tech-hooks}).
\item Then, for every two vertices $x,y \in S$, we investigate how the neighbourhoods $N_A(x)$ and $N_A(y)$ differ,
  depending on whether $xy$ is an edge or a nonedge. Intuitively, we want to prove that if $xy \in E(G)$, then the neighbourhoods
  in $A$ and $B$ cannot change much, while if $xy \notin E(G)$, then they should change much or not at all.
\item We then collect the main properties we need from the aforementioned steps in the definition of a \emph{nice}  $\degeps$-structured pair.
We prove that the relevant $\degeps$-structured pair is nice, both in the proof of Theorem~\ref{thm:tech-claw-free} and Theorem~\ref{thm:tech-hooks}.
In this section we show that it is sufficient for the relevant $\degeps$-structured pair to be nice in order to find a large set $\hat{S} \subseteq S$, such that if we
restrict the relation $\Req$ to $\hat{S}$, the equivalence classes of this relation
form a decomposition of $G[\hat{S}]$ into modules.
\item Finally, we show that in the case of Theorem~\ref{thm:tech-claw-free}, the quotient graph of the aforementioned
decomposition is diamond-free; this, together with being claw-free, implies that the quotient graph is in fact a line graph
of a triangle-free graph, concluding the proof of Theorem~\ref{thm:tech-claw-free}.
\end{enumerate}

In the proofs of Theorems~\ref{thm:tech-claw-free} and~\ref{thm:tech-hooks},
if a statement is accompanied with a sign (\ABsym), then we also claim that the same statement holds
with the roles of $A$ and $B$ swapped.

\subsection{Filtering step}

Let $S_A = \{x \in S: A \subseteq N(x)\}$ and similarly define $S_B$.
A standard averaging argument shows the following:
\begin{claim}
$|S_A|,|S_B| \leq |S|/10$.
\end{claim}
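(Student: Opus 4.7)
The plan is to observe that the bound follows immediately from property (4) of an $\eps$-structured pair, without any real averaging. Since $A \neq \emptyset$ (as $G[A]$ is connected and the partition is into \emph{non-empty} parts), pick any $a \in A$. Every $x \in S_A$ satisfies $A \subseteq N(x)$, so in particular $a \in N(x)$, i.e.\ $x \in N_S(a)$. Hence $S_A \subseteq N_S(a)$, and the structured-pair property gives
\[
|S_A| \;\leq\; |N_S(a)| \;\leq\; |N_S[a]| \;\leq\; \tfrac{1}{10}|S|,
\]
since the pair is $1/10$-structured. The bound on $|S_B|$ follows identically by swapping the roles of $A$ and $B$.

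The ``averaging'' interpretation the text alludes to is essentially equivalent: counting incidences $(x,a) \in S_A \times A$ with $xa \in E(G)$ yields $|S_A|\cdot|A| \leq |A|\cdot |S|/10$, from which one divides by $|A|$. I prefer the one-vertex argument because it avoids the (trivial, but still distracting) need to mention $|A|\geq 1$. There is no real obstacle here: the claim is essentially a direct unpacking of condition (4) in the definition of an $\eps$-structured pair applied to a single, arbitrarily chosen witness in $A$ (respectively $B$).
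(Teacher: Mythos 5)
Your proof is correct, and it is in fact more elementary than the one in the paper. The paper establishes the bound by an averaging (first-moment) argument: pick $p\in A$ and $x\in S$ uniformly at random, note that $\Prob(px\in E(G))\le 1/10$ because every vertex of $A$ has at most $|S|/10$ neighbours in $S$, and observe that $x\in S_A$ forces $px\in E(G)$ for every choice of $p$. Your single-witness argument short-circuits this: since $S_A$ is contained in $N_S(a)$ for \emph{every} $a\in A$ (and $A\neq\emptyset$ by the definition of a structured pair), one containment plus condition~(4) of the definition already gives $|S_A|\le|N_S(a)|\le|S|/10$, with no probability needed. Both rest on exactly the same degree hypothesis, so nothing is lost. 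What the paper's formulation buys is uniformity with the harder filtering step later (Claim~\ref{lem:h:filter}), where $S_A$ is redefined as $\{x\in S:\muA(N_A(x))\ge 10\degeps\}$ for a non-uniform measure $\muA$; there no single witness $a$ works and the averaging argument is genuinely required. For the claim as stated here, your argument is a clean simplification.
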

\begin{proof}
Consider the following random experiment: independently and uniformly at random pick a vertex $p \in A$ and $x \in S$.
Since every vertex in $A$ is adjacent to at most $|S|/10$ vertices of $S$, the probability that $px \in E(G)$ is at most
$1/10$. On the other hand, once $x \in S_A$, we have $px \in E(G)$ regardless of the choice of $p$. Consequently, the probability
that $x \in S_A$ is at most $1/10$.
\end{proof}
Define now $S' = S \setminus (S_A \cup S_B)$ and $G' = G \setminus (S_A \cup S_B)$.
Since $|S_A \cup S_B| \leq |S|/5$, we have that $(G',(A,B,S'))$ is an $1/8$-structured pair.

By restricting ourselves to the structured pair $(G',(A,B,S'))$, it suffices to prove the conclusion of Theorem~\ref{thm:tech-claw-free}
with stronger condition $|\hat{S}| \geq |S|/4$, but with the additional assumption
\begin{equation}\label{eq:cf:filter}
\forall_{x \in S} (N_A(x) \subsetneq A) \wedge (N_B(x) \subsetneq B).
\end{equation}
To simplify the notation, 
in the rest of this section we assume that the input structured graph is only $1/8$-structured, but satisfies already~\eqref{eq:cf:filter}.

\subsection{Neighbourhoods in $A \cup B$}

\subsubsection{Nonedges inside a neighbourhood in $A$}

In the case of claw-free graphs, there are simply no edges inside neighbourhoods in $A$.
\begin{claim}[\ABsym]\label{lem:cf:Anei}
For every $x \in S$ the set $N_A(x)$ is a clique.
\end{claim}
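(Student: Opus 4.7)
The plan is to argue by contradiction: suppose there exist $a_1,a_2 \in N_A(x)$ with $a_1a_2 \notin E(G)$, and exhibit an induced claw centred at $x$. The natural fourth vertex, besides $x$, $a_1$ and $a_2$, should come from $B$, so that it is automatically nonadjacent to $a_1$ and $a_2$ by virtue of the structured pair having no edges between $A$ and $B$.

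Concretely, I would first use condition~3 of the definition of a structured pair, namely $N(B) = S$, to pick a vertex $b \in N_B(x)$; such a $b$ exists precisely because $x \in S$. Then I would verify that $(x;a_1,a_2,b)$ is an induced claw in $G$: the vertex $x$ is adjacent to all of $a_1$, $a_2$, $b$ by construction, $a_1a_2 \notin E(G)$ by the assumed nonedge, and neither $a_1b$ nor $a_2b$ is an edge because $A$ and $B$ are anti-adjacent in any structured pair. This directly contradicts the hypothesis that $G$ is claw-free.

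Note that this argument does not make any use of the $C_5$-freeness hypothesis, nor of the filtering condition~\eqref{eq:cf:filter}; the claim follows from claw-freeness together with the mere definition of a structured pair. The only thing I should be careful about is to check that $b \notin \{x,a_1,a_2\}$, which is immediate because $b \in B$ while $x \in S$ and $a_1,a_2 \in A$, and these three sets are disjoint. There is no real obstacle in this proof — it is the simplest instance of the general strategy used throughout the section, which extracts obstructions by combining a nonedge inside $N_A(x)$ with a carefully chosen witness from $B$ (or, in more intricate arguments later, an induced path grown through $B$ and across $S$).
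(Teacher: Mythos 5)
Your proof is correct and is essentially identical to the paper's: both take a nonedge $pq$ inside $N_A(x)$, pick any $z \in N_B(x)$ (which exists since $N(B)=S$), and observe that $(x;p,q,z)$ is an induced claw because $A$ and $B$ are anti-adjacent. Your additional remarks about disjointness and the non-use of $C_5$-freeness are accurate but not needed beyond what the paper states.
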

\begin{proof}
Assume the contrary, let $p,q \in N_A(x)$, $p \neq q$, and $pq \notin E(G)$.
Let $z \in N_B(x)$ be any vertex (it exists since $N(B) = S$). 
Then $(x;p,q,z)$ is a claw in $G$, a contradiction.
\end{proof}

\subsubsection{Neighbourhoods along a nonedge in $S$}

\begin{claim}[\ABsym]\label{lem:cf:Snonedge}
For every $x,y \in S$ with $x \neq y$, $xy \notin E(G)$,
there is no edge between $N_A(x) \cap N_A(y)$
and $A \setminus N_A(x,y)$.
\end{claim}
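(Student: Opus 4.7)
The plan is to prove this by direct contradiction using claw-freeness, very much in the spirit of the preceding Claim~\ref{lem:cf:Anei}. Suppose for the sake of contradiction that there exist $p \in N_A(x) \cap N_A(y)$ and $q \in A \setminus N_A(x,y)$ with $pq \in E(G)$. The four-tuple I would exhibit as a claw is $(p; x, y, q)$, centered at $p$. Three of the required edges are immediate: $px \in E(G)$ and $py \in E(G)$ follow from $p \in N_A(x) \cap N_A(y)$, and $pq \in E(G)$ is precisely the edge we assumed exists.

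It then remains to check that $\{x, y, q\}$ is an independent set and that the four vertices are distinct. The nonedge $xy$ is given by hypothesis; the nonedges $xq$ and $yq$ follow from $q \notin N_A(x,y) = N_A(x) \cup N_A(y)$. For distinctness, note that $x, y \in S$ while $p, q \in A$ and $S \cap A = \emptyset$, and $p \neq q$ because $p$ is adjacent to $x$ but $q$ is not. Thus $(p; x, y, q)$ is a genuine induced claw in $G$, contradicting the assumption that $G$ is claw-free.

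I do not anticipate any real obstacle: the argument mirrors the previous claim almost verbatim, the only difference being that here the fourth vertex of the claw is taken to be $q \in A$ itself rather than an auxiliary vertex in $N_B(x)$, so neither the hypothesis $N(B) = S$ nor the filtering property~\eqref{eq:cf:filter} nor $C_5$-freeness is needed; claw-freeness alone suffices.
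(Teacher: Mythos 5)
Your proof is correct and is exactly the paper's argument: the paper also exhibits the claw $(p;x,y,q)$ centered at $p$ and derives the same contradiction from claw-freeness alone. Your additional verification of distinctness and of the independence of $\{x,y,q\}$ is just a more explicit write-up of the same one-line proof.
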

\begin{proof}
Assume the contrary, let $p \in N_A(x) \cap N_A(y)$ and $q \in A \setminus N_A(x,y)$ with $pq \in E(G)$.
Then $(p;x,y,q)$ is a claw in $G$, a contradiction.
\end{proof}

\subsubsection{Neighbourhoods along an edge in $S$}

\begin{claim}\label{lem:cf:Sedge}
For every $xy \in E(G[S])$, either $N_A(x) \setminus N_A(y)$
or $N_B(x) \setminus N_B(y)$ is empty.
\end{claim}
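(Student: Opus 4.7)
The plan is to argue by contradiction: assume that both sets $N_A(x) \setminus N_A(y)$ and $N_B(x) \setminus N_B(y)$ are nonempty, pick a witness from each, and combine them with $y$ to produce a claw centered at $x$, contradicting the claw-freeness of $G$. This mirrors the style of the two preceding claims (Claim~\ref{lem:cf:Anei} and Claim~\ref{lem:cf:Snonedge}), where a single well-chosen quadruple exhibits a forbidden induced subgraph.

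In more detail, suppose for contradiction that we have $p \in N_A(x) \setminus N_A(y)$ and $q \in N_B(x) \setminus N_B(y)$. I would first verify the three required edges at the center: $xp \in E(G)$ and $xq \in E(G)$ by the choice of $p,q$ as neighbours of $x$, and $xy \in E(G)$ by assumption. Then I would verify the three required non-edges among $p,q,y$: the pair $pq$ is a non-edge because $(G,(A,B,S))$ is a structured pair, which forbids edges between $A$ and $B$; the pair $py$ is a non-edge because $p \notin N_A(y)$; and similarly $qy$ is a non-edge because $q \notin N_B(y)$. This makes $(x;p,q,y)$ a claw in $G$, the sought contradiction.

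There is essentially no obstacle here, since the structured pair axioms directly supply the crucial non-edge between $A$ and $B$, and the other two non-edges are forced by how we picked $p$ and $q$. Notably, the hypothesis $xy \in E(G[S])$ is exactly what is needed to close up the center of the claw. Observe that, unlike in the warm-up's global setting, the $C_5$-free assumption is not actually used in this particular claim; claw-freeness alone suffices.
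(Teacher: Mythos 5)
Your proposal is correct and is exactly the paper's argument: pick $p_A \in N_A(x)\setminus N_A(y)$ and $p_B \in N_B(x)\setminus N_B(y)$ and observe that $(x;y,p_A,p_B)$ is a claw, with the $A$--$B$ non-edge supplied by the structured-pair axioms. Your additional remarks (that $C_5$-freeness is not needed here) are accurate but do not change the approach.
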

\begin{proof}
Assume the contrary, let $p_\Gamma \in N_\Gamma(x) \setminus N_\Gamma(y)$ for $\Gamma \in \{A,B\}$.
Then $(x;y,p_A,p_B)$ is a claw in $G$, a contradiction.
\end{proof}

\subsection{Niceness of an $\degeps$-structure and its corollaries}\label{sec:nice}

In the following definition, we extract some properties of the $\eps$-structured pair $(G,(A,B,S))$
that were proven in Claims~\ref{lem:cf:Anei}, \ref{lem:cf:Snonedge}, and~\ref{lem:cf:Sedge},
and then show what can be deduced from these properties only.
Exactly the same properties will be proven in the next section, in the more general setting
of Theorem~\ref{thm:tech-hooks}, and hence we will be able to reuse the statements obtained here.

\begin{definition}\label{def:nice}
An $\degeps$-structured pair $(G,(A,B,S))$ is called \emph{nice}
if the following holds:
\begin{description}
\item[(NE1)] for every $x \in S$ we have $A \not\subseteq N(x)$ and $B \not\subseteq N(x)$;
\item[(NE2)] (\ABsym) for every $x,y \in S$ with $x \neq y$ and $xy \notin E(G)$, if $N_B(x) \neq N_B(y)$, then there is no edge between
$N_A(x) \cap N_A(y)$ and $A \setminus N_A(x,y)$;
\item[(NE3)] for every $x,y \in S$ with $x \neq y$, $xy \notin E(G)$, and $N_A(x) \subsetneq N_A(y)$, the sets
$N_A(x)$ and $N_A(y) \setminus N_A(x)$ are fully adjacent;
\item[(E1)] for every $x,y \in S$ such that $xy \in E(G[S])$, either $N_A(x) \setminus N_A(y) = \emptyset$
or $N_B(x) \setminus N_B(y) = \emptyset$.
\end{description}
\end{definition}

Note that we have used here the notation (\ABsym), denoting that the particular condition is required to hold also
with the roles of $A$ and $B$ swapped. Whenever $\eps$ is unimportant for the analysis we shall drop it from the notation and speak only of a {\em (nice) structured pair}. 

Let us now formally verify that the considered structured pair $(G,(A,B,S))$ is nice.
\begin{claim}\label{lem:cf:nice}
The structured pair $(G,(A,B,S))$ is nice.
\end{claim}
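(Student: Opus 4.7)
My plan is to handle the four properties in the definition of ``nice'' one by one, pointing to where each has essentially already been established in the preceding subsections. First, (NE1) is immediate: it is literally the condition~\eqref{eq:cf:filter} that we imposed after the filtering step, namely that $N_A(x)\subsetneq A$ and $N_B(x)\subsetneq B$ for every $x\in S$, so nothing further is needed there. Similarly, (E1) is identical to the statement of Claim~\ref{lem:cf:Sedge}, which was obtained by exhibiting a claw $(x;y,p_A,p_B)$ whenever the conclusion fails.

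For (NE2), I observe that Claim~\ref{lem:cf:Snonedge} already establishes the desired conclusion under a strictly weaker hypothesis than (NE2) demands. Indeed, Claim~\ref{lem:cf:Snonedge} only assumes $x\neq y$ and $xy\notin E(G)$, whereas (NE2) additionally assumes $N_B(x)\neq N_B(y)$. The conclusion, namely that there is no edge between $N_A(x)\cap N_A(y)$ and $A\setminus N_A(x,y)$, is identical in both statements, so (NE2) follows as an immediate weakening. The symmetric version with $A$ and $B$ interchanged follows in the same way from the $(\ABsym)$-twin of Claim~\ref{lem:cf:Snonedge}.

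Finally, (NE3) is the only condition that requires a small additional observation, and it reduces cleanly to Claim~\ref{lem:cf:Anei}. Given $x,y\in S$ with $x\neq y$, $xy\notin E(G)$, and $N_A(x)\subsetneq N_A(y)$, the goal is to show that every vertex of $N_A(x)$ is adjacent to every vertex of $N_A(y)\setminus N_A(x)$. Since both sets are subsets of $N_A(y)$, and Claim~\ref{lem:cf:Anei} guarantees that $N_A(y)$ is a clique (because $G$ is claw-free and $y$ has at least one neighbour in $B$), all such edges are present, finishing the argument.

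I do not anticipate any real obstacle: the work done in the earlier subsections was chosen so as to feed directly into this definition, and this claim is essentially a bookkeeping step that records which of the previously proved combinatorial facts play the role of each clause of Definition~\ref{def:nice}. The only point worth double-checking is that the strengthening from Claim~\ref{lem:cf:Snonedge} to (NE2) is indeed a weakening (i.e.\ that we are not accidentally using the extra hypothesis anywhere), which is immediate from inspection.
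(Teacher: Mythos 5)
Your proof is correct and matches the paper's argument exactly: (NE1) is \eqref{eq:cf:filter}, (NE2) is a weakening of Claim~\ref{lem:cf:Snonedge}, (NE3) is a special case of Claim~\ref{lem:cf:Anei} (since $N_A(y)$ is a clique containing both $N_A(x)$ and $N_A(y)\setminus N_A(x)$), and (E1) is Claim~\ref{lem:cf:Sedge}. No issues.
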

\begin{proof}
Property (NE1) is equivalent to~\eqref{eq:cf:filter},
Property (NE2) is strictly weaker than the statement of Claim~\ref{lem:cf:Snonedge},
Property (NE3) is a special case of the statement of Claim~\ref{lem:cf:Anei},
while Property (E1) is exactly the statement of Claim~\ref{lem:cf:Sedge}.
\end{proof}

We start our analysis of nice structured graphs with the following observation.
\begin{lemma}\label{lem:nice:inc}
If a structured pair $(G,(A,B,S))$ satisfies Properties~(NE1) and~(NE2), then for every two distinct vertices $x,y \in S$ with $xy \notin E(G)$
we have $N_A(x) = N_A(y)$ if and only if $N_B(x) = N_B(y)$.
\end{lemma}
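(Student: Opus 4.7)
The statement is symmetric in the roles of $A$ and $B$, so it suffices to prove one implication, say $N_A(x) = N_A(y) \Rightarrow N_B(x) = N_B(y)$; the other direction will follow from the (\ABsym) convention applied to (NE2). I will argue by contradiction: assume $x,y \in S$ are distinct and nonadjacent with $N_A(x) = N_A(y)$ but $N_B(x) \neq N_B(y)$, and derive an edge inside $A$ whose existence contradicts (NE2).

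The plan is as follows. Since $(G,(A,B,S))$ is a structured pair, we have $N(A) = S$, which means every vertex of $S$ has at least one neighbour in $A$; combined with Property~(NE1), this gives $\emptyset \neq N_A(x) \subsetneq A$. Next, I apply~(NE2): the hypothesis $N_B(x) \neq N_B(y)$ together with $xy \notin E(G)$ yields that there is no edge between $N_A(x) \cap N_A(y)$ and $A \setminus N_A(x,y)$. Using $N_A(x) = N_A(y)$, these two sets simplify to $N_A(x)$ and $A \setminus N_A(x)$ respectively, so there is no edge in $G$ between $N_A(x)$ and $A \setminus N_A(x)$.

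Finally, I invoke the fact, built into the definition of a structured pair, that $G[A]$ is connected. Together with $\emptyset \neq N_A(x) \subsetneq A$, this forces the existence of at least one edge between $N_A(x)$ and $A \setminus N_A(x)$, a contradiction. This finishes the forward direction; the reverse direction is identical after swapping the roles of $A$ and $B$ in (NE2).

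There is no real obstacle here: the lemma is essentially a direct unpacking of~(NE2) in the degenerate case $N_A(x) = N_A(y)$, using only that $G[A]$ is connected and that $N_A(x)$ is a nontrivial subset of $A$. The only point to be slightly careful about is to ensure that both $N_A(x) \neq \emptyset$ (from $N(A) = S$) and $N_A(x) \neq A$ (from~(NE1)) hold, so that the connectedness of $G[A]$ can be applied meaningfully.
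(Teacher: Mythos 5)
Your proof is correct and is essentially identical to the paper's: both apply (NE2) to reduce to the statement that there is no edge between $N_A(x)$ and $A \setminus N_A(x)$, then use (NE1), $N(A)=S$, and the connectivity of $G[A]$ to reach a contradiction. Your explicit remark that the converse direction follows from the (\ABsym) version of (NE2) is a point the paper leaves implicit.
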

\begin{proof}
Assume by contradiction that for some $x,y \in S$ with $x \neq y$ and $xy \notin E(G)$ we have
$N_A(x) = N_A(y)$ but $N_B(x) \neq N_B(y)$. By Property~(NE2), there is no edge between $N_A(x) \cap N_A(y) = N_A(x)$
and $A \setminus N_A(x,y) = A \setminus N_A(x)$. However, by Property~(NE1) and the assumption $N(A)=S$,
both $N_A(x)$ and $A \setminus N_A(x)$ are nonempty. This contradicts the connectivity of $G[A]$.
\end{proof}

We now move to a deeper study of the situation treated in Property (NE3).
\begin{lemma}[\ABsym]\label{lem:nice:stack}
If $(G,(A,B,S))$ is a nice structured pair, then there do not exist three distinct vertices $x,y,z \in S$
with $xy,yz \notin E(G)$ and $N_A(x) \subsetneq N_A(y) \subsetneq N_A(z)$.
\end{lemma}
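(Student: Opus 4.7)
The plan is to derive a contradiction by exhibiting two nonempty subsets of $A$ that are simultaneously fully adjacent and fully non-adjacent. The two sets will be $N_A(x)$ and $N_A(z) \setminus N_A(y)$. Both are nonempty: the first because $x \in S = N(A)$, so $x$ has at least one neighbour in $A$; the second by the strict inclusion $N_A(y) \subsetneq N_A(z)$.

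For the full adjacency side, I would apply Property~(NE3) to the pair $(y,z)$: since $yz \notin E(G)$ and $N_A(y) \subsetneq N_A(z)$, the sets $N_A(y)$ and $N_A(z) \setminus N_A(y)$ are fully adjacent, and because $N_A(x) \subseteq N_A(y)$, so are $N_A(x)$ and $N_A(z) \setminus N_A(y)$. For the full non-adjacency side, I would first invoke Lemma~\ref{lem:nice:inc} on the pair $(x,y)$: since $xy \notin E(G)$ and $N_A(x) \subsetneq N_A(y)$ forces $N_A(x) \neq N_A(y)$, the lemma yields $N_B(x) \neq N_B(y)$. This is precisely the hypothesis needed to apply Property~(NE2) to $(x,y)$, which then asserts that no edge connects $N_A(x) \cap N_A(y) = N_A(x)$ to $A \setminus N_A(x,y) = A \setminus N_A(y)$. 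Since $N_A(z) \setminus N_A(y) \subseteq A \setminus N_A(y)$, this gives exactly the full non-adjacency between $N_A(x)$ and $N_A(z) \setminus N_A(y)$ needed to contradict the previous step.

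Note that the argument never appeals to whether $xz$ is an edge; the third vertex relation is irrelevant. The only subtlety I foresee is pairing the axioms correctly: one must apply (NE2) to $(x,y)$ and (NE3) to $(y,z)$, since the opposite pairing would not make the sets of $A$-neighbours produced by the two properties coincide (the ``contraction'' from $N_A(y)$ down to $N_A(x)$ in the first step and the ``expansion'' from $N_A(y)$ up to $N_A(z) \setminus N_A(y)$ in the second step are what align the two conclusions). I do not anticipate any further obstacle; the lemma falls out directly from the definition of niceness combined with Lemma~\ref{lem:nice:inc}.
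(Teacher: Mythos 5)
Your proof is correct and is essentially the same as the paper's: the paper also applies Lemma~\ref{lem:nice:inc} and Property~(NE2) to the pair $(x,y)$ and Property~(NE3) to the pair $(y,z)$, deriving the contradiction on an edge/nonedge between a vertex of $N_A(x)$ and a vertex of $N_A(z)\setminus N_A(y)$. The only cosmetic difference is that the paper phrases the contradiction for a single pair of vertices $p\in N_A(x)$, $q\in N_A(z)\setminus N_A(y)$ rather than for the two sets in full.
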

\begin{proof}
Assume the contrary, and let $x,y,z$ be as in the statement. Let $p$ be any vertex of $N_A(x)$ and $q$ be any vertex
of $N_A(z) \setminus N_A(y)$. By Lemma~\ref{lem:nice:inc} applied to the pair $(x,y)$, we
have $N_B(x) \neq N_B(y)$ since $N_A(x) \neq N_A(y)$.
By Property (NE2) applied to the pair $(x,y)$, we have $pq \notin E(G)$, since $p \in N_A(x) = N_A(x) \cap N_A(y)$
and $q \in N_A(z) \setminus N_A(y) \subseteq A \setminus N_A(x,y)$. However, Property (NE3) applied to the pair $(y,z)$ implies
that $pq \in E(G)$, a contradiction.
\end{proof}

Recall that we have defined the relation $\Req$ on the set $S$ as $\Req(x,y)$ if and only if $N_{A \cup B}(x) = N_{A \cup B}(y)$.
We now introduce a number of other binary relations on the set $S$ that describe the relation between neighbourhoods in $A \cup B$.
For two vertices $x,y \in S$ we have
\begin{description}
\item[$\Rinc(x,y)$] if and only if $N_A(x)$ and $N_A(y)$ are incomparable with respect to inclusion, and $N_B(x)$ and $N_B(y)$ are incomparable with respect to inclusion;
\item[$\ReqA(x,y)$] if and only if $N_A(x) = N_A(y)$ and $N_B(x) \neq N_B(y)$;
\item[$\ReqB(x,y)$] if and only if $N_B(x) = N_B(y)$ and $N_A(x) \neq N_A(y)$;
\item[$\Rsub(x,y)$] if and only if $N_A(x) \subsetneq N_A(y)$ and $N_B(x) \supsetneq N_B(y)$;
\item[$\Rsup(x,y)$] if and only if $N_A(x) \supsetneq N_A(y)$ and $N_B(x) \subsetneq N_B(y)$.
\end{description}
Observe that the relations $\Req$, $\Rinc$, $\ReqA$, and $\ReqB$ are symmetric, while $\Rsub$ and $\Rsup$ are strongly antisymmetric, and $\Rsub(x,y)$ if and only if $\Rsup(y,x)$. Furthermore, all six defined relations are pairwise disjoint.

Lemma~\ref{lem:nice:stack} implies that, along nonedges in $S$, the neighbourhoods in $A$ cannot create chains with respect
to inclusions. As a corollary, we can obtain the following:
\begin{lemma}\label{lem:nice:getS}
If $(G,(A,B,S))$ is a nice structured pair, then there exists a set $\hat{S} \subseteq S$ of size at least $|S|/4$
such that for every $x,y \in \hat{S}$ with $x \neq y$ and $xy \notin E(G)$, either $\Req(x,y)$ or $\Rinc(x,y)$.
\end{lemma}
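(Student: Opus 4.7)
The plan is to show that, along nonedges, the only admissible relations in the list $\{\Req,\Rinc,\ReqA,\ReqB,\Rsub,\Rsup\}$ are $\Req$ and $\Rinc$, and then to find a large set avoiding all ``bad'' pairs via a short counting argument.

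First I would use Lemma~\ref{lem:nice:inc} to rule out $\ReqA$ and $\ReqB$ along nonedges outright: if $xy \notin E(G)$ and $N_A(x)=N_A(y)$, then $N_B(x)=N_B(y)$ as well, so $(x,y) \in \Req$, contradicting being in $\ReqA$; symmetrically for $\ReqB$. Thus the only remaining bad relations along nonedges are $\Rsub$ and $\Rsup$. Let $H$ be the auxiliary graph on $S$ whose edges are exactly the pairs $xy$ with $xy \notin E(G)$ and $\Rsub(x,y)$ or $\Rsup(x,y)$; the lemma reduces to finding an independent set of $H$ of size at least $|S|/4$.

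Next I would orient $H$ into a directed graph $\vec{H}$ by setting $x \to y$ whenever $\Rsub(x,y)$. Since strict inclusion on sets is transitive, $\Rsub$ is a strict partial order, so $\vec{H}$ is acyclic. The crucial input is Lemma~\ref{lem:nice:stack}: a directed path $x \to y \to z$ in $\vec{H}$ witnesses three distinct vertices with $xy, yz \notin E(G)$ and $N_A(x) \subsetneq N_A(y) \subsetneq N_A(z)$, which the lemma forbids. Hence $\vec{H}$ has no directed path of length two, so every vertex either has in-degree zero or out-degree zero (otherwise it would be the middle of such a path). Let $V_+$ be the set of vertices of in-degree zero and $V_-$ the set of vertices of out-degree zero; then $V_+ \cup V_- = S$ and each of $V_+, V_-$ is independent in $H$ (any edge of $H$ is oriented from a vertex with out-degree at least one to a vertex with in-degree at least one). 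Taking the larger of the two yields $\hat{S}$ with $|\hat{S}| \ge |S|/2 \ge |S|/4$.

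The argument is genuinely short, and I do not expect a real obstacle. The only thing worth being careful about is the translation between the partial-order picture and Lemma~\ref{lem:nice:stack}: one must check that the lemma's hypothesis $N_A(x) \subsetneq N_A(y) \subsetneq N_A(z)$ is precisely what a directed $P_3$ in $\vec{H}$ supplies (using the definition of $\Rsub$, which is $\subsetneq$ on $N_A$ and $\supsetneq$ on $N_B$), after which the ``no directed $P_3$ implies a 2-coloring by sources and sinks'' observation finishes the argument with room to spare.
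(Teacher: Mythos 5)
There is a genuine gap at the very first reduction. You treat the six relations $\Req,\Rinc,\ReqA,\ReqB,\Rsub,\Rsup$ as exhaustive, so that after Lemma~\ref{lem:nice:inc} kills $\ReqA$ and $\ReqB$ along nonedges, the only remaining ``bad'' pairs are those in $\Rsub\cup\Rsup$. But the six relations do not cover all pairs: nothing in the definitions or in the niceness properties excludes, along a nonedge, a pair with $N_A(x)\subsetneq N_A(y)$ while $N_B(x)$ and $N_B(y)$ are incomparable (or with $N_A(x)\subsetneq N_A(y)$ and $N_B(x)\subsetneq N_B(y)$). Such a pair lies in none of the six relations, so it is not an edge of your auxiliary graph $H$, and your independent set $\hat S$ may contain it — yet it satisfies neither $\Req$ nor $\Rinc$, so the conclusion of the lemma fails for it. (The exhaustiveness you are implicitly using is exactly the content of Theorem~\ref{thm:nice:nei}, which is proved \emph{using} this lemma; assuming it here is circular.)

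The repair is essentially what the paper does, and your source/sink idea survives in modified form: instead of one graph $H$, build two digraphs $G_A$ and $G_B$ on $S$, putting an arc $(x,y)$ in $G_A$ whenever $xy\notin E(G)$ and $N_A(x)\subsetneq N_A(y)$, \emph{regardless} of what happens on the $B$-side (and symmetrically for $G_B$). Lemma~\ref{lem:nice:stack} forbids a directed $2$-path in each, so in each digraph every vertex is a source or a sink; but since the two partitions are independent you must intersect them, taking the best of the four sets $S\setminus(S_A^\alpha\cup S_B^\beta)$ for $\alpha,\beta\in\{+,-\}$, which is why the bound is $|S|/4$ rather than the $|S|/2$ you obtain. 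On the resulting $\hat S$ no nonedge pair has a strict inclusion on either side, and Lemma~\ref{lem:nice:inc} then forces each such pair to be either equal on both sides ($\Req$) or incomparable on both sides ($\Rinc$). Your single-digraph orientation only makes sense for pairs in $\Rsub\cup\Rsup$, where the $A$- and $B$-sides are coupled; it cannot accommodate the mixed cases above.
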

\begin{proof}
Consider an auxiliary directed multigraph $G_A$ defined as follows: we take $V(G_A) = S$ and for every $x,y \in S$ with $x \neq y$
and $xy \notin E(G)$ we add an arc $(x,y)$ if $N_A(x) \subsetneq N_A(y)$. Let $S_A^+$ be the set of vertices of $S$
that have positive out-degree in $G_A$, and let $S_A^-$ be the set of vertices of $S$ that have positive in-degree.
Symmetrically, define $G_B$ and sets $S_B^+$ and $S_B^-$.
For $\alpha, \beta \in \{+,-\}$, define $S^{\alpha\beta} = S \setminus (S_A^\alpha \cup S_B^\beta)$.

Lemma~\ref{lem:nice:stack} implies that $S_A^+ \cap S_A^- = \emptyset$ and $S_B^+ \cap S_B^- = \emptyset$, which in turn implies
that $S^{++} \cup S^{+-} \cup S^{-+} \cup S^{--} = S$. Consequently, by setting $\hat{S}$ to be the largest of the sets
$S^{\alpha\beta}$, we have $|\hat{S}| \geq |S|/4$. The definition of the sets $S^{\alpha\beta}$ ensures
that $G_A[\hat{S}]$ and $G_B[\hat{S}]$ are arcless, that is, for every $x,y \in \hat{S}$ with $xy \notin E(G)$
it cannot happen that $N_A(x) \subsetneq N_A(y)$ or $N_B(x) \subsetneq N_B(y)$.
However, Lemma~\ref{lem:nice:inc} ensures that once $N_{A \cup B}(x) \neq N_{A \cup B}(y)$ for some $x,y \in S$ with $xy \notin E(G)$,
then both $N_A(x) \neq N_A(y)$ and $N_B(x) \neq N_B(y)$,
and, consequently, $\Rinc(x,y)$ if $x,y \in \hat{S}$.
This finishes the proof of the lemma.
\end{proof}

Summarizing, we obtain the following statement, which says that for every distinct $x,y \in \hat{S}$,
the existence or non-existence of an edge $xy$ can be determined by examining the neighbourhoods of $x$ and $y$ in $A\cup B$.
\begin{theorem}\label{thm:nice:nei}
For every nice structured pair $(G,(A,B,S))$ there exists a set $\hat{S} \subseteq S$
with $|\hat{S}| \geq |S|/4$ such that for every $x,y \in \hat{S}$ with $N_{A \cup B}(x) \neq N_{A \cup B}(y)$
the following holds.
\begin{enumerate}
\item $xy \in E(G)$ if and only if exactly one of the following holds: $\ReqA(x,y)$, $\ReqB(x,y)$, $\Rsub(x,y)$, or $\Rsup(x,y)$.
\item $xy \notin E(G)$ if and only if $\Rinc(x,y)$.
\end{enumerate}
\end{theorem}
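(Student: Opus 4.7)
The plan is to let $\hat{S}$ be exactly the set produced by Lemma~\ref{lem:nice:getS}, so $|\hat{S}| \geq |S|/4$ comes for free, and the harder part of item~2 is already almost done: for any distinct non-adjacent $x,y \in \hat{S}$, the lemma gives $\Req(x,y)$ or $\Rinc(x,y)$, and the assumption $N_{A\cup B}(x) \neq N_{A\cup B}(y)$ rules out $\Req$. So every non-adjacent pair in $\hat{S}$ with different neighbourhoods satisfies $\Rinc$, and conversely if $\Rinc(x,y)$ holds then Property~(E1) forbids an edge $xy$: an edge would force $N_A(x)\subseteq N_A(y)$ or $N_B(x)\subseteq N_B(y)$, contradicting the incomparability in both $A$ and $B$.

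For item~1, I would attack the forward direction by applying Property~(E1) twice, once to $(x,y)$ and once to $(y,x)$. This yields two disjunctions: $N_A(x)\subseteq N_A(y)$ or $N_B(x)\subseteq N_B(y)$, and similarly $N_A(y)\subseteq N_A(x)$ or $N_B(y)\subseteq N_B(x)$. A $2\times 2$ case analysis then shows that the conjunction of these two disjunctions, together with $N_{A\cup B}(x) \neq N_{A\cup B}(y)$, places the pair $(x,y)$ in exactly one of $\ReqA$ (when both picks are on the $A$-side and $A$-neighbourhoods coincide), $\ReqB$ (both picks on the $B$-side), $\Rsub$, or $\Rsup$ (the two mixed cases). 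The four relations are pairwise disjoint by their very definitions (they differ in which of $N_A$ or $N_B$ is equal vs.\ strictly contained), so "exactly one" is automatic once we know at least one holds.

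For the converse direction of item~1, suppose $x,y \in \hat{S}$ with $N_{A\cup B}(x)\neq N_{A\cup B}(y)$ and one of $\ReqA(x,y), \ReqB(x,y), \Rsub(x,y), \Rsup(x,y)$ holds. If we had $xy \notin E(G)$, Lemma~\ref{lem:nice:getS} would force $\Req(x,y)$ or $\Rinc(x,y)$, both of which are disjoint from the four listed relations (the first requires all neighbourhoods to match, the second requires strict incomparability in $A$), so we must have $xy \in E(G)$.

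The whole argument is essentially a bookkeeping exercise layered on top of Lemma~\ref{lem:nice:getS} and Property~(E1); I do not expect a genuine obstacle. The only care-point is keeping track of the degenerate overlaps in the $2\times 2$ case analysis (e.g.\ when both $A$-containments are equalities, the pair reduces to $\ReqA$ rather than to $\Rsub$ or $\Rsup$), which is exactly why the theorem is stated only for pairs with $N_{A\cup B}(x)\neq N_{A\cup B}(y)$ and why the five relations above are formulated with strict containments.
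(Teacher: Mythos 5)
Your proposal is correct and follows essentially the same route as the paper: take $\hat{S}$ from Lemma~\ref{lem:nice:getS}, derive the nonedge direction and the ``if'' part for edges from that lemma plus the pairwise disjointness of the six relations, and derive the remaining two implications from Property~(E1). The paper states this in one sentence; your $2\times 2$ case analysis (including the degenerate subcases collapsing to $\ReqA$ or $\ReqB$) is just the fully written-out version of the same argument.
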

\begin{proof}
We obtain the set $\hat{S}$ from Lemma~\ref{lem:nice:getS}. The ``if'' part of the assertion for edges $xy$ and the ``only if'' part of the assertion for nonedges $xy$ is straightforward from Lemma~\ref{lem:nice:getS}, while
the remaining two implications follow from Property (E1).
\end{proof}

As mentioned at the beginning of this section, we now partition $\hat{S}$ according to the relation $\Req$.
Observe that due to Theorem~\ref{thm:nice:nei}, the presence or absence of an edge between two vertices $x,y \in S$ is determined by $N_{A \cup B}(x)$ and $N_{A \cup B}(y)$ unless $\Req(x,y)$.
An immediate corollary is the following.

\begin{corollary}\label{cor:nice:modules}
Let $(G,(A,B,S))$ be a nice structured pair, let $\hat{S} \subseteq S$ be the set obtained from Theorem~\ref{thm:nice:nei},
and let $S_1,S_2,\ldots,S_r$ be the equivalence classes of the relation $\Req$ restricted to $\hat{S}$.
Then every set $S_i$ is a module of $G[\hat{S}]$.
\end{corollary}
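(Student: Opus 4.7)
The plan is a direct application of Theorem~\ref{thm:nice:nei}. To show that $S_i$ is a module of $G[\hat{S}]$, I fix an equivalence class $S_i$ and any vertex $y \in \hat{S} \setminus S_i$; it suffices to establish that $y$ is either adjacent to every vertex of $S_i$ or to none of them.

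First, I pick two arbitrary vertices $x, x' \in S_i$. By the definition of the equivalence relation $\Req$ restricted to $\hat{S}$, we have $N_{A\cup B}(x) = N_{A\cup B}(x')$. Since $x \in S_i$ but $y \notin S_i$, and both $x$ and $y$ lie in $\hat{S}$, the relation $\Req(x,y)$ does not hold, so $N_{A\cup B}(y) \neq N_{A\cup B}(x) = N_{A\cup B}(x')$. Thus both pairs $(x,y)$ and $(x',y)$ satisfy the hypothesis of Theorem~\ref{thm:nice:nei}.

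Next, I invoke Theorem~\ref{thm:nice:nei}: adjacency between two vertices of $\hat{S}$ with distinct neighborhoods in $A \cup B$ is determined entirely by which of the relations $\ReqA, \ReqB, \Rsub, \Rsup$ (each giving an edge) or $\Rinc$ (giving a non-edge) holds between them. Each of these relations depends only on the pair of neighborhoods $(N_{A\cup B}(x), N_{A\cup B}(y))$. Since $N_{A\cup B}(x) = N_{A\cup B}(x')$, the relation that holds for $(x,y)$ is exactly the same as the one that holds for $(x',y)$, and therefore $xy \in E(G)$ if and only if $x'y \in E(G)$. This establishes the module property.

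The argument has essentially no obstacle; it is a short bookkeeping step that harvests the module structure already encoded in Theorem~\ref{thm:nice:nei}. All of the real work, namely ruling out chains of proper inclusions via Lemma~\ref{lem:nice:stack} and producing the set $\hat{S}$ via Lemma~\ref{lem:nice:getS}, has already been done; the corollary merely restates the consequence in the language of modules.
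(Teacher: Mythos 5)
Your proof is correct and matches the paper's approach: the paper states this as an immediate consequence of Theorem~\ref{thm:nice:nei}, precisely because adjacency between $x,y\in\hat{S}$ with $N_{A\cup B}(x)\neq N_{A\cup B}(y)$ is determined solely by which of the relations $\ReqA,\ReqB,\Rsub,\Rsup,\Rinc$ holds, each of which depends only on the neighbourhoods in $A\cup B$. Your write-up simply makes this bookkeeping explicit.
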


As a last step in our analysis of nice structured graphs, we investigate $P_3$'s in the quotient graph of the aforementioned
partition of $G[\hat{S}]$ into modules.

\begin{lemma}\label{lem:nice:P3}
Let $(G,(A,B,S))$ be a nice structured pair, let $\hat{S} \subseteq S$ be the set obtained from Theorem~\ref{thm:nice:nei},
and let $x,y,z \in \hat{S}$ be three distinct vertices belonging to different equivalence classes of the relation $\Req$,
such that $xy \in E(G)$, $yz \in E(G)$, and $xz \notin E(G)$.
Then one of the following holds:
\begin{itemize}
\item $\ReqA(x,y)$ and $\ReqB(y,z)$;
\item $\ReqB(x,y)$ and $\ReqA(y,z)$;
\item $\Rsub(x,y)$ and $\Rsub(z,y)$;
\item $\Rsup(x,y)$ and $\Rsup(z,y)$;
\end{itemize}
\end{lemma}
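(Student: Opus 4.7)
The plan is to apply Theorem~\ref{thm:nice:nei} to each of the three pairs $(x,y)$, $(y,z)$ and $(x,z)$, and then rule out all but the four listed combinations by chasing set inclusions. Since $x,y,z$ lie in $\hat{S}$ and belong to three distinct equivalence classes of $\Req$, the neighbourhoods in $A \cup B$ pairwise differ, so the hypotheses of Theorem~\ref{thm:nice:nei} apply to each of the three pairs.

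From Theorem~\ref{thm:nice:nei} applied to the edges $xy$ and $yz$, I get that exactly one of $\ReqA$, $\ReqB$, $\Rsub$, $\Rsup$ holds for each of the ordered pairs $(x,y)$ and $(y,z)$; this gives $4 \times 4 = 16$ combinations. From Theorem~\ref{thm:nice:nei} applied to the non-edge $xz$, I know that $\Rinc(x,z)$ holds, so $N_A(x)$ and $N_A(z)$ are incomparable with respect to inclusion, and likewise for $N_B(x)$ and $N_B(z)$.

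Next, I would walk through the $16$ combinations. For each forbidden case, concatenating the equalities and strict inclusions forced by the relations between $(x,y)$ and $(y,z)$ immediately produces either a comparability of $N_A(x)$ and $N_A(z)$ or a comparability of $N_B(x)$ and $N_B(z)$, contradicting $\Rinc(x,z)$. For instance, $\ReqA(x,y)\wedge\Rsub(y,z)$ gives $N_A(x)=N_A(y)\subsetneq N_A(z)$; $\Rsub(x,y)\wedge\Rsub(y,z)$ gives $N_A(x)\subsetneq N_A(y)\subsetneq N_A(z)$; $\ReqB(x,y)\wedge\ReqB(y,z)$ gives $N_B(x)=N_B(y)=N_B(z)$; and so on for the other nine forbidden combinations.

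The only four combinations that survive this elimination are precisely the two ``crossed'' equality cases $\ReqA(x,y)\wedge\ReqB(y,z)$ and $\ReqB(x,y)\wedge\ReqA(y,z)$, and the two ``V-shaped'' inclusion cases $\Rsub(x,y)\wedge\Rsup(y,z)$ and $\Rsup(x,y)\wedge\Rsub(y,z)$. The latter two are identical to the outcomes stated in the lemma, once one unfolds the definitional equivalence $\Rsub(z,y) \Leftrightarrow \Rsup(y,z)$ (both say $N_A(z)\subsetneq N_A(y)$ and $N_B(z)\supsetneq N_B(y)$). No property of the niceness of $(G,(A,B,S))$ beyond what is already packaged into Theorem~\ref{thm:nice:nei} is needed; the main ``obstacle'' is purely organisational, namely to keep the $4\times 4$ table tidy and to recognise the symmetry that collapses two of the surviving cases to the stated form.
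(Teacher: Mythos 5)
Your proposal is correct and follows essentially the same route as the paper: apply Theorem~\ref{thm:nice:nei} to the two edges and the non-edge, then eliminate all combinations that force comparability of $N_A(x),N_A(z)$ or of $N_B(x),N_B(z)$, contradicting $\Rinc(x,z)$. The paper merely organises the same case analysis more tersely (splitting on whether one of the edge relations lies in $\ReqA\cup\ReqB$), and your identification $\Rsup(y,z)\Leftrightarrow\Rsub(z,y)$ correctly matches your surviving cases to the stated outcomes.
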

\begin{proof}
Since $xz \notin E(G)$, we have $\Rinc(x,z)$; in particular the sets $N_A(x)$ and $N_A(z)$ are incomparable with respect to inclusion.
If $\ReqA(x,y)$, then the only option from Theorem~\ref{thm:nice:nei} for the edge $yz$ that allows this property to happen is $\ReqB(y,z)$;
symmetrical claims follow if we swap the roles of $A$ and $B$ and/or the roles of $x$ and $z$.
In the remaining case, if neither $(x,y)$ nor $(y,z)$ belongs to $\ReqA \cup \ReqB$, then the only way
to ensure incomparability of $N_A(x)$ and $N_A(z)$ is to have $\Rsub(x,y)$ and $\Rsub(z,y)$ or
$\Rsup(x,y)$ and $\Rsup(z,y)$.
\end{proof}

In the next lemma we remark that Lemma~\ref{lem:nice:P3} already implies that 
the quotient graph of the partition of $G[\hat{S}]$ into equivalence classes
of the relation $\Req$ is Berge. 

\begin{lemma}\label{lem:nice:berge}
Let $(G,(A,B,S))$ be a nice structured pair and
 let $\hat{S} \subseteq S$ be the set obtained from Theorem~\ref{thm:nice:nei}.
Then the quotient graph of the partition of $G[\hat{S}]$ into equivalence
classes of the relation $\Req$ is Berge.
\end{lemma}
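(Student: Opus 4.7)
The plan is to show that $G_q$ has neither an odd hole nor an odd antihole, by lifting any such induced subgraph of $G_q$ to $G[\hat{S}]$ via representative vertices (which is valid because, by Corollary~\ref{cor:nice:modules}, each class $S_i$ is a module, so adjacency in $G_q$ mirrors adjacency in $G[\hat{S}]$) and then applying the strong $P_3$-constraints of Lemma~\ref{lem:nice:P3}.

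\emph{Ruling out odd holes.} Suppose for contradiction that $Y_1, Y_2, \ldots, Y_{2k+1}, Y_1$ is an induced cycle in $G_q$ of odd length $2k+1 \geq 5$, and pick a representative $y_i \in Y_i$ for each $i$. Then $y_1, \ldots, y_{2k+1}$ induce the same cycle in $G[\hat{S}]$, and each consecutive triple $y_{i-1}, y_i, y_{i+1}$ (indices mod $2k+1$) is an induced $P_3$. A direct inspection of Lemma~\ref{lem:nice:P3} shows that its four cases never combine an edge of type $\ReqA$ or $\ReqB$ with an edge of type $\Rsub$ or $\Rsup$; hence all $2k+1$ cycle edges lie in one ``family''. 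If they are all of type $\ReqA$ or $\ReqB$, then cases~(1)--(2) of the lemma force consecutive edges to alternate between $\ReqA$ and $\ReqB$, which is impossible around an odd cycle. If they are all of type $\Rsub/\Rsup$, interpret each such edge as an arrow in the direction of strict growth of $N_A$; cases~(3)--(4) then say that every cycle vertex is either a ``sink'' (both incident cycle-arrows incoming) or a ``source'' (both outgoing). Since sinks and sources must alternate along the cycle, the cycle length is forced to be even, again a contradiction.

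\emph{Ruling out odd antiholes.} The case of length five is already handled, since $C_5$ is self-complementary. For $2k+1 \geq 7$, let $Y_1, \ldots, Y_{2k+1}$ be an induced antihole in $G_q$ (so the non-edges form a cyclic sequence $Y_1Y_2, Y_2Y_3, \ldots, Y_{2k+1}Y_1$) and choose representatives $y_i \in Y_i$. I would apply Lemma~\ref{lem:nice:P3} to the cyclic family of induced $P_3$s of the form $y_i - y_{i+2} - y_{i+1}$ (indices mod $2k+1$), each having outer non-edge $y_iy_{i+1}$ and middle $y_{i+2}$. Each such $P_3$ constrains the types of the two short diagonals $Y_iY_{i+2}$ and $Y_{i+1}Y_{i+2}$, and chaining these constraints around the cyclic family of $2k+1$ non-edges should propagate a family/parity alternation analogous to the one used in the hole case, which again fails for odd length.

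The main obstacle is that the antihole is much richer than a cycle: many more induced $P_3$s coexist inside it, and distinct typings of individual diagonal edges may interact in subtle ways. To handle this cleanly I would either (a) strengthen the alternation argument by simultaneously tracking several cyclic families of $P_3$s (for instance, also $P_3$s along longer diagonals), or (b) identify a small fixed induced subgraph, such as $\overline{P_5}$ on five consecutive antihole vertices, whose six edges admit no consistent typing compatible with Lemma~\ref{lem:nice:P3}; once such a forbidden obstruction is exhibited, the proof concludes because every odd antihole of length at least $7$ must contain it.
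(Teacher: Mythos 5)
Your treatment of odd holes is correct and coincides with the paper's own argument: Lemma~\ref{lem:nice:P3} forces all cycle edges into a single family, and the resulting alternation ($\ReqA/\ReqB$, or sink/source for $\Rsub/\Rsup$) is incompatible with odd length. The genuine gap is the antihole case, which you leave as a sketch with two unexecuted strategies, and neither works as written. First, the triples $y_i, y_{i+2}, y_{i+1}$ are not induced $P_3$s of the antihole: $y_{i+1}$ and $y_{i+2}$ are consecutive on the underlying cycle and hence \emph{non-adjacent} in the antihole, so $y_{i+2}$ cannot serve as the middle vertex of a path ending at $y_{i+1}$. Second, your candidate obstruction $P_5^{\compl}$ admits a typing consistent with the four cases of Lemma~\ref{lem:nice:P3}: taking vertices $1,\dots,5$ with non-edges $12,23,34,45$, assign $\ReqA$ to the edges $14$ and $25$ and $\ReqB$ to $13,15,24,35$; each of the six induced $P_3$s ($1\text{-}4\text{-}2$, $1\text{-}5\text{-}2$, $2\text{-}5\text{-}3$, $3\text{-}1\text{-}4$, $4\text{-}1\text{-}5$, $4\text{-}2\text{-}5$) then falls into the first or second case of the lemma. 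So no contradiction can be extracted from $P_5^{\compl}$ by the purely local typing argument you have in mind (unsurprisingly, since $P_5^{\compl}$ is itself Berge and may legitimately appear in the quotient graph).

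The paper closes the antihole case with a reindexing trick that sidesteps all of this: the alternation argument never uses that the $x_i$ form an induced cycle, only that $x_ix_{i+1} \in E(G)$ and $x_ix_{i+2} \notin E(G)$ cyclically. Given an odd antihole on $y_1,\dots,y_h$ (listed in the cyclic order of the underlying $C_h$), set $x_i = y_{(i\lfloor h/2\rfloor) \bmod h}$. Since $h$ is odd this is a permutation of the vertices; consecutive $x_i,x_{i+1}$ are at cyclic distance $(h-1)/2 \geq 2$ on the underlying cycle and hence adjacent in the antihole, while $x_i$ and $x_{i+2}$ are at distance $h-1 \equiv -1 \pmod h$, hence consecutive on the underlying cycle and non-adjacent in the antihole. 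The sequence $x_1,\dots,x_h$ therefore satisfies exactly the hypotheses of your odd-hole argument, and the same parity contradiction applies. Substituting this observation for your antihole discussion repairs the proof.
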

\begin{proof}
Assume that the set $\hat{S}$ contains a sequence $x_1,x_2,\ldots,x_h$
of vertices for some odd integer $h \geq 5$, such that $x_ix_{i+1} \in E(G)$
and $x_ix_{i+2} \notin E(G)$ for every $1 \leq i \leq h$ and for indices
behaving cyclically modulo $h$. Furthermore, assume that no two vertices
$x_i$ are in relation $\Req$.

Consider the edge $x_1x_2$, and let us consider four cases, depending on
which option of Theorem~\ref{thm:nice:nei} holds for this edge.
By symmetry between the sides $A$ and $B$, we need only consider the cases
$\ReqA(x_1,x_2)$ and $\Rsub(x_1,x_2)$.
If $\ReqA(x_1,x_2)$, then Lemma~\ref{lem:nice:P3} applied to the
$P_3$ $x_1,x_2,x_3$ implies that $\ReqB(x_2,x_3)$. Inductively,
we infer that $\ReqA(x_i,x_{i+1})$ if $i$ is odd
and $\ReqB(x_i,x_{i+1})$ if $i$ is even. However, this leads to
a contradiction as $h$ is odd.
A similar situation happens if $\Rsub(x_1,x_2)$:
we have $\Rsub(x_i,x_{i+1})$ for odd $i$ and $\Rsup(x_i,x_{i+1})$ for even $i$,
again yielding a contradiction

We infer that no such sequence $x_1,x_2,\ldots,x_h$ exists.
However, note that such a sequence is present in any odd hole
in the quotient graph in the question (take the subsequent vertices
on the hole) and is present in any odd anti-hole as well
(if the anti-hole consists of $h$ vertices $y_1,y_2,\ldots,y_h$ in this order,
take $x_i = y_{(i\lfloor h/2 \rfloor)\,\mathrm{mod}\,h}$).
We infer that the quotient graph in the question does not contain any odd hole
nor anti-hole, and is thus Berge.
\end{proof}

Let us now wrap up what our analysis of nice structured graphs implies for the proof of Theorem~\ref{thm:tech-claw-free}.
Recall that we are dealing with a structured pair $(G,(A,B,S))$ where $G$ is claw-free and $C_5$-free.
Claim~\ref{lem:cf:nice} implies that (after the filtering step) we are in fact dealing with a nice structured pair.
Theorem~\ref{thm:nice:nei} provides us with a candidate set $\hat{S}$, that we fix for the remainer of this proof.
Corollary~\ref{cor:nice:modules} implies
that the relation $\Req$ partitions $G[\hat{S}]$ into modules. Moreover, by construction, every such module $S_i$
is contained in a neighbourhood of some vertex from $A$.
It remains to analyse the quotient graph of this partition.

\subsection{The quotient graph: Excluding a diamond}

Clearly, the quotient graph of the partition of $G[\hat{S}]$ into equivalence classes of the relation $\Req$
is claw-free, since $G$ is claw-free. In the rest of this section we show that it is also diamond-free. This, together
with a characterization from~\cite{KloksKM94,MetelskyT03} showing that the class of (claw,diamond)-free graphs is exactly
the class of line graphs of triangle-free graphs, concludes the proof of Theorem~\ref{thm:tech-claw-free}.

We start by showing that the last two cases of Lemma~\ref{lem:nice:P3} cannot appear if $G$ is claw-free and $C_5$-free.
\begin{claim}\label{lem:cf:P3}
Let $x,y,z$ be as in the statement of Lemma~\ref{lem:nice:P3}.
Then either
$\ReqA(x,y)$ and $\ReqB(y,z)$ or
$\ReqB(x,y)$ and $\ReqA(y,z)$.
That is, the last two cases cannot happen.
\end{claim}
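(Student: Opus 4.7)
The plan is to rule out the two ``nested'' options from Lemma~\ref{lem:nice:P3}, namely $\Rsub(x,y) \wedge \Rsub(z,y)$ and $\Rsup(x,y) \wedge \Rsup(z,y)$. By the symmetry between the roles of $A$ and $B$ built into the definitions of $\Rsub$ and $\Rsup$, I only need to exclude one of them, say the first. So I assume $N_A(x), N_A(z) \subsetneq N_A(y)$ and $N_B(x), N_B(z) \supsetneq N_B(y)$. Since $xz \notin E(G)$ and $x,z$ lie in different $\Req$-classes, Theorem~\ref{thm:nice:nei} gives $\Rinc(x,z)$, so in particular the $B$-neighborhoods $N_B(x)$ and $N_B(z)$ are incomparable.

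Next I pick witnesses: $p' \in N_B(x) \setminus N_B(z)$ and $q' \in N_B(z) \setminus N_B(x)$ (using $\Rinc(x,z)$), and $r \in N_B(y)$ (nonempty because $N(B)=S$ in an $\eps$-structured pair). Note that since $N_B(y) \subseteq N_B(z)$, the vertex $p'$ also fails to lie in $N_B(y)$, so $yp' \notin E(G)$; symmetrically $yq' \notin E(G)$. Moreover $r \in N_B(y) \subseteq N_B(x) \cap N_B(z)$, and by the swap-version of Claim~\ref{lem:cf:Anei} the sets $N_B(x)$ and $N_B(z)$ are cliques, so $rp', rq' \in E(G)$; clearly $ry \in E(G)$ as well.

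Now I split on whether $p'q'$ is an edge. If $p'q' \in E(G)$, I claim that $\{y,x,p',q',z\}$ induces a $C_5$ in $G$: the edges $xy, yz, xp', p'q', q'z$ are all present by construction, while the required non-edges $xz$, $xq'$ (since $q' \notin N_B(x)$), $p'z$ (since $p' \notin N_B(z)$), $yp'$ and $yq'$ are all absent as just observed. This contradicts $G$ being $C_5$-free. Therefore $p'q' \notin E(G)$; but then $\{y, p', q'\}$ is an independent triple (all three non-edges $yp', yq', p'q'$ were just established), and since $r$ is adjacent to each of $y, p', q'$, the tuple $(r; y, p', q')$ is a claw, contradicting $G$ being claw-free.

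The main obstacle I anticipate is simply the bookkeeping: checking all of the adjacencies and non-adjacencies for both the $C_5$ and the claw candidate, and in particular making sure I have the right witnesses on the correct side ($A$ versus $B$) so that the chain of inclusions $N_B(y)\subseteq N_B(x)\cap N_B(z)$ propagates the non-adjacencies $yp', yq' \notin E(G)$ that both arguments rely on. Once this is set up correctly, the case analysis is just the two options ``$p'q' \in E(G)$'' versus ``$p'q' \notin E(G)$'' above.
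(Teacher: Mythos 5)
Your proof is correct and follows essentially the same strategy as the paper: assume the nested case $\Rsub(x,y)\wedge\Rsub(z,y)$, extract witnesses from the incomparability $\Rinc(x,z)$ given by Theorem~\ref{thm:nice:nei}, and build a forbidden induced subgraph using Claim~\ref{lem:cf:Anei} and a common neighbour of $y$. The only difference is that the paper takes its witnesses $p\in N_A(x)\setminus N_A(z)$, $q\in N_A(z)\setminus N_A(x)$ on the $A$ side, where the inclusions $N_A(x),N_A(z)\subsetneq N_A(y)$ place both inside $N_A(y)$ so that Claim~\ref{lem:cf:Anei} forces $pq\in E(G)$ and yields a $C_5$ outright, whereas your $B$-side witnesses do not share such a common neighbourhood and hence require the (perfectly valid) case split between a $C_5$ and a claw.
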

\begin{proof}
Assume the contrary; by swapping the sides $A$ and $B$ if needed, we can assume that $\Rsub(x,y)$ and $\Rsub(z,y)$.
Since $xz \notin E(G)$, the sets $N_A(x)$ and $N_A(z)$ are incomparable with respect to inclusion; let $p \in N_A(x) \setminus N_A(z)$
and $q \in N_A(z) \setminus N_A(x)$. By Claim~\ref{lem:cf:Anei}, we have $pq \in E(G)$, since $p,q \in N_A(y)$.
Let $s \in N_B(y)$ be any vertex. Observe that $\{p,x,s,z,q\}$ induce a $C_5$ in $G$, a contradiction.
\end{proof}

We conclude with an observation that without the two cases of Lemma~\ref{lem:nice:P3} excluded in Claim~\ref{lem:cf:P3},
we cannot have a diamond in the quotient graph.

\begin{claim}\label{lem:cf:diamond}
The quotient graph of the partition of $G[\hat{S}]$ into equivalence classes of the relation $\Req$
is diamond-free.
\end{claim}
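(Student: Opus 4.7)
The plan is to argue by contradiction: suppose the quotient graph contains a diamond. Then there exist four vertices $a,b,c,d \in \hat{S}$, each lying in a different equivalence class of $\Req$, with $ab,ac,bc,bd,cd \in E(G)$ and $ad \notin E(G)$. Both $a$-$b$-$d$ and $a$-$c$-$d$ are induced $P_3$'s satisfying the hypotheses of Claim~\ref{lem:cf:P3}, so each of these triples falls into one of the two cases listed there. By the $A \leftrightarrow B$ symmetry we may assume without loss of generality that $\ReqA(a,b)$ and $\ReqB(b,d)$, and we analyse the two possibilities for the triple $(a,c,d)$.

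The first subcase is $\ReqA(a,c)$ and $\ReqB(c,d)$. Chaining equalities gives $N_A(b)=N_A(a)=N_A(c)$ and $N_B(b)=N_B(d)=N_B(c)$, so $\Req(b,c)$. This contradicts the assumption that $b$ and $c$ belong to distinct equivalence classes. The second subcase is $\ReqB(a,c)$ and $\ReqA(c,d)$, which gives $N_A(b)=N_A(a)$, $N_A(c)=N_A(d)$, $N_B(a)=N_B(c)$ and $N_B(b)=N_B(d)$. Since $ad \notin E(G)$, Theorem~\ref{thm:nice:nei} forces $\Rinc(a,d)$, so $N_A(a), N_A(d)$ are incomparable and $N_B(a), N_B(d)$ are incomparable. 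Consequently $N_A(b)$ and $N_A(c)$ are incomparable. On the other hand, $bc \in E(G)$ and $b,c$ lie in different classes, so by Theorem~\ref{thm:nice:nei} one of $\ReqA(b,c)$, $\ReqB(b,c)$, $\Rsub(b,c)$, $\Rsup(b,c)$ must hold. The cases $\ReqA(b,c)$, $\Rsub(b,c)$ and $\Rsup(b,c)$ each force $N_A(b)$ and $N_A(c)$ to be comparable (or even equal), contradicting the incomparability we just established. The remaining case $\ReqB(b,c)$ gives $N_B(b)=N_B(c)$; combined with $N_B(b)=N_B(d)$ and $N_B(a)=N_B(c)$ this yields $N_B(a)=N_B(d)$, again contradicting $\Rinc(a,d)$.

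I do not expect any substantive obstacle: the argument is a short case analysis that extracts a contradiction from the rigidity imposed by Claim~\ref{lem:cf:P3} and Theorem~\ref{thm:nice:nei}. The only mildly subtle point is ruling out $\ReqB(b,c)$ in the second subcase, which requires chaining the known equalities in $B$ back to contradict the incomparability guaranteed by $\Rinc(a,d)$ rather than directly using the incomparability in $A$.
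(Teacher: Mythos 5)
Your proposal is correct and follows essentially the same argument as the paper: apply Claim~\ref{lem:cf:P3} to the two $P_3$'s through the non-edge of the diamond, normalize the first by the $A\leftrightarrow B$ symmetry, and derive either $\Req$ or $\Rinc$ between the two adjacent middle vertices, contradicting the class-distinctness or Theorem~\ref{thm:nice:nei}. Your closing case analysis ruling out $\ReqA$, $\ReqB$, $\Rsub$, $\Rsup$ for the edge $bc$ is just a more explicit way of saying that the derived neighbourhood identities force $\Rinc(b,c)$, which is exactly the paper's final step.
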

\begin{proof}
Assume the contrary.
Let $x,y,s,t \in \hat{S}$ be four distinct vertices that belong to four different equivalence classes of the relation $\Req$.
Furthermore, assume that $G[\{x,y,s,t\}]$ is isomorphic to a diamond with $xy \notin E(G)$.
By swapping the sides $A$ and $B$ if needed, by Claim~\ref{lem:cf:P3} applied to the triple $x,s,y$,
we can assume that $\ReqA(x,s)$ and $\ReqB(y,s)$.

Let us now consider two cases of Claim~\ref{lem:cf:P3} applied to the triple $x,t,y$.
If $\ReqA(x,t)$ and $\ReqB(y,t)$, then we have $N_A(s) = N_A(x) = N_A(t)$ and $N_B(s) = N_B(y) = N_B(t)$, 
giving $\Req(s,t)$, a contradiction.
If $\ReqB(x,t)$ and $\ReqA(y,t)$, then we have
\begin{equation}\label{eq:cf:diamond}
N_A(s) = N_A(x), \quad N_A(t) = N_A(y), \quad N_B(s) = N_B(y), \quad N_B(t) = N_B(x).
\end{equation}
Since $xy \notin E(G)$, we have by Theorem~\ref{thm:nice:nei} that $\Rinc(x,y)$.
By~\eqref{eq:cf:diamond} this
implies that $\Rinc(s,t)$, a contradiction to the assumption $st \in E(G)$ and Theorem~\ref{thm:nice:nei}.
\end{proof}

\section{Proof of Theorem~\ref{thm:tech-hooks}}\label{sec:structuralTheorem}
\newcommand{\favA}{\pi_A}
\newcommand{\favB}{\pi_B}
\newcommand{\muA}{\mu_A}
\newcommand{\muB}{\mu_B}

In this section we prove Theorem~\ref{thm:tech-hooks} using the same proof outline as for Theorem~\ref{thm:tech-claw-free}
from the previous section. In particular, after a filtering step we will prove that the $\eps$-structured pair at hand is actually  nice (c.f.~Definition~\ref{def:nice}), 
which allows us to apply the tools developed in Section~\ref{sec:nice}.

It will be convenient for the proof to split the constant $\eps$ into three constants $\degeps$, $\bheps$, and $\acteps$ in the following way. We show that, for every $k \geq 0$, if $\degeps, \bheps, \acteps$ are small enough positive constants that satisfy $2\degeps < \acteps$ then in every $\degeps$-structured pair $(G,(A,B,S))$ there exists either 
\begin{enumerate}
\item an anti-adjacent pair $(P,Q)$ in $G$ with $P,Q \subseteq S$, $|P|,|Q| \geq \bheps |S|$; or 
\item an active $\ell$-hook $(X,R)$ in $G$ with $\ell \geq k$, $R \subseteq S$, and $|R| \geq \acteps |S|$; or
\item a subset $\hat{S} \subseteq S$ with $|\hat{S}| \geq  |S|/5$ 
and a partition $\hat{S} = S_1 \uplus S_2 \uplus \ldots \uplus S_m$, for some $m\geq 2$, 
such that
\begin{enumerate}
\item $|S_i|\leq \degeps |S|$ for every $1\leq i \leq m$; 
\item every set $S_i$ is a module of $G[\hat{S}]$; and 
\item the quotient graph of this partition of the vertex set
of $G[\hat{S}]$ is a claw-free Berge graph.
\end{enumerate}
\end{enumerate}
Instead of giving an explicit formula for $\degeps$, $\bheps$, and $\acteps$, we will state a number of inequalities that these constants should satisfy in the course of the proof. Every such inequality will be true for sufficiently small positive constants; in particular, taking $\degeps = \bheps  = \frac{1}{200(k+10)}$ and $\acteps = \frac{1}{100(k+10)}$ will suffice.

For two disjoint vertex sets $Q$ and $D$ in a graph $G$, $\reach{Q}{D}$ denotes the set of vertices $v$ of $D$ such that in the graph $G[Q \cup D]$ there is a path from some vertex in $Q$ to $v$.
Equivalently $\reach{Q}{D} = C \cap D$, where $C$ is the union of all components of $G[Q \cup D]$ that contain at least one vertex of $Q$.


Let $(G,(A,B,S))$ be an $\degeps$-structured graph for some (small) constant $\degeps > 0$.

\subsection{Filtering}

In the proof of Theorem~\ref{thm:tech-hooks} we need a stronger filtering step than the one used for Theorem~\ref{thm:tech-claw-free}:
we need not only to discard vertices of $S$ that are adjacent to the entire set $A$ or $B$, but all vertices that are adjacent
to a large fraction of $A$ or $B$. Furthermore, we need to use a non-uniform measure on $A$ and $B$, as defined below.

For every $x \in S$, we fix one neighbour $\favA(x) \in N_A(x)$ and one neighbour $\favB(x) \in N_B(x)$.
We define a probability measure $\muA$ on $A$ by $\muA(X) = |\favA^{-1}(X)|/|S|$. That is, 
the measure $\muA$ corresponds to a random experiment where we choose a vertex $x \in S$ uniformly at random, and output $\favA(x)$.
Similarly we define a probability measure $\muB$ on $B$ using the function $\favB$.

Let $S_A = \{x \in S: \muA(N_A(x)) \geq 10\degeps\}$ and similarly let $S_B = \{x \in S: \muB(N_B(x)) \geq 10\degeps\}$. 
A standard averaging argument shows the following. 
\begin{claim}\label{lem:h:filter}
$|S_A|,|S_B| \leq |S|/10$.
\end{claim}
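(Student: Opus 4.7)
The plan is to run the same averaging argument as in Claim 5.1 of the warm-up, but adapted to the weighted measure $\muA$ instead of the uniform one. The key new ingredient is that by construction, $\muA$ puts mass on $A$ via the ``preimage under $\favA$'' experiment, so one can couple it with a uniform draw from $S$.

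More precisely, I would consider the following random experiment: draw $x \in S$ uniformly at random, and independently draw $y \in A$ according to the measure $\muA$. I would then estimate the probability $p := \Pr[xy \in E(G)]$ in two ways. For the upper bound, I would write
\[
p \;=\; \sum_{y \in A} \muA(y)\,\frac{|N_S(y)|}{|S|} \;\leq\; \degeps,
\]
using the fact that in an $\degeps$-structured pair every vertex satisfies $|N_S(y)| \leq \degeps |S|$. For the lower bound, I would condition on $x$: if $x \in S_A$, then by definition $\Pr[y \in N_A(x) \mid x] = \muA(N_A(x)) \geq 10\degeps$, so dropping the contribution from $x \notin S_A$ gives
\[
p \;\geq\; \frac{|S_A|}{|S|} \cdot 10\degeps.
\]
Combining the two bounds yields $|S_A| \leq |S|/10$. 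The argument for $S_B$ is identical with the roles of $A$ and $B$ swapped.

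There is no real obstacle here: the measure $\muA$ was designed precisely so that the ``average vertex'' on the $A$-side has $\muA$-mass controlled by its $S$-degree, and the $\degeps$-structured assumption gives exactly the degree bound needed. The only mild subtlety compared with the uniform version in the claw-free section is that one must remember to use the weighted measure rather than the counting measure on $A$; but since $\muA$ is a probability measure, the upper bound $\sum_y \muA(y) |N_S(y)|/|S|$ still simplifies cleanly.
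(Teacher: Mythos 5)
Your argument is exactly the paper's: the same product experiment (uniform $x\in S$, $\muA$-distributed $y\in A$), the same upper bound on $\Pr[xy\in E(G)]$ via the degree condition $|N_S(y)|\leq \degeps|S|$ from the $\degeps$-structured pair, and the same lower bound by conditioning on $x\in S_A$. The proof is correct as written.
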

\begin{proof}
Consider the following random experiment: independently choose $x \in S$ uniformly at random and $p \in A$ according to the measure $\muA$. 
Since every vertex in $A$ is adjacent to at most $\degeps |S|$ vertices of $S$,
the probability that $px \in E(G)$ is at most $\degeps$.
On the other hand, conditioning on $x \in S_A$, we have $px \in E(G)$ with
probability at least $10\degeps$ by the definition of $S_A$. Consequently,
the probability that $x \in S_A$ is at most $1/10$. The proof for $S_B$ is symmetric. 
\end{proof}

By Claim~\ref{lem:h:filter}, we have $|S_A \cup S_B| \leq |S|/5$.
Consequently, by considering the pair $(G \setminus (S_A \cup S_B),(A,B,S \setminus (S_A \cup S_B)))$ instead of $(G,(A,B,S))$, and by suitably adapting the constant $\degeps$, in the rest of the proof we can assume that our $\degeps$-structured pair $(G,(A,B,S))$ has the additional property that
\begin{equation}\label{eq:h:filter}
\mbox{for all } x \in S: \  \muA(N_A(x)) < \degeps  \mbox{ and } \muB(N_B(x)) < \degeps. 
\end{equation}
However, we now need to exhibit a set $\hat{S}$ of size at least $|S|/4$
(instead of $|S|/5$ in the statement of Theorem~\ref{thm:tech-hooks}).

In the remainder of the proof, 
let us assume that, for some sufficiently
small constants $\degeps$, $\bheps$, and $\acteps$, our input structured graph $(G,(A,B,S))$ does not admit
the desired anti-adjacent pair nor the desired active hook; our goal
is to prove that $(G,(A,B,S))$ is nice and use the results of Section~\ref{sec:nice} to obtain the set $\hat{S}$.
Observe that~\eqref{eq:h:filter} already
implies Property (NE1) for $(G,(A,B,S))$.

\subsection{A generic claim to find an active hook}

We will encounter several situations that allow us to find an active hook in an $\eps$-structured pair. 
We bundle the commonalities in the following claim. 

\begin{claim}[\ABsym]\label{lem:h:grow} 
Assume there exist pairwise disjoint sets $Z,Q,D \subseteq V(G)$
such that:
\begin{enumerate}[(i)]
\item $Q,D \subseteq A$;
\item $(Z, D)$ is an anti-adjacent pair; 
\item for every $q \in Q$, there exists an integer $i \geq 0$
and an $i$-hook in $G[\{q\} \cup Z]$ with $q$ being the active vertex;
\label{p:grow:3}
\item $(|Z|+k)\degeps + (k+3)\bheps + \acteps < 1$;\label{p:grow:4}
\end{enumerate}
Then $\muA(\reach{Q}{D}) \leq |Z|\degeps + \bheps$.
\end{claim}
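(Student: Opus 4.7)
The plan is to argue by contradiction. Assume $\muA(\reach{Q}{D}) > |Z|\degeps + \bheps$, and write $D' := \reach{Q}{D}$. I will derive a contradiction with the standing assumption of the proof of Theorem~\ref{thm:tech-hooks} that $(G,(A,B,S))$ admits neither an anti-adjacent pair in $S$ of size at least $\bheps|S|$, nor an active $\ell$-hook $(X,R)$ with $\ell \geq k$, $R \subseteq S$ and $|R| \geq \acteps|S|$.

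The first step is a simple filtering argument. Since $(G,(A,B,S))$ is $\degeps$-structured, every $z \in Z$ satisfies $|N_S(z)| \leq \degeps|S|$, so at most $|Z|\degeps|S|$ vertices of $S$ are adjacent to some vertex of $Z$. As $\muA(D') \cdot |S| = |\favA^{-1}(D')|$ exceeds $(|Z|\degeps + \bheps)|S|$ by the contradiction hypothesis, the set
\[ S_0 := \{x \in S : \favA(x) \in D' \text{ and } N(x) \cap Z = \emptyset\} \]
has size $|S_0| > \bheps|S|$.

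The heart of the argument is to combine $S_0$ with the reachability structure on $D'$ to produce a contradictory object. Pick $q \in Q$ together with its $i$-hook $X_q$ in $\{q\} \cup Z$. Following the BLT-style path-growing idea outlined in the introduction, I would iteratively grow an induced path $q = d_0, d_1, \ldots, d_k$ in $G[\{q\} \cup D]$ of length $k$; at each of the $k$ iterations either the candidate set for the next vertex is large enough to continue, or else an application of Lemma~\ref{lem:large-comp} produces an anti-adjacent pair in $S$ of size at least $\bheps|S|$, which is the first desired contradiction. Since $(Z, D)$ is anti-adjacent and each $d_j$ lies in $D$, none of $d_1, \ldots, d_k$ is adjacent to $X_q \cap Z$, so $X' := X_q \cup \{d_1, \ldots, d_k\}$ induces an $(i+k)$-hook with active vertex $d_k$.

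Finally, I further filter $S_0$, removing at most $(k+1)\degeps|S|$ vertices adjacent to some vertex of $\{q, d_1, \ldots, d_k\}$, leaving a substantial subset $S_1$ none of whose vertices sees $X' \setminus \{d_k\}$. A concluding application of Lemma~\ref{lem:large-comp} to $G[S_1]$---with the path-growing procedure arranged so that $d_k$ is adjacent to a large component of $G[S_1]$---yields either yet another anti-adjacent pair of size at least $\bheps|S|$, or a connected $R \subseteq S$ with $|R| \geq \acteps|S|$, $R$ adjacent to $d_k$, and $R$ disjoint from $N(X' \setminus \{d_k\})$; in the latter case $(X', R)$ is an active $(i+k)$-hook with $R \subseteq S$, $|R| \geq \acteps|S|$ and $i+k \geq k$, the second desired contradiction. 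The main technical difficulty will be orchestrating the path growth so that $d_k$ is adjacent to a large component of $G[S_1]$, i.e.\ coupling the $D$-side growing with the $S$-side connectivity; the inequality $(|Z|+k)\degeps + (k+3)\bheps + \acteps < 1$ of (iv) precisely budgets the filtering losses ($|Z|\degeps$ and $k\degeps$), the up to $k+3$ thresholds of $\bheps|S|$ used across applications of Lemma~\ref{lem:large-comp}, and the final $\acteps|S|$ reserved for the size of $R$.
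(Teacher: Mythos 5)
There is a genuine gap, and it sits exactly where you flag ``the main technical difficulty''. You grow the $k$-step induced path $d_1,\dots,d_k$ inside $D\subseteq A$, but the hypotheses of the claim give you no control over $G[D]$: it may be disconnected, its vertices may have large degrees inside $D$, and---crucially---a failure of the growing procedure in $D$ does not produce an anti-adjacent pair \emph{in $S$} of size $\bheps|S|$, which is the only kind of anti-adjacent pair the standing assumption forbids (Lemma~\ref{lem:large-comp} applied inside $D$ would only give a pair in $A$). Moreover, the endpoint $d_k$ lies in $A$, and nothing guarantees it has even a single neighbour in $S$ (the condition $N(A)=S$ constrains $N(A)$ as a set, not each individual vertex of $A$), let alone that it is adjacent to a large connected component of $G[S_1]$; you offer no mechanism for arranging this. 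The paper avoids both problems by never growing a long path in $D$: since $\muA(\reach{Q}{D})>|Z|\degeps+\bheps$, some $x$ in the large component $S_0$ of $G[S\setminus N_S[Z]]$ has $\favA(x)\in\reach{Q}{D}$, so there is a \emph{shortest} path $L$ from $Q$ through $D$ into $S_0$; appending the internal vertices of $L$ to the hook yields an active $i$-hook whose companion set $R_i=S_0$ lies in $S$ and meets $X_i$ only at the last $D$-vertex of $L$ (minimality of $L$ kills all other adjacencies), and the Bousquet--Lagoutte--Thomass\'e growing is then carried out entirely inside $S_0$, where condition~4 of an $\degeps$-structured pair and Lemma~\ref{lem:large-comp} actually apply.

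There is also a quantitative problem. Your $S_0$ is only guaranteed to have size $>\bheps|S|$; after removing the up to $(k+1)\degeps|S|$ vertices adjacent to $\{q,d_1,\dots,d_k\}$ you cannot even invoke Lemma~\ref{lem:large-comp} (which needs measure $>3\bheps$), and the surviving component certainly need not have size $\acteps|S|$. The paper instead takes $S_0$ to be the largest component of $G[S\setminus N_S[Z]]$ and uses Lemma~\ref{lem:large-comp} to bound $|S\setminus S_0|\le(|Z|\degeps+\bheps)|S|$, so that hypothesis~(iv) gives $|S_0|>(k\degeps+(k+2)\bheps+\acteps)|S|$; it is this slack that pays for the at most $k$ growing steps (each costing $(\degeps+\bheps)|S|$) while still leaving $|R_k|>\acteps|S|$ at the end.
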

\begin{proof}
For a contradiction, assume that 
$\muA(\reach{Q}{D}) > |Z|\degeps + \bheps$.
Our goal is to construct an active $\ell$-hook 
$(X,R)$ with $\ell \geq k$, $R \subseteq S$, and $|R| \geq \acteps |S|$.

Let $S_0$ be the vertex set of the largest component of
$G[S \setminus N_S[Z]]$, and let $M = S \setminus (S_0 \cup N_S[Z])$.
Note that $|N_S[Z]| \leq \degeps |Z| |S|$ by Property 4.~of an $\degeps$-structured pair, 
and so
we have $|S_0 \cup M| > 3\bheps|S|$ by assumption~\eqref{p:grow:4}.
Thus, Lemma~\ref{lem:large-comp} implies that $|M| \leq \bheps |S|$.
Hence, we have
\begin{equation}\label{eq:grow:1}
|S \setminus S_0| \leq (|Z|\degeps + \bheps)|S|
\end{equation}
and, by assumption~\eqref{p:grow:4},
\begin{equation}\label{eq:grow:2} 
|S_0| > (k\degeps + (k+2)\bheps + \acteps)|S|.
\end{equation}

Since $\muA(\reach{Q}{D}) > |Z|\degeps + \bheps$,
and by the definition of $\muA$, there exists
$x \in S_0$ with $\favA(x) \in \reach{Q}{D}$. In particular, there exists a path from $Q$ to $S_0$ with all internal vertices in $D$.
Let $L$ be a shortest such path; note that it is possible that $L$ consists of a single edge, but $L$ contains at least two vertices since $Q \subseteq A$ and $S_0 \subseteq S$.

Let $q$ be the endpoint of $L$ in $Q$, $y$ be the second endpoint of $L$, and $x$ be the neighbour of $y$ on $L$ (it is possible that $x=q$).
Using assumption~\eqref{p:grow:3},
we find an integer $i_0 \geq 0$ and an $i_0$-hook with vertex set $X \subseteq \{q\} \cup Z$ and active vertex $q$. 
We lengthen this hook with the path $L$: define $i := i_0 + |V(L)|-2$,
$X_i := X \cup (V(L) \cap D)$, and $R_i := S_0$. 
Observe that, since $Z$ and $D \cup S_0$ are fully anti-adjacent, and $L$ is a shortest path from $Q$ to $S_0$ via $D$, we have that $G[X_i]$ is an $i$-hook
with $x$ being the active vertex, and $N(R_i) \cap X_i = \{x\}$
Consequently, $(X_i,R_i)$ is an active $i$-hook.

If $i \geq k$, then~\eqref{eq:grow:2} ensures that $(X_i,R_i)$ is a desired
active hook, a contradiction.
Otherwise, we use the path-growing argument of~\cite{blt2015}
to turn it into an active $k$-hook, using the slack in~\eqref{eq:grow:2} in the process.
More formally, we build a sequence of active $j$-hooks $(X_j,R_j)$
for $j=i,i+1,\ldots,k$, with $X_i \subset X_{i+1} \subset \ldots \subset X_k$,
$S_0 \supseteq R_i \supset R_{i+1} \supset \ldots \supset R_k$, and additionally
maintain that
\begin{equation}\label{eq:grow:inv}
|R_j| > \left((k-j)(\degeps + \bheps) + 2\bheps + \acteps\right)|S|.
\end{equation}
Clearly,~\eqref{eq:grow:inv} holds for $j=i$ (using~\eqref{eq:grow:2} and the fact that $R_i=S_0$),
while for $j=k$, \eqref{eq:grow:inv} gives the desired lower bound on $|R_k|$
for the active $k$-hook $(X_k,R_k)$.

Assume that an active $j$-hook $(X_j,R_j)$ has been constructed for some $j < k$.
Let $v_j \in X_j$ be the active vertex of this hook.
Let $R_{j+1}$ be the vertex set of the largest component of
$G[R_j \setminus N(v_j)]$;
by~\eqref{eq:grow:inv}, we have that $|R_j \setminus N(v_j)| > 3\bheps|S|$
as $R_j \subseteq S_0$, and Lemma~\ref{lem:large-comp} asserts that
$|R_{j+1}| \geq |R_j| - \degeps|S| - \bheps|S|$, proving~\eqref{eq:grow:inv}
for $R_{j+1}$. We take $v_{j+1}$ to be any vertex of $R_j \cap N(v_j) \cap N(R_{j+1})$; such a vertex exists by the connectivity of $G[R_j]$ and the assumption
$v_j \in N(R_j)$.
Let $X_{j+1} = X_j \cup \{v_{j+1}\}$.
A direct check shows
that the choice of $v_{j+1}$, $X_{j+1}$, and $R_{j+1}$ ensures that
$G[X_{j+1}]$ is a $(j+1)$-hook with active vertex $v_{j+1}$, and $N(R_{j+1}) \cap X_{j+1} = \{v_{j+1}\}$,
finishing the description of the construction of $(X_{j+1},R_{j+1})$.
Hence, $(X_k,R_k)$ is an active $k$-hook with $R_k \subseteq S$ and $|R_k| > \acteps |S|$, a contradiction.
This concludes the proof of the claim.
\end{proof}

In the remainer of the proof we assume that the constants
$\degeps$, $\bheps$, and $\acteps$ are sufficiently small such that
\begin{equation}\label{eq:grow:cover}
2\degeps + 3(6\degeps + \bheps) < 1.
\end{equation}
In particular this means that
assumption~\eqref{p:grow:4} of Claim~\ref{lem:h:grow} is satisfied as long as $|Z| \leq 6$.
It also means that the bound in the conclusion of Claim~\ref{lem:h:grow}
is small for $|Z| \leq 6$;
specifically, we can assume that 
any two neighbourhoods $N_A(x), N_A(y)$ of vertices $x,y \in S$, together with any
three sets $\reach{Q}{D} \subseteq A$
obtained from Claim~\ref{lem:h:grow} (applied with $|Z| \leq 6$), cannot cover the entire set $A$.

\subsection{Neighbourhoods in $A \cup B$}

\subsubsection{Non-edges inside an $A$-neighbourhood}

We start with proving an analogue of Claim~\ref{lem:cf:Anei}. 
\begin{claim}[\ABsym]\label{lem:h:Anei}
For every $x \in S$ and $p,q \in N_A(x)$, if $pq \notin E(G)$ then
$N_A(p) \setminus N_A(x) = N_A(q) \setminus N_A(x)$.
\end{claim}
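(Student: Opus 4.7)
The proof will proceed by contradiction. Suppose the conclusion fails; by swapping the roles of $p$ and $q$ if necessary, there exists $r \in N_A(p) \setminus N_A(x)$ with $r \notin N_A(q)$. Pick any vertex $z \in N_B(x)$, which exists since $N(B) = S$ implies $N_B(x) \neq \emptyset$.

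The first step is to verify that the five vertices $\{z, x, p, r, q\}$ induce a $1$-hook in $G$. Using the assumptions $pq, qr, rx \notin E(G)$ together with the fact that $A$ and $B$ are fully anti-adjacent (so $pz, qz, rz \notin E(G)$), the only edges in $G[\{z,x,p,r,q\}]$ are $zx$, $xp$, $xq$, and $pr$. This is exactly the $4$-vertex path $q, x, p, r$ with a pendant $z$ attached to $x$, i.e.\ a copy of $\hook{1}$ with active vertex $q \in A$.

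The second step is to apply Claim~\ref{lem:h:grow} to this configuration. Set $Z := \{z, x, p, r\}$, $Q := \{q\}$, and $D := A \setminus (N_A[x] \cup N_A[p] \cup N_A[r] \cup \{q\})$. All four hypotheses are then readily verified: $Q$ and $D$ are subsets of $A$; the pair $(Z, D)$ is anti-adjacent because $z \in B$ contributes no $A$-edges and $D$ was defined to avoid the $A$-neighbourhoods of $x$, $p$, and $r$; the $1$-hook above witnesses condition~(iii) with $q$ as active vertex; and condition~(iv) holds because $|Z| = 4 \leq 6$ by the standing assumption~\eqref{eq:grow:cover}. The conclusion gives $\muA(\reach{\{q\}}{D}) \leq 4\degeps + \bheps$.

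The main obstacle is turning this upper bound into a genuine contradiction. The plan is to leverage the remark following~\eqref{eq:grow:cover}, which says that $A$ cannot be $\muA$-covered by two $S$-vertex neighbourhoods in $A$ together with three reach sets obtained from Claim~\ref{lem:h:grow}. Combined with the filtering bound $\muA(N_A(x)) < \degeps$ and possibly symmetric applications (using the \ABsym\ form, or swapping the roles of $p$ and $q$), one argues that some $y \in S$ must have $\favA(y)$ outside all these small sets; such a $y$ yields a new structural witness that can be plugged back into Claim~\ref{lem:h:grow} to force the existence of an active $k$-hook of size at least $\acteps|S|$, contradicting the standing assumption. Making this covering argument precise, and keeping the various measure bounds consistent, is the technical heart of the proof.
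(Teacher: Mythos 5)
Your construction of the $1$-hook on $\{z,x,p,r,q\}$ with active vertex $q$ is correct and matches the configuration the paper uses (the paper takes $Z=\{p,q,r,x\}$, notes that $G[Z]$ is a $P_4$ with $x$ internal, and attaches the fifth vertex from $N_B(x)$). The genuine gap is in how you invoke Claim~\ref{lem:h:grow} and in the absence of an actual contradiction. You apply the claim on the $A$-side with $Q=\{q\}$ and $D = A \setminus (N_A[x]\cup N_A[p]\cup N_A[r]\cup\{q\})$, but $N_A[p]$ and $N_A[r]$ are neighbourhoods of vertices \emph{of $A$} inside $A$, and nothing in the definition of an $\degeps$-structured pair or in the filtering step~\eqref{eq:h:filter} bounds their $\mu_A$-measure: only $|N_S[v]|$ and $\mu_A(N_A(x))$ for $x\in S$ are controlled. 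Hence your $D$ may have measure arbitrarily close to $0$, the bound $\mu_A(\reach{\{q\}}{D})\le 4\degeps+\bheps$ is then vacuous, and the covering argument you sketch in the last paragraph cannot be completed: the remark after~\eqref{eq:grow:cover} only allows covering $A$ by neighbourhoods $N_A(\cdot)$ of vertices of $S$ together with reach sets, not by neighbourhoods of vertices of $A$. As written, the proof does not close.

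The paper's resolution is to apply Claim~\ref{lem:h:grow} with the roles of $A$ and $B$ swapped: take $Z=\{p,q,r,x\}$, $Q=N_B(x)$ and $D=B\setminus N_B(x)$. Then $(Z,D)$ is anti-adjacent for free (since $p,q,r\in A$ and $x$ has no neighbour in $B\setminus N_B(x)$), every $z\in N_B(x)$ is the active vertex of the $1$-hook $G[\{z\}\cup Z]$, and the claim yields $\mu_B(\reach{N_B(x)}{B\setminus N_B(x)})\le 4\degeps+\bheps$. Connectivity of $G[B]$ together with $N(B)=S$ forces the reach set to be all of $B\setminus N_B(x)$, which together with $\mu_B(N_B(x))<\degeps$ contradicts~\eqref{eq:grow:cover}. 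Some such side-swap is the missing idea needed to turn your hook into a contradiction.
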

\begin{proof}
By contradiction, and using the symmetry between vertices $p$ and $q$,
let us assume there exists $r \in A \setminus N_A(x)$ with $pr \in E(G)$
and $qr \notin E(G)$. Let $Z = \{p,q,r,x\}$ and observe that $G[X]$
is isomorphic to $P_4$, with $x$ being one of the internal vertices. Consequently,
the assumptions of Claim~\ref{lem:h:grow} are satisfied (with the roles
of $A$ and $B$ swapped) for $Q = N_B(x)$ and $D = B \setminus N_B(x)$, and we have $\muA(\reach{Q}{D}) \leq 4\degeps + \bheps$.
However, the connectivity of $B$ implies that $\reach{Q}{D} = D$,
a contradiction to~\eqref{eq:grow:cover}.
\end{proof}

\subsection{Neighbourhoods along a nonedge in $S$}

We start by proving Property~(NE2).
\begin{claim}[\ABsym]\label{lem:h:NE2}
For every $x,y \in S$ with $x \neq y$ and $xy \notin E(G)$, if $N_B(x) \neq N_B(y)$, then 
there is no edge between $N_A(x) \cap N_A(y)$ and $A \setminus N_A(x,y)$.
\end{claim}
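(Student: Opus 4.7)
Assume toward contradiction that (NE2) fails, and extract the violating data: vertices $x,y \in S$ with $x \neq y$ and $xy \notin E(G)$, $N_B(x) \neq N_B(y)$, together with $p \in N_A(x) \cap N_A(y)$ and $q \in A \setminus N_A(x,y)$ such that $pq \in E(G)$. By the symmetry between $x$ and $y$, I may assume $s \in N_B(x) \setminus N_B(y)$. The edges $sx, xp, py, pq$ and non-edges $sp, sy, sq, xq, xy, yq$ immediately show that $G[\{s,x,p,y,q\}]$ is an induced 1-hook with active vertex $q$: take $v_1 = q$, $v_2 = p$, $v_3 = x$, $v_4 = s$, $v_5 = y$.

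Mirroring the proof of Claim~\ref{lem:h:Anei}, the main step is to apply Claim~\ref{lem:h:grow} with
\[
  Z = \{s,x,p,y\}, \qquad
  Q = N_A(p) \setminus (N_A(x) \cup N_A(y) \cup \{p\}), \qquad
  D = A \setminus (\{p\} \cup N_A(p) \cup N_A(x) \cup N_A(y)).
\]
These three sets are pairwise disjoint, $Q, D \subseteq A$, and $(Z, D)$ is anti-adjacent ($s \in B$ has no $A$-neighbors and $D$ avoids each of $N_A(x)$, $N_A(y)$, $N_A[p]$). Crucially, the $q$-adjacency pattern extends uniformly to every $q' \in Q$: we have $q'p \in E$ and $q'$ is non-adjacent to each of $x, y, s$ (the last automatically), so $G[\{q', s, x, p, y\}]$ induces a 1-hook with $q'$ active, verifying assumption~(iii). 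Assumption~(iv) holds for sufficiently small constants by~\eqref{eq:grow:cover}, and Claim~\ref{lem:h:grow} then yields $\muA(\reach{Q}{D}) \le 4\degeps + \bheps$.

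The contradiction is obtained by lower-bounding $\muA(\reach{Q}{D})$. Using the filtering bound~\eqref{eq:h:filter} and the fact that $p \in N_A(x)$ means $\{p\} \subseteq N_A(x)$ (so $\{p\}$ adds no new $\muA$-mass), the set $Q \cup D = A \setminus (\{p\} \cup N_A(x) \cup N_A(y))$ has $\muA$-measure at least $1 - 2\degeps$. If $G[Q \cup D]$ were connected, the connectivity of $G[A]$ would give $\reach{Q}{D} = D$ and hence $\muA(\reach{Q}{D}) \ge 1 - O(\degeps)$, contradicting the bound above for small enough constants. The main obstacle, and the hardest part of the proof, is that removing $\{p\} \cup N_A(x) \cup N_A(y)$ may disconnect $G[A]$, and an anti-adjacent pair produced inside $A$ does not directly translate into one inside $S$. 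I plan to resolve this by applying Lemma~\ref{lem:large-comp} to $G[Q \cup D]$ with measure $\muA$: either the component of $G[Q \cup D]$ containing $q$ already has $\muA$-measure exceeding $4\degeps + \bheps$ (giving the contradiction directly), or the resulting $A$-side disconnection is transferred to the $S$-side via the $\favA$-preimage structure together with the bounded $S$-neighborhoods furnished by Property~4 of an $\degeps$-structured pair, producing an anti-adjacent pair of size at least $\bheps|S|$ inside $S$ and contradicting the standing assumption.
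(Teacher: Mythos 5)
Your reduction of the violating configuration to a $1$-hook is correct and your first application of Claim~\ref{lem:h:grow} is legitimate: with $Z=\{s,x,p,y\}$, $Q=N_A(p)\setminus(N_A(x)\cup N_A(y)\cup\{p\})$ and $D=A\setminus(\{p\}\cup N_A(p)\cup N_A(x)\cup N_A(y))$ all hypotheses check out, and you correctly obtain $\mu_A(\reach{Q}{D})\le 4\degeps+\bheps$. The problem is that this single application does not come close to covering $A$, and your proposed way of handling the leftover is not valid. Concretely, let $D_1=D\setminus\reach{Q}{D}$; then $\mu_A(D_1)\ge 1-2\degeps-\mu_A(Q)-\mu_A(\reach{Q}{D})\ge 1-7\degeps-\bheps$, and $N_A(D_1)\subseteq N_A(x)\cup N_A(y)$, so there is no contradiction with the connectivity of $G[A]$: the picture where almost all of $A$ hangs off $N_A(x)\cup N_A(y)$ without touching $N_A(p)$ is perfectly consistent with everything you have proved. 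Your first fallback branch (``the component of $G[Q\cup D]$ containing $q$ has measure exceeding $4\degeps+\bheps$'') can never fire, since that component is contained in $Q\cup\reach{Q}{D}$ and hence has measure at most $5\degeps+\bheps$. Your second fallback --- transferring the $A$-side disconnection to an anti-adjacent pair in $S$ via $\pi_A$-preimages --- is the genuine gap: a union of components of $G[Q\cup D]$ of large $\mu_A$-measure only tells you that many vertices of $S$ have their favourite $A$-neighbour inside $D_1$; it says nothing about adjacencies among those vertices of $S$, and the theorem's first outcome requires the anti-adjacent pair to live inside $S$. Nowhere in the paper's machinery is an anti-adjacent pair in $A$ converted into one in $S$; contradictions are always obtained by covering all of $A$ with sets of small $\mu_A$-measure.

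For comparison, the paper closes the argument with a \emph{second} application of Claim~\ref{lem:h:grow}. It first takes $Q=N_A(x)\cap N_A(y)$ and $D=A\setminus N_A(x,y)$ (using the $P_4$ on $\{z,x,q,y\}$ for $z\in N_B(x)\triangle N_B(y)$) to bound $F=\reach{Q}{D}$; the point of this choice is that the leftover $D'=D\setminus F$ then satisfies $N_A(D')\subseteq N_A(x)\triangle N_A(y)$. For vertices $q'$ in the symmetric difference one can build a $0$- or $1$-hook with $q'$ active using the violating edge $pq$ together with $x,y$ and vertices of $B$ (and Claim~\ref{lem:h:Anei} to force the edge $qq'$), so a second invocation bounds $\mu_A(\reach{N_A(D')}{D'})=\mu_A(D')$. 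Now $N_A(x,y)$, $F$ and $D'$ cover $A$, and the total measure is less than $1$ by~\eqref{eq:grow:cover}, contradicting connectivity of $G[A]$. To repair your proof you would need an analogous second stage handling all of $N_A(x)\cup N_A(y)$ adjacent to $D_1$ (not just $N_A(p)$), which is essentially the paper's argument; the ``transfer to $S$'' step should be abandoned.
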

\begin{proof}
Let  $z \in N_B(x) \triangle N_B(y)$ be any vertex.
Observe that the assumptions of Claim~\ref{lem:h:grow} are satisfied
for $Z = \{x,y,z\}$, $Q = N_A(x) \cap N_A(y)$ and $D = A \setminus N_A(x,y)$:
for every $q \in Q$ the graph $G[\{z,x,q,y\}]$ is a $P_4$ with $q$ being one of its internal vertices.
Hence, $\muA(\reach{Q}{D}) \leq 3\degeps + \bheps$.
Let us denote $F = \reach{Q}{D}$; our goal is to prove that $F = \emptyset$.

Assume the contrary, let $p \in F$ and $q \in Q$ with $pq \in E(G)$.
Let $D' = D \setminus F$ and $Q' = N_A(D')$; note that, by the definition of
$F$, we have $Q' \subseteq N_A(x) \triangle N_A(y)$.
Furthermore, Claim~\ref{lem:h:Anei} implies that $qq' \in E(G)$ for every
$q' \in Q'$: $q'$ has a neighbour in $D'$, while $q$ does not have such a neighbour,
and both $q$ and $q'$ belong either to $N_A(x)$ or to $N_A(y)$.

Let $z_x$ be any vertex in $N_B(x)$, $z_y$ be any vertex in $N_B(y)$,
and $Z' = \{p,q,x,y,z_x,z_y\}$. We claim that the assumptions of Claim~\ref{lem:h:grow} are satisfied for $Z'$, $Q'$, and $D'$: clearly $Z'$ and $D'$ are fully anti-adjacent by construction, so it remains only to check assumption~\eqref{p:grow:3}.

To this end, consider $q' \in Q'$. By symmetry between $x$ and $y$, assume $q' \in N_A(x) \setminus N_A(y)$. If $pq' \in E(G)$, then $G[\{p,q',x,z_x\}]$ is a $0$-hook
with $q'$ being the active vertex. If $pq' \notin E(G)$, then
$G[\{q,q',p,y,z_y\}]$ is a $1$-hook with $q'$ being the active vertex. 

By Claim~\ref{lem:h:grow}, we infer that $\muA(\reach{Q'}{D'}) \leq 5\degeps + \acteps$. However, by connectivity of $B$ we have $\reach{Q'}{D'} = D'$.
This, together with $\muA(N_A(x,y)) \leq 2\degeps$ by~\eqref{eq:h:filter}
and $\muA(F) \leq 3\degeps + \bheps$ contradicts~\eqref{eq:grow:cover}.
\end{proof}

Since the structured pair $(G,(A,B,S))$ satisfies Properties~(NE1)
and~(NE2), we can use Lemma~\ref{lem:nice:inc} in the following, where we prove  
Property~(NE3). 
\begin{claim}[\ABsym]
For every $x,y \in S$ with $x \neq y$ and $xy \notin E(G)$,
if $N_A(x) \subsetneq N_A(y)$, then the sets $N_A(x)$ and $N_A(y) \setminus N_A(x)$ are fully adjacent.
\end{claim}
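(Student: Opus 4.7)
The plan is to argue by contradiction: suppose there exist $p \in N_A(x)$ and $q \in N_A(y) \setminus N_A(x)$ with $pq \notin E(G)$. The first structural step will be to pin down the $A$-neighbourhoods of $p$ and $q$. Since $N_A(x) \subsetneq N_A(y)$, Lemma~\ref{lem:nice:inc} (whose hypotheses hold after the filtering step and by Claim~\ref{lem:h:NE2}) gives $N_B(x) \neq N_B(y)$, so Claim~\ref{lem:h:NE2} applied to the pair $(x, y)$ forbids any edge between $N_A(x) \cap N_A(y) = N_A(x)$ and $A \setminus N_A(x, y) = A \setminus N_A(y)$. In particular $N_A(p) \subseteq N_A(y)$. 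Claim~\ref{lem:h:Anei} applied to the non-adjacent pair $p, q \in N_A(y)$ then forces $N_A(q) \setminus N_A(y) = N_A(p) \setminus N_A(y) = \emptyset$, so $N_A(q) \subseteq N_A(y)$ as well.

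Next I would set $T := A \setminus N_A(y)$. By~\eqref{eq:h:filter} we have $\muA(T) > 1 - \degeps$, so $T$ is nonempty; the connectivity of $G[A]$ then ensures that the ``frontier'' $Q := N_A(T) \cap N_A(y)$ is nonempty as well. The key observation at this stage is that every $w \in Q$ satisfies $wx \notin E(G)$ and $pw, qw \in E(G)$. The first is immediate from Claim~\ref{lem:h:NE2}: if $w$ belonged to $N_A(x) = N_A(x) \cap N_A(y)$, then $w$ could not have a neighbour in $T \subseteq A \setminus N_A(x, y)$, contradicting $w \in N_A(T)$. For $pw \in E(G)$: were $pw$ a non-edge, Claim~\ref{lem:h:Anei} applied to $(p, w)$ would give $N_A(w) \setminus N_A(y) = N_A(p) \setminus N_A(y) = \emptyset$, again contradicting $w \in N_A(T)$; the argument for $qw$ is identical. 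Consequently $G[\{w, x, p, q\}]$ has exactly the edges $xp$, $pw$, and $wq$, forming the induced path $x - p - w - q$, i.e.\ a $0$-hook with $w$ as its interior (active) vertex.

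The final step is to invoke Claim~\ref{lem:h:grow} with $Z := \{x, p, y, q\}$, with $Q$ as above, and $D := T$. All disjointness and containment assumptions are immediate; $(Z, D)$ is anti-adjacent because $N_A(Z) = N_A(p) \cup N_A(q) \cup N_A(x) \cup N_A(y) \subseteq N_A(y)$ and $Z \cap A = \{p, q\} \subseteq N_A(y)$, both disjoint from $T$. Assumption~(iii) is the $0$-hook just constructed, and~(iv) follows from~\eqref{eq:grow:cover} since $|Z| = 4$. The claim then yields $\muA(\reach{Q}{D}) \leq 4 \degeps + \bheps$. A connectivity argument essentially identical to the one at the end of Claim~\ref{lem:h:NE2} shows that $\reach{Q}{D} = T$, giving $\muA(T) \leq 4\degeps + \bheps$, which contradicts $\muA(T) > 1 - \degeps$ for suitably small constants. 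The hard part is the structural step of showing $N_A(p), N_A(q) \subseteq N_A(y)$ and identifying the $P_4$ through each frontier vertex; once these are in hand, the covering bound falls out mechanically.
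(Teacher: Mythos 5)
Your proposal is correct and follows essentially the same route as the paper: use Lemma~\ref{lem:nice:inc} and Claim~\ref{lem:h:NE2} to make $N_A(x)$ anti-adjacent to $A \setminus N_A(y)$, use Claim~\ref{lem:h:Anei} to show every frontier vertex of $A\setminus N_A(y)$ is adjacent to both $p$ and the assumed non-neighbour, build the induced $P_4$ with the frontier vertex internal, and contradict the connectivity of $G[A]$ via Claim~\ref{lem:h:grow}. The only differences are cosmetic (you include $y$ in $Z$, which is harmless and merely changes the bound to $4\degeps+\bheps$).
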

\begin{proof}
Since $N_A(x) \subsetneq N_A(y)$, Lemma~\ref{lem:nice:inc} implies 
that $N_B(x) \neq N_B(y)$. Consequently, Claim~\ref{lem:h:NE2}
asserts that $D := A \setminus N_A(y)$ and $N_A(x)$ are fully anti-adjacent.
That is, if we define $Q = N_A(D)$, then $Q \subseteq N_A(y) \setminus N_A(x)$.

By contradiction, assume there exists $z \in N_A(y) \setminus N_A(x)$
and $p \in N_A(x)$ with $pz \notin E(G)$. Claim~\ref{lem:h:Anei}
implies that $z \notin Q$, as $p \notin Q$ and $p,z \in N_A(y)$.
Furthermore, Claim~\ref{lem:h:Anei} also implies that $z$ is fully adjacent to $Q$.
We also know that $p$ is fully adjacent to $Q$.
We infer that the conditions of Claim~\ref{lem:h:grow} are satisfied for
$Z = \{z,p,x\}$ and the sets $Q$ and $D$: for every $q \in Q$, the graph
$G[\{z,q,p,x\}]$ is a $P_4$ with $q$ being one of its internal vertices.
Consequently, $\muA(\reach{Q}{D}) \leq 3\degeps+\bheps$, which stands
in contradiction with the connectivity of $G[A]$ and~\eqref{eq:grow:cover}.
\end{proof}

\subsection{Neighbourhoods along an edge in $S$}

In the next three claims we prove Property~(E1).

\begin{claim}[\ABsym]\label{lem:h:e1}
For every $x,y \in G[S]$, if there is no edge between $N_A(x) \triangle N_A(y)$ and $A \setminus N_A(x,y)$,
then $N_A(x) = N_A(y)$.
\end{claim}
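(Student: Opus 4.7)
The plan is a proof by contradiction. Suppose $x,y\in S$ satisfy the hypothesis that $W:=N_A(x)\triangle N_A(y)$ has no edge to $D:=A\setminus N_A(x,y)$, yet $N_A(x)\neq N_A(y)$. By symmetry fix $p\in N_A(x)\setminus N_A(y)$, and write $U:=N_A(x)\cap N_A(y)$. The filtering inequality~\eqref{eq:h:filter} gives $\muA(D)>1-2\degeps$, so $D$ is nonempty; the connectivity of $G[A]$ combined with the hypothesis then forces the existence of $q\in U$ and $r\in D$ with $qr\in E(G)$.

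When $xy\notin E(G)$ the argument is short. In the sub-case $N_B(x)\neq N_B(y)$, Claim~\ref{lem:h:NE2} additionally rules out edges between $U$ and $D$, so no vertex of $N_A(x,y)$ has a neighbour in $D$, contradicting the connectivity of $G[A]$ since both $N_A(x,y)$ (containing $p$) and $D$ are nonempty. In the sub-case $N_B(x)=N_B(y)$, Lemma~\ref{lem:nice:inc}---which applies because property (NE1) follows from the filtering step and (NE2) was just established as Claim~\ref{lem:h:NE2}---immediately gives $N_A(x)=N_A(y)$, again a contradiction.

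The main case, $xy\in E(G)$, is where the work lies. The first step is to apply Claim~\ref{lem:h:Anei} to $x\in S$ with the pair $p,q\in N_A(x)$: if $pq\notin E(G)$, then the equality $N_A(p)\setminus N_A(x)=N_A(q)\setminus N_A(x)$ places $r$ inside $N_A(p)$, contradicting the hypothesis. So $pq\in E(G)$, and the same reasoning, applied to every such $q\in U\cap N_A(D)$, shows that $p$---and in fact every vertex of $N_A(x)\setminus N_A(y)$---is adjacent to all of $U\cap N_A(D)$. The plan for finishing is to pick $z\in N_B(y)$ and verify that for every $p'\in N_A(x)\setminus N_A(y)$ the set $\{p',q,r,y,z\}$ induces an $\hook{1}$ with path $p'{-}q{-}y{-}z$ and pendant edge $q{-}r$, in which $p'$ is the active vertex (all the required non-edges follow from the $A$-$B$ partition, the hypothesis, and $p'\notin N_A(y)$). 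One then applies Claim~\ref{lem:h:grow} with $Z=\{q,r,y,z\}$ to conclude. The subtle point---and the main obstacle---is that every vertex of $N_A(x)\setminus N_A(y)$ lies in $W$ and therefore has no reach into $D$, so a naive invocation of Claim~\ref{lem:h:grow} with $Q\subseteq W$ yields only a trivially satisfied bound; a two-stage argument in the spirit of the proof of Claim~\ref{lem:h:NE2}, where the second stage enlarges $Z$ using the specific pair $(q,r)$ and relies on the strong adjacency between $W$ and $U\cap N_A(D)$ proved above, is needed to cover the remainder of $D$ and extract the desired contradiction with the connectivity of $G[A]$.
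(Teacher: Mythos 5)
Your preparatory steps are sound: the unnecessary-but-correct dispatch of the case $xy \notin E(G)$ via Claim~\ref{lem:h:NE2} and Lemma~\ref{lem:nice:inc} (the paper's argument never distinguishes whether $xy$ is an edge), and the correct deduction from Claim~\ref{lem:h:Anei} that every $p' \in N_A(x)\setminus N_A(y)$ is adjacent to all of $N_A(D)$, which by hypothesis is contained in $U = N_A(x)\cap N_A(y)$. But the proof is not complete. In the main case you build hooks whose \emph{active vertex} lies in $W = N_A(x)\triangle N_A(y)$, and, as you yourself observe, every vertex of $W$ has empty reach into $D$ by hypothesis, so Claim~\ref{lem:h:grow} returns a vacuous bound. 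The ``two-stage argument'' you invoke to repair this is never carried out, and as sketched it cannot work: its first stage is vacuous for exactly the same reason, so the entire burden falls on an unspecified second stage in which the candidate active vertices would have to come from $U$ after all. This is a genuine gap, not a presentational one.

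The fix is to swap the roles of $p$ and $q$ in the very configuration you constructed. Take $Q := N_A(D) \subseteq U$ and $Z := \{p,y,z\}$ with $z \in N_B(y)$; you have already shown $pq \in E(G)$ for every $q \in Q$, so $G[\{p,q,y,z\}]$ is an induced $P_4$ $p$--$q$--$y$--$z$ with $q$ an \emph{internal} vertex, i.e.\ a $0$-hook with active vertex $q$. The pair $(Z,D)$ is anti-adjacent ($p$ by hypothesis, $y$ by the definition of $D$, and $z \in B$), so a single application of Claim~\ref{lem:h:grow} yields $\mu_A(\reach{Q}{D}) \leq 3\degeps + \bheps$. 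Since $\reach{Q}{D} = D$ by the connectivity of $G[A]$ while $\mu_A(D) > 1 - 2\degeps$ by~\eqref{eq:h:filter}, this contradicts~\eqref{eq:grow:cover}. This is precisely the paper's proof; the vertex $r$ and the $1$-hook with pendant edge $qr$ are never needed.
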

\begin{proof}
By contradiction, assume there exists $p \in N_A(x) \triangle N_A(y)$;
by symmetry, assume $p \in N_A(x) \setminus N_A(y)$.
Let $D = A \setminus N_A(x,y)$ and $Q = N_A(D) \subseteq N_A(x) \cap N_A(y)$.
Let $z$ be any vertex in $N_B(y)$, and let $Z = \{p,y,z\}$.
Observe that Claim~\ref{lem:h:Anei} implies that $p$ is fully adjacent to $Q$,
as they are both contained in $N_A(x)$ and $p$ does not have any neighbour in
$D$.
Consequently, the assumptions of Claim~\ref{lem:h:grow} are satisfied
for the sets $Z$, $Q$, and $D$: for every $q \in Q$, the graph
$G[\{p,q,y,z\}]$ is a $P_4$ with $q$ being one of the middle vertices.
Hence, $\muA(\reach{Q}{D}) \leq 3\degeps+\bheps$.
However, $\reach{Q}{D} = D$ by the connectivity of $A$, and we have
a contradiction with~\eqref{eq:grow:cover}.  
\end{proof}

\begin{claim}[\ABsym]\label{lem:h:e2}
For every $xy \in E(G[S])$, if $N_A(x) \setminus N_A(y) \neq \emptyset$ but
$\muA(\reach{N_A(x) \setminus N_A(y)}{A \setminus N_A(x,y)}) \leq 6\degeps + \bheps$,
then $N_B(x) = N_B(y)$.
\end{claim}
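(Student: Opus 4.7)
The approach is to prove the contrapositive: assume $N_B(x) \neq N_B(y)$ and derive a contradiction by covering $A$ with sets whose total $\muA$-measure is strictly less than $1$. Let $D := A \setminus N_A(x,y)$, $F := \reach{N_A(x) \setminus N_A(y)}{D}$, $D' := D \setminus F$, and $Q' := N_A(D')$. By the definition of $F$, no vertex of $N_A(x) \setminus N_A(y)$ is adjacent to $D'$; similarly no vertex of $F$ is adjacent to any $v \in D'$, since such an edge would force $v \in F$. Hence $Q' \subseteq A \setminus D' \setminus F \setminus (N_A(x) \setminus N_A(y)) = N_A(y)$. Using the decomposition $A = N_A(x,y) \uplus F \uplus D'$, the filter bound $\muA(N_A(x,y)) \leq 2\degeps$ from~\eqref{eq:h:filter}, and the hypothesis $\muA(F) \leq 6\degeps + \bheps$, it will suffice to prove $\muA(D') \leq O(\degeps + \bheps)$ via Claim~\ref{lem:h:grow}.

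Fix $p \in N_A(x) \setminus N_A(y)$, $z \in N_B(x) \triangle N_B(y)$, and $z_y \in N_B(y)$, all of which exist by hypothesis and the definition of a structured pair. Set $Z := \{p, x, y, z, z_y\}$, so $|Z| \leq 5$. The pair $(Z, D')$ is anti-adjacent: $x, y$ have no neighbours in $D' \subseteq A \setminus N_A(x,y)$; $z, z_y \in B$ have no neighbours in $A$; and $p$ has no neighbour in $D'$, for such a neighbour would lie in $F$.

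The core step is to verify, for every $q \in Q'$, that $G[\{q\} \cup Z]$ contains an active hook with active vertex $q$. Write $Q' = Q'_1 \uplus Q'_2$ with $Q'_1 := Q' \cap N_A(x)$ and $Q'_2 := Q' \setminus N_A(x)$. For $q \in Q'_1$, Claim~\ref{lem:h:Anei} applied to $x$ forces $qp \in E(G)$: otherwise the common outside-$N_A(x)$ neighbourhood of $q$ and $p$ would contain the $D'$-neighbour of $q$, placing $p$ into $N(D')$, which we ruled out. Setting $z^* := z$ if $z \in N_B(y) \setminus N_B(x)$ and $z^* := z_y$ otherwise, the induced $P_4$ on $\{z^*, y, q, p\}$ is then a $0$-hook with $q$ as active internal vertex. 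The same $0$-hook works for $q \in Q'_2$ whenever $qp \in E(G)$.

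The remaining case is $q \in Q'_2$ with $qp \notin E(G)$, in which $G[\{p,q,x,y\}]$ is already the induced $P_4$ $p$-$x$-$y$-$q$. When $z \in N_B(y) \setminus N_B(x)$, attaching the pendant $y$-$z$ yields a $1$-hook at $q$ on the vertex set $\{q,y,x,p,z\}$. When $N_B(y) \subseteq N_B(x)$ forces $z \in N_B(x) \setminus N_B(y)$, I would augment $Z$ with a sixth vertex $v' \in F \cap N_A(p)$, provided one exists; any $D'$-neighbour of $v'$ would itself lie in $F$, so anti-adjacency with $D'$ is preserved. Then the $2$-hook on vertices $q$-$y$-$x$-$p$-$v'$ with pendant $x$-$z$ handles the sub-case $qv' \notin E(G)$, and the $0$-hook $p$-$v'$-$q$-$y$ handles the sub-case $qv' \in E(G)$. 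The principal technical obstacle I foresee is the degenerate sub-case $F \cap N_A(p) = \emptyset$ (in particular, when $F = \emptyset$); here the contrapositive of Claim~\ref{lem:h:e1} forces some vertex of $N_A(y) \setminus N_A(x)$ to have a neighbour in $D$, and an ad hoc hook construction exploiting this structural witness seems necessary to close out the argument.
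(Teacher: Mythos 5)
Your setup coincides with the paper's: the same sets $F$, $D' = A \setminus (N_A(x,y)\cup F)$ and $Q' = N_A(D') \subseteq N_A(y)$, the same covering argument $A = N_A(x,y)\uplus F\uplus D'$ combined with~\eqref{eq:h:filter}, \eqref{eq:grow:cover} and Claim~\ref{lem:h:grow}, and the same hooks in the first two cases (the induced $P_4$ through $q$ when $pq\in E(G)$, and the $1$-hook on $q,y,x,p$ with pendant edge $yz_1$ when $pq\notin E(G)$ and some $z_1\in N_B(y)\setminus N_B(x)$ exists). The problem is the remaining case $N_B(y)\subseteq N_B(x)$ with $N_B(x)\neq N_B(y)$, which you do not close: your construction needs a vertex $v'\in F\cap N_A(p)$, and the degenerate situation $F=\emptyset$ is not excluded by the hypotheses --- Claim~\ref{lem:h:e1} on the $A$-side only guarantees an edge from $N_A(x)\triangle N_A(y)$ to $A\setminus N_A(x,y)$, and that edge may well leave from $N_A(y)\setminus N_A(x)$. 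You acknowledge this sub-case and leave it open, so as written the argument has a genuine gap. (A smaller issue: you fix $p$ arbitrarily at the outset, whereas your $v'$-device would additionally require choosing $p$ to be a vertex of $N_A(x)\setminus N_A(y)$ that actually has a neighbour in $D$, which again is only possible when $F\neq\emptyset$.)

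The paper closes exactly this case by growing the hook on the $B$-side rather than into $F$: since $N_B(x)\neq N_B(y)$ and $N_B(y)\subseteq N_B(x)$, Claim~\ref{lem:h:e1} applied with the roles of $A$ and $B$ swapped yields an edge $z_2z_3$ with $z_2\in N_B(x)\setminus N_B(y)$ and $z_3\in B\setminus N_B(x,y)$. Then for every $q\in Q'$ with $q\in N_A(y)\setminus N_A(x)$ and $pq\notin E(G)$, the graph $G[\{x,y,q,p,z_2,z_3\}]$ is a $2$-hook (the path on $q,y,x,z_2,z_3$ in this order, with pendant edge $xp$) having $q$ as its active vertex; so $Z=\{x,y,z,p,z_2,z_3\}$ works uniformly with $|Z|=6$, which is still within the reach of~\eqref{eq:grow:cover}. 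This is the missing ingredient needed to complete your proof.
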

\begin{proof}
Let $F = \reach{N_A(x) \setminus N_A(y)}{A \setminus N_A(x,y)}$,
$D = A \setminus (N_A(x,y) \cup F)$, and $Q = N_A(D) \subseteq N_A(y)$.
Let $p$ be any vertex in $N_A(x) \setminus N_A(y)$
and let $z$ be any vertex in $N_B(y)$.

If $N_B(y) \not\subseteq N_B(x)$, then let $z_1$ be any vertex
of $N_B(y) \setminus N_B(x)$ and define $Z = \{x,y,z,p,z_1\}$.
Otherwise, unless $N_B(x) = N_B(y)$, Claim~\ref{lem:h:e1} implies
that there exists an edge $z_2z_3$ with $z_2 \in N_B(x) \setminus N_B(y)$
and $B \setminus N_B(x,y)$, and we take $Z = \{x,y,z,p,z_2,z_3\}$.

We claim that in both cases the sets $Z$, $Q$, and $D$ satisfy
the assumptions of Claim~\ref{lem:h:grow}. Clearly, $D$ and $Z$ are fully
anti-adjacent, so it remains to check only assumption~\eqref{p:grow:3}.
To this end, consider $q \in Q$. If $pq \in E(G)$, then
$G[\{p,q,y,z\}]$ is a $P_4$ with $q$ being one of the middle vertices.
Otherwise, Claim~\ref{lem:h:Anei} implies that $q \notin N_A(x)$, that is,
$q \in N_A(y) \setminus N_A(x)$. 
If the vertex $z_1$ exists, then $G[\{y,q,x,p,z_1\}]$ is a $1$-hook
with $q$ being the active vertex.
Finally, if the edge $z_2z_3$ exists, then $G[\{x,y,q,p,z_2,z_3]$ is a $2$-hook
with $q$ being the active vertex.

We infer that $\muA(\reach{Q}{D}) \leq 6\degeps + \bheps$. However,
the connectivity of $G[A]$ implies that $D = \reach{Q}{D}$. This is in contradiction
with~\eqref{eq:grow:cover} and the assumption $\muA(F) \leq 6\degeps + \bheps$.
\end{proof}

\begin{claim}\label{lem:h:e3}
For every $xy \in E(G[S])$, either $N_A(x) \setminus N_A(y) = \emptyset$ or $N_B(x) \setminus N_B(y) = \emptyset$.
\end{claim}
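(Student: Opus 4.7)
The plan is to argue by contradiction, assuming both $N_A(x)\setminus N_A(y)\neq\emptyset$ and $N_B(x)\setminus N_B(y)\neq\emptyset$; in particular we may fix $p\in N_A(x)\setminus N_A(y)$ and $r\in N_B(x)\setminus N_B(y)$, and deduce $N_A(x)\neq N_A(y)$ and $N_B(x)\neq N_B(y)$. Applying the contrapositive of Claim~\ref{lem:h:e2} to the edge $xy$ yields
$$\muA(F_A) > 6\degeps + \bheps, \qquad F_A := \reach{N_A(x)\setminus N_A(y)}{A\setminus N_A(x,y)},$$
and the (\ABsym) version gives $\muB(F_B) > 6\degeps + \bheps > 0$ with the symmetric definition of $F_B$.

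Since $F_B$ is nonempty, by definition of $\reach{\cdot}{\cdot}$ there exist $r_0\in N_B(x)\setminus N_B(y)$ and $r'\in B\setminus N_B(x,y)$ with $r_0 r'\in E(G)$. For notational convenience, rename $r:=r_0$. The heart of the proof is then to apply Claim~\ref{lem:h:grow} with
$$Z = \{x,y,r,r'\},\quad Q = N_A(x)\setminus N_A(y),\quad D = A\setminus N_A(x,y).$$
Disjointness and anti-adjacency are immediate: $Q\cup D\subseteq A$ while $x,y\in S$ (and have no neighbours in $A\setminus N_A(x,y)$) and $r,r'\in B$ (and have no neighbours in $A$ at all). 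Assumption~\eqref{eq:grow:cover} certainly implies $(|Z|+k)\degeps+(k+3)\bheps+\acteps<1$ since $|Z|=4\leq 6$.

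The only nontrivial point is verifying hypothesis~\eqref{p:grow:3}: for every $q\in Q$, the induced subgraph $G[\{q,x,y,r,r'\}]$ is a $1$-hook with $q$ as active vertex. The present edges are precisely $qx$ (from $q\in N_A(x)$), $xy$ (given), $xr$ (from $r\in N_B(x)$), and $rr'$; all other pairs are non-edges: $qy$ by $q\notin N_A(y)$, $xr'$ and $yr$, $yr'$ by $r,r'\in B\setminus N_B(y)$ respectively $r'\notin N_B(x)$, and $qr,qr'$ because there are no $A$--$B$ edges. Thus the graph is exactly the path $q-x-r-r'$ with pendant edge $xy$ at $x$, i.e.\ a $1$-hook with active vertex $q$ (playing the role of $v_1$).

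Claim~\ref{lem:h:grow} then gives $\muA(F_A) = \muA(\reach{Q}{D}) \leq 4\degeps+\bheps$, contradicting $\muA(F_A)>6\degeps+\bheps$. The main creative step is choosing $Z$: the natural attempt $Z=\{x,y,r\}$ only yields a claw at $x$, not a hook, so one must extend it through $B$, and this is exactly what the vertex $r'$ delivered by the symmetric application of Claim~\ref{lem:h:e2} allows us to do. Everything else is bookkeeping.
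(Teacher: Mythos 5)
Your proof is correct and is essentially the paper's own argument: both obtain the edge from $N_B(x)\setminus N_B(y)$ to $B\setminus N_B(x,y)$ via the $B$-side of Claim~\ref{lem:h:e2}, then apply Claim~\ref{lem:h:grow} with $Z=\{x,y,z_1,z_2\}$, $Q=N_A(x)\setminus N_A(y)$, $D=A\setminus N_A(x,y)$ and the same $1$-hook $q\hbox{--}x\hbox{--}z_1\hbox{--}z_2$ with pendant $xy$. The only difference is presentational (you invoke the contrapositive of the $A$-side of Claim~\ref{lem:h:e2} up front rather than at the end to close the contradiction).
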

\begin{proof}
Assume the contrary. 
Since $N_A(x) \neq N_A(y)$, Claim~\ref{lem:h:e2} applied to the side $B$
instead of the side $A$ asserts that
$\muB(\reach{N_B(x) \setminus N_B(y)}{B \setminus N_B(x,y)}) > 6\degeps + \bheps$;
in particular, there exists an edge $z_1z_2 \in E(G)$ with $z_1 \in N_B(x) \setminus N_B(y)$ and $z_2 \in B \setminus N_B(x,y)$.

Define now $Z = \{x,y,z_1,z_2\}$, $Q = N_A(x) \setminus N_A(y)$, and
$D = A \setminus N_A(x,y)$. Observe that the assumptions of Claim~\ref{lem:h:grow}
are satisfied for these sets: for every $q \in Q$ the graph $G[\{x,q,y,z_1,z_2\}]$
is a $1$-hook with $q$ being its active vertex. 
Consequently, $\muA(\reach{Q}{D}) \leq 4\degeps + \bheps$, a contradiction
to Claim~\ref{lem:h:e2} and the assumption $N_B(x) \neq N_B(y)$.
\end{proof}

\subsection{Niceness and quotient graph}

Summing up, we have so far proven the following.
\begin{corollary}
The $\degeps$-structure $(G,(A,B,S))$ is nice.
\end{corollary}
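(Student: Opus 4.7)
The plan is simply to verify each of the four clauses of Definition~\ref{def:nice} by invoking the claims already proved earlier in this section, since the real work of the proof was done piece by piece. First I would note that Property (NE1), stating $A \not\subseteq N(x)$ and $B \not\subseteq N(x)$ for all $x \in S$, follows immediately from the filtering inequality~\eqref{eq:h:filter}: in particular $\muA(N_A(x)) < \degeps < 1 = \muA(A)$, so $N_A(x) \subsetneq A$, and symmetrically $N_B(x) \subsetneq B$.

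Next, Property (NE2) is precisely the conclusion of Claim~\ref{lem:h:NE2}, which has been stated with the (\ABsym) symmetry flag, hence it also covers the swap between the roles of $A$ and $B$. Property (NE3) is exactly the claim proved immediately after Claim~\ref{lem:h:NE2}, again stated with (\ABsym), which asserts that along a nonedge $xy$ in $S$ with $N_A(x) \subsetneq N_A(y)$ the sets $N_A(x)$ and $N_A(y) \setminus N_A(x)$ are fully adjacent. Finally, Property (E1) is exactly the statement of Claim~\ref{lem:h:e3}: for every edge $xy \in E(G[S])$, either $N_A(x) \setminus N_A(y) = \emptyset$ or $N_B(x) \setminus N_B(y) = \emptyset$.

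Since all four required properties have been established under the standing assumption that $(G,(A,B,S))$ admits no desired anti-adjacent pair and no desired active hook, $(G,(A,B,S))$ is nice by definition, completing the corollary. There is no genuine obstacle at this stage, as the technical work has been packaged into the preceding claims; the only thing to double-check is that each claim's conclusion matches the corresponding clause of Definition~\ref{def:nice} verbatim (including the (\ABsym) symmetry), which it does.
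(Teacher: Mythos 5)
Your proposal is correct and matches the paper's intent exactly: the paper introduces this corollary with ``Summing up, we have so far proven the following,'' relying on \eqref{eq:h:filter} for (NE1), Claim~\ref{lem:h:NE2} for (NE2), the subsequent (\ABsym) claim for (NE3), and Claim~\ref{lem:h:e3} for (E1), precisely as you do. Your explicit check that $\muA(N_A(x)) < \degeps < 1 = \muA(A)$ forces $N_A(x) \subsetneq A$ is a valid (and slightly more detailed) justification of (NE1).
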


Let us define relations $\Req$, $\Rinc$, $\Rsub$, $\Rsup$, $\ReqA$, and $\ReqB$
on $S$ as in Section~\ref{sec:nice}. We apply Theorem~\ref{thm:nice:nei},
obtaining a set $\hat{S} \subseteq S$ of size at least $|S|/4$;
by Corollary~\ref{cor:nice:modules}, the equivalence classes of $\Req$
restricted to $\hat{S}$ partition $\hat{S}$ into modules of $G[\hat{S}]$.
Furthermore, Lemma~\ref{lem:nice:berge} asserts that the quotient
graph of this partition is Berge.
Thus, to conclude the proof of Theorem~\ref{thm:tech-hooks}, it suffices
to show that the quotient graph of this partition is also claw-free.

\subsection{Excluding a claw in the quotient graph}

\begin{claim}
The quotient graph of the partition of $G[\hat{S}]$ into equivalence
classes of the relation $\Req$ is claw-free.
\end{claim}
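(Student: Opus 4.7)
The plan is to suppose, for contradiction, that the quotient graph contains an induced claw; let $W, X, Y, Z$ be its four equivalence classes with $W$ as the center, and fix representatives $w, x, y, z \in \hat{S}$, one in each class. Since $x, y, z$ are pairwise non-adjacent and lie in distinct $\Req$-classes, Theorem~\ref{thm:nice:nei} yields $\Rinc(x,y), \Rinc(x,z), \Rinc(y,z)$. Applying Lemma~\ref{lem:nice:P3} to each of the three induced $P_3$s $(x,w,y), (x,w,z), (y,w,z)$ constrains, from $w$'s side, the types of the edges $wx, wy, wz$: the admissible unordered pairs of types for two edges meeting at $w$ are $\{\ReqA, \ReqB\}$, $\{\Rsub, \Rsub\}$, or $\{\Rsup, \Rsup\}$. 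A brief case analysis rules out $\ReqA$ or $\ReqB$ as types of any edge at $w$ (if $wx$ were $\ReqA$, then both $wy$ and $wz$ would have to be $\ReqB$, producing the forbidden pair $\{\ReqB, \ReqB\}$; the $\ReqB$ case is symmetric). Hence all three edges at $w$ share a common type in $\{\Rsub, \Rsup\}$; by the $\ABsym$-symmetry I assume $\Rsub(w,x), \Rsub(w,y), \Rsub(w,z)$, so $\emptyset \neq N_A(w) \subsetneq N_A(x) \cap N_A(y) \cap N_A(z)$.

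I would then derive a contradiction from the connectivity of $G[A]$. By the filtering condition~\eqref{eq:h:filter} we have $\muA(N_A(x) \cup N_A(y) \cup N_A(z)) < 3\degeps < 1$, so the set $A \setminus (N_A(x) \cup N_A(y) \cup N_A(z))$ is non-empty. Pick a shortest path $a_0, a_1, \ldots, a_m$ in $G[A]$ from $a_0 \in N_A(w)$ to $a_m \in A \setminus (N_A(x) \cup N_A(y) \cup N_A(z))$; it is induced and $m \geq 1$.

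The cases $m \in \{1,2\}$ are excluded using Property (NE2): every vertex in $N_A(x) \cap N_A(y)$ has no $G$-edges to $A \setminus (N_A(x) \cup N_A(y))$, and similarly for $(x,z)$ and $(y,z)$. This directly forbids $a_0 a_m \in E(G)$ when $m = 1$, and for $m = 2$ it forces $a_1 \in N_A(a_0)$ to lie in at least two of $N_A(x), N_A(y), N_A(z)$, after which applying (NE2) again to the relevant pair forbids the edge $a_1 a_2$. For $m \geq 3$, the same argument gives $a_2 \in N_A(x) \cup N_A(y) \cup N_A(z)$; fix $\cdot \in \{x, y, z\}$ with $a_2 \in N_A(\cdot)$, and observe that $a_0 \in N_A(\cdot)$ too.

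The finishing step is an induction on $i \geq 2$ showing $a_i \in N_A(\cdot)$. Given $a_0, a_i \in N_A(\cdot)$ with $a_0 a_i \notin E(G)$ (shortest-path minimality), Claim~\ref{lem:h:Anei} gives $N_A(a_0) \setminus N_A(\cdot) = N_A(a_i) \setminus N_A(\cdot)$. If $a_{i+1} \notin N_A(\cdot)$ then $a_{i+1} \in N_A(a_i) \setminus N_A(\cdot) = N_A(a_0) \setminus N_A(\cdot)$, forcing $a_0 a_{i+1} \in E(G)$ and contradicting the distance $i+1 \geq 3$. So $a_{i+1} \in N_A(\cdot)$, and by induction $a_m \in N_A(\cdot) \subseteq N_A(x) \cup N_A(y) \cup N_A(z)$, contradicting the choice of $a_m$. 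The main obstacle is the opening case analysis at $w$: one must carefully rule out $\ReqA$ and $\ReqB$ using Lemma~\ref{lem:nice:P3}, after which the shortest-path/Claim~\ref{lem:h:Anei} machinery delivers the contradiction uniformly.
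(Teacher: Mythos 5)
Your proof is correct, and its first half coincides with the paper's: both arguments apply Lemma~\ref{lem:nice:P3} to the three $P_3$s of the claw and use the pairwise incomparability of the leaves (from Theorem~\ref{thm:nice:nei}) to rule out the $\ReqA/\ReqB$ cases, reducing to the situation where all three edges at the centre have the same type $\Rsub$ or $\Rsup$. The two proofs diverge in the endgame. The paper stays within its measure-theoretic framework: it normalises so that $N_A(x)\cup N_A(y)\cup N_A(z)\subseteq N_A(t)$, covers $N_A(t)$ by the three sets $Q_{uv}=N_A(t)\setminus(N_A(u)\triangle N_A(v))$, invokes Claim~\ref{lem:h:grow} for each (exhibiting a $P_4$ or a $1$-hook rooted at each $q\in Q_{uv}$) to bound $\muA(\reach{Q_{uv}}{D})$, and contradicts the connectivity of $G[A]$ via~\eqref{eq:grow:cover}. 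You take the symmetric normalisation $N_A(w)\subseteq N_A(x)\cap N_A(y)\cap N_A(z)$ (the inclusion into the intersection need not be strict, but you never use strictness), which gives a common $A$-neighbour $a_0$ of all three leaves, and then run a purely combinatorial shortest-path argument: Property~(NE2), applied to each of the three non-adjacent leaf pairs, forces every $A$-neighbour of a vertex lying in all three leaf-neighbourhoods to lie in at least two of them, and Claim~\ref{lem:h:Anei} (applicable because the shortest path is induced, so $a_0a_i\notin E(G)$ for $i\geq 2$) propagates membership in a fixed $N_A(\cdot)$ along the path, contradicting the existence, guaranteed by~\eqref{eq:h:filter}, of a vertex of $A$ outside all three neighbourhoods. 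Your route avoids a further use of the hook-growing claim and of the quantitative inequality~\eqref{eq:grow:cover} at this step, at the cost of a somewhat longer elementary induction; both arguments are sound.
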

\begin{proof}
By contradiction, assume there exists a claw $(t;x,y,z)$ in $G[\hat{S}]$
such that no pair of vertices from $\{t,x,y,z\}$ are in relation $\Req$.

We apply Lemma~\ref{lem:nice:P3} to three $P_3$s contained in the claw $(t;x,y,z)$.
Observe that if one of the first two outcomes happens for one of $P_3$s, 
say $\ReqA(x,t)$ and $\ReqB(y,t)$, then we have $\ReqA(z,t)$ by looking
at the $P_3$ on vertices $y,t,z$. Thus we obtain $\ReqA(x,z)$, a contradiction to the properties of $\hat{S}$ obtained from Theorem~\ref{thm:nice:nei}.
We infer that the only two possibilities are $\Rsub(x,t)$, $\Rsub(y,t)$, and $\Rsub(z,t)$, or the symmetrical option $\Rsup(x,t)$, $\Rsup(y,t)$, and $\Rsup(z,t)$.
By swapping the sides $A$ and $B$ if needed, we may assume that the first option happens, that is,
$N_A(x) \cup N_A(y) \cup N_A(z) \subseteq N_A(t)$ and $N_B(t) \subseteq N_B(x) \cap N_B(y) \cap N_B(z)$. 

Let $D = A \setminus N_A(t)$, $Q_{xy} = N_A(t) \setminus (N_A(x) \triangle N_A(y))$,
and similarly define $Q_{yz}$ and $Q_{xz}$.
Since $xy \notin E(G)$, by Theorem~\ref{thm:nice:nei} we have
$\Rinc(x,y)$ and there exists $p \in N_B(x) \setminus N_B(y)$.
Furthermore, observe that also $p \notin N_B(t)$.
We infer that the sets $Z = \{t,x,y,p\}$, $Q_{xy}$, and $D$ satisfy the assumptions of Claim~\ref{lem:h:grow}: for every $q \in N_A(x) \cap N_A(y)$
the graph $G[\{p,x,q,y\}]$ is a $P_4$ with $q$ being one of the middle vertices,
while for every $q \in N_A(t) \setminus N_A(x,y)$ the graph $G[\{t,q,x,p,y\}]$
is a $1$-hook with $q$ being its active vertex. 
Consequently, $\muA(\reach{Q_{xy}}{D}) \leq 4\degeps + \bheps$.
Symmetrically, the same conclusion holds for $Q_{yz}$ and $Q_{xz}$.

Note now that $Q_{xy} \cup Q_{yz} \cup Q_{xz} = N_A(t)$, as 
$(X \triangle Y) \cap (Y \triangle Z) \cap (Z \triangle X) = \emptyset$
for any three sets $X,Y,Z$. Consequently,
$\muA(\reach{N_A(t)}{D}) \leq 3(4\degeps + \bheps)$.
However, $\reach{N_A(t)}{D} = D$ by connectivity of $G[A]$,
and we have a contradiction with~\eqref{eq:grow:cover}.
This concludes the proof of the claim, and of Theorem~\ref{thm:tech-hooks}.
\end{proof}

\section{The strong \EH property is much stronger} 
\label{sec:negative}
%
%
%
%
In this section, we prove Theorem~\ref{er_theorem}. Both statements, $(a)$ and $(b)$, are implied by the following lemma. 

\begin{lemma}
\label{er_lemma}
Let $k>2$ be fixed and let $\mathcal{H}$ be a family of graphs such that  
\begin{itemize}
\item[$(P1)$] every $H \in \mathcal{H}$ contains a cycle of length at most $k$; or 
\item[$(P2)$] for every $H \in \mathcal{H}$,  the complement of $H$ contains a cycle of length at most $k$.
\end{itemize}
Then the class of $\mathcal{H}$-free graphs does not have the strong Erd\H{o}s-Hajnal property.
\end{lemma}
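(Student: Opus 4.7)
The plan is to disprove the strong Erd\H{o}s-Hajnal property by exhibiting, for every $\delta > 0$, an $\mathcal{H}$-free graph on which no homogeneous pair of relative size $\delta$ exists. The natural construction is a random graph $G = G(n,p)$ with carefully tuned edge probability, preprocessed by deleting a small number of vertices to destroy short cycles.

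Assume first that $(P1)$ holds. Choose $p = c n^{-1 + 1/k}$ with $c = c(\delta,k) > 0$ a sufficiently small constant, and let $G = G(n,p)$. The argument rests on two complementary estimates. First, for every $3 \leq \ell \leq k$, the expected number of cycles of length $\ell$ in $G$ is at most $\frac{n^\ell p^\ell}{2\ell} \leq \frac{c^\ell n^{\ell/k}}{2\ell}$; summing over $\ell$, the dominating $\ell = k$ term yields a bound of the form $C_k c^3 n$, and shrinking $c$ makes this at most $\delta n / 4$. Markov's inequality then gives, with probability at least $1/2$, at most $\delta n / 2$ cycles of length at most $k$. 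Second, for any fixed disjoint pair $(P,Q)$ with $|P|, |Q| \geq \delta n$, the probability that $(P,Q)$ is adjacent is $p^{|P||Q|} \leq e^{-\Omega(n^2 \log n)}$, and the probability that $(P,Q)$ is anti-adjacent is at most $e^{-p |P||Q|} \leq e^{-c \delta^2 n^{1 + 1/k}}$. A union bound over the at most $4^n$ choices of $(P,Q)$ shows that both events fail with high probability.

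Combining these, for $n$ large, $G$ simultaneously contains no homogeneous pair of size $\delta n$ and at most $\delta n / 2$ cycles of length at most $k$. I would then delete one vertex from each such cycle: the resulting induced subgraph $G'$ has at least $(1 - \delta/2) n$ vertices, girth exceeding $k$, and inherits the absence of a homogeneous pair of size $\delta n \geq \delta |V(G')|$. Property $(P1)$ then ensures that $G'$ is $\mathcal{H}$-free. For case $(P2)$, I would run the same construction to produce $G'$ and then take its complement $\overline{G'}$: a homogeneous pair in $\overline{G'}$ is a homogeneous pair of the opposite type in $G'$, so none of linear size exists; and any induced copy of $H \in \mathcal{H}$ in $\overline{G'}$ would yield an induced copy of $H^\compl$ in $G'$, which by $(P2)$ contains a cycle of length at most $k$, contradicting girth$(G') > k$.

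The only real subtlety is the calibration of $p$: it must be small enough that the expected number of short cycles is $O(\delta n)$, which demands $p^k n^k = O(n)$, and simultaneously large enough to rule out linear-sized anti-adjacent pairs, which requires $p \cdot n^2 \gg n$. The choice $p = \Theta_\delta(n^{-1 + 1/k})$ lies in the sweet spot for both conditions, and verifying that the two estimates can be made to coexist for every fixed $\delta$ is the main quantitative point of the proof.
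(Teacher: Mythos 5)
Your proposal is correct and follows essentially the same route as the paper: take $G(n,p)$, show by a first-moment bound that with positive probability there are few short cycles and no linear-sized homogeneous pair, delete a vertex from each short cycle, and handle $(P2)$ by complementation. The one substantive difference is the choice of $p$: the paper takes $p=\Theta_\delta(1/n)$, so the expected number of cycles of length at most $k$ is a \emph{constant} and only $O(1)$ vertices need to be deleted, whereas your $p=\Theta_\delta(n^{-1+1/k})$ forces you to delete a linear (though small) fraction of the vertices. Both calibrations work, but the paper's is cleaner, and your version exposes a small slip in the deletion step: you rule out homogeneous pairs of size $\geq\delta n$ in $G$, but after deleting up to $\delta n/2$ vertices you need to rule out pairs of size $\geq\delta n'$ in $G'$ with $n'=(1-\delta/2)n$, and $\delta n'<\delta n$, so the inequality you invoke points the wrong way. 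The fix is routine --- run the union bound for pairs of size $\delta n/2$ (or $\delta(1-\delta/2)n$) instead, which only improves the exponent by a constant factor and still beats the $4^n$ union bound --- and indeed the paper does exactly this by working with pairs of size $\lfloor\delta n/2\rfloor$ from the outset.
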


\begin{proof}
Assume first that $\cH$ is a family of graphs with Property $(P1)$, i.e.~every $H\in\cH$ 
contains a cycle of length at most $k$. 
For every $\delta >0$, we construct a graph $G_\delta$, say on $n$ vertices, 
that is $\cH$-free and that does not contain a homogeneous pair $(P,Q)$ with $|P|,|Q| \geq \delta n$. 


Fix $\delta > 0$, let $n$ be large enough, and let $G\sim G(n,p)$ 
be a random graph on $n$ vertices where every edge is present independently at random with 
probability
$$p=\frac{50}{\bheps^{2}n}.$$
Let $X^{k}$ be a random variable that counts the number of cycles of length at most $k$ in $G$, and 
for $3\leq \ell \leq k$, let $X_{\ell}$ be a random variable that counts the number of cycles of length $\ell$ in $G$. 
By linearity of expectation we have 
$$\Exp(X^{k}) = \sum_{\ell=3}^{k} \Exp(X_{\ell}) \leq \sum_{\ell=3}^{k} (pn)^\ell \leq k \left(\frac{50}{\delta^2}\right)^k=:C. $$
Therefore, by Markov's Inequality, 
\begin{equation}\label{aux1}
\Prob(X^{k} \geq 3C) \leq \frac{1}{3}.
\end{equation}
Let $Z_\bheps$ be a random variable that counts the number of homogeneous pairs 
$(P,Q)$ in $G$ with $|P|,|Q| = \left\lfloor\frac{\bheps}{2} n \right\rfloor$. 
Then 
$$\Exp(Z_\bheps) \leq 2^{n} \cdot 2^{n} \cdot (1-p)^{\bheps^{2}n^{2}/10} + 2^{n} \cdot 2^{n} \cdot p^{\bheps^{2}n^{2}/10},$$
where the first term is an upper bound on the expected number of anti-adjacent pairs $(P,Q)$ and the second term is an upper bound on the expected number of adjacent pairs $(P,Q)$. For $n$ large enough we have $p < \frac{1}{2}$, so that we can deduce 
$$\Exp(Z_{\bheps}) \leq 2^{2n+1}(1-p)^{\bheps^{2}n^{2}/10} \leq 2^{2n+1}e^{-p\bheps^{2}n^{2}/10},$$ 
where we use $1-x \leq e^{-x}$ in the last inequality. 
Therefore, by a standard first-moment argument and our choice of $p$, 
$$\Prob(Z_{\bheps} > 0) = \Prob(Z_{\bheps} \geq 1)\leq 
\Exp(Z_{\bheps}) \leq e^{(2n+1)\ln(2)-p\bheps^{2}n^{2}/10} \leq e^{-n}.$$
%
%
%
Therefore, with probability at most $\frac{1}{3}+o(1)$, $G$ satisfies $X^k\geq 3C$ or $Z_\delta >0$. 
That is, there exists a graph $G'$ that has at most $3C$ cycles of length at most $k$, and that 
has no homogeneous pair $(P,Q)$ with $|P|,|Q| = \left\lfloor\frac{\bheps}{2} n \right\rfloor$. 
Remove a vertex from every cycle of length at most $k$ to obtain a graph $G_\delta$ 
on $n'\geq n/2$ vertices with no homogeneous pair $(P,Q)$ with $|P|,|Q| \geq \bheps n'$. 
In particular, $G_\delta$ is $\mathcal{H}$-free, which proves the claim. 

Assume now that the family $\cH$ satisfies Property $(P2)$. 
Then the family $\cH^\compl:= \{H^\compl\colon H\in\cH\}$ satisfies Property $(P1)$. 
So, by the first part, for every $\delta>0$ we find a graph $G_\delta$, say on $n$ vertices, 
that is $\cH^\compl$-free and has no homogeneous pair $(P,Q)$ with $|P|, |Q|\geq \delta n$. 
But then, the collection of graphs $G_\delta^\compl$ shows that the family $\cH$ cannot 
have the strong \EH property either.  
\end{proof}

We are ready to prove Theorem~\ref{er_theorem}.

\begin{proof}[Proof of Theorem~\ref{er_theorem}]
First, observe that since $P_4^\compl = P_4$, the class of $P_4$-free graphs has the strong Erd\H{o}s-Hajnal property by the result of~\cite{blt2015}. 
To prove the implication in the other direction, notice that if $H$ is not an induced
subgraph of $P_{4}$ then either $H$ or $H^{\compl}$ contains a cycle. But then we can apply Lemma~\ref{er_lemma} to $\mathcal{H}=\{H\}$ and we are done.
Thus, we proved statement $(a)$.
Statement $(b)$ follows from Lemma~\ref{er_lemma} by taking $\mathcal{H}=\{H,H^{\compl}\}$. 
\end{proof}

\section{Conclusions}\label{sec:concl}
We proved in this paper
that for every $k \geq 1$, the class of $\dhookfam{k}$-free
graphs has the strong Erd\H{o}s-Hajnal property.
Specifically, there exists $\eps(k)>0$ such that every $\dhookfam{k}$-free $n$-vertex graph contains a clique or an independent set of size at lest $n^{\eps(H)}$.
This result extends, e.g., the result on forbidding long paths and antipaths \cite{blt2015}.

The only trees on six vertices that cannot be obtained through the {\em substitution method} described in \cite{aps2001} are the path $P_6$ and the 2-hook, also known as the $E$-graph. 
Therefore, Conjecture \ref{conj:EHLight} is now known to be true for every tree $H$ on at most six vertices.  

The question of excluding pairs of graphs in the context of the  Erd\H{o}s-Hajnal conjecture was considered also in the directed setting (see: \cite{ pairs}). 
The directed version of the conjecture is equivalent to the undirected one and was recently heavily investigated (\cite{choromanski, choromanski2, chorojeb, upper, 
pseudo}). 
In the directed setting the analogue of the complement of the graph is the graph obtained by reversing directions of all the edges.
It would be interesting to see whether techniques presented in this paper can be applied in the directed setting to get generalisations of some of the known results.

\bibliographystyle{abbrv}
\bibliography{erdos-hajnal}

\end{document}